\documentclass[11pt]{amsart}
\usepackage[a4paper]{geometry}
\usepackage{times}
\usepackage[english]{babel}
\usepackage[utf8]{inputenc}
\usepackage[pdftex]{color, graphicx}
\usepackage{amsmath,amsthm}
\usepackage{amssymb,amsfonts}
\usepackage[foot]{amsaddr}
\usepackage[hang,flushmargin]{footmisc} 
\usepackage{mathrsfs}
\usepackage{tikz}
\usepackage{graphicx}
\usepackage{enumerate}
\usepackage{xypic}
\usepackage{microtype}
\usepackage{hyperref}
\usepackage{cleveref}
\usepackage{amscd}
\usepackage{booktabs}
\usepackage{todonotes}
\setuptodonotes{inline, color=pink}

\usepackage{mathtools}
\DeclarePairedDelimiter{\set}{\{}{\}}
\DeclarePairedDelimiter{\abs}{\lvert}{\rvert}
\usepackage{centernot}

\usepackage[autostyle, italian=guillemets]{csquotes}

\newcommand{\IMP}{\Rightarrow}
\newcommand{\IFF}{\Leftrightarrow}

\newcommand{\tp}{\textrm{tp}}

\newcommand{\BD}{\mathrm{BD}}
\newcommand{\Z}{\mathbb{Z}}
\newcommand{\N}{\mathbb{N}}

\newcommand{\cc}{\mathsf{c}}
\newcommand{\tmid}{\mathrel{\tilde \mid}}
\newcommand{\smid}{\mathrel{\mid^\mathrm{s}}}
\newcommand{\nsmid}{\mathrel{\centernot\mid^\mathrm{s}}}
\newcommand{\sequiv}[1]{\mathrel{\equiv^\mathrm{s}_{#1}}}
\newcommand{\nsequiv}[1]{\mathrel{\not\equiv^\mathrm{s}_{#1}}}
\newcommand{\DF}{\mathrm{SD}}
\newcommand{\DL}{\mathrm{DL}}
\newcommand{\from}{\colon}
\newcommand{\inverse}{^{-1}}
\renewcommand{\phi}{\varphi}

\theoremstyle{plain}
\newtheorem{theorem}{Theorem}[section]
\newtheorem*{utheorem}{Main Theorem}
\newtheorem{lemma}[theorem]{Lemma}
\newtheorem{fact}[theorem]{Fact}
\newtheorem{corollary}[theorem]{Corollary}

\newtheorem{proposition}[theorem]{Proposition}

\theoremstyle{definition}
\newtheorem{definition}[theorem]{Definition}

\newtheorem{example}[theorem]{Example}
\newtheorem{problem}[theorem]{Problem}
\theoremstyle{remark}
\newtheorem{remark}[theorem]{Remark}

\usepackage{fancyhdr}
\usepackage{orcidlink}

\usepackage{url}
\makeatletter                        
\g@addto@macro{\UrlBreaks}{\UrlOrds} 
\makeatother                         
\title{Self-divisible ultrafilters and congruences in $\beta \mathbb Z$}

\author{Mauro Di Nasso$^\dagger$\,\orcidlink{0000-0001-6103-9775}}
\address{$^\dagger$ Dipartimento di Matematica, Universit\`a di Pisa, Largo Bruno Pontecorvo 5, 56127 Pisa, Italy}
\author{Lorenzo Luperi Baglini$^*$\,\orcidlink{0000-0002-0559-0770}}
\address{$^*$ Dipartimento di Matematica, Università  di Milano, Via Saldini 50, 20133 Milano, Italy}
\author{Rosario Mennuni$^\dagger$\,\orcidlink{0000-0003-2282-680X}}
\author{Moreno Pierobon$^\dagger$}
\author{Mariaclara Ragosta$^\dagger$\,\orcidlink{0009-0004-6641-4676}}

\keywords{congruence, divisibility, ultrafilters, tensor product, nonstandard integers, profinite integers}
\subjclass[2020]{Primary: 54D80, 11A07. Secondary, 03H15, 03C62, 20E18}
\begin{document}
\begin{abstract}
  We introduce \emph{self-divisible} ultrafilters, which we prove to be precisely those $w$ such that  the weak congruence relation $\equiv_w$ introduced by \v Sobot is an equivalence relation on $\beta \mathbb Z$. We provide several examples and additional characterisations; notably we show that $w$ is self-divisible if and only if $\equiv_w$ coincides with the strong congruence relation $\sequiv w$, if and only if  the quotient   $(\beta \mathbb Z,\oplus)/\mathord{\sequiv w}$ is a profinite group.
  We also construct an ultrafilter $w$ such that $\equiv_w$ fails to be symmetric, and describe the interaction between the aforementioned quotient and the profinite completion $\hat{\mathbb Z}$ of the integers.
\end{abstract}
\maketitle

In~\cite{sobot_congruence_2021}, \v Sobot investigated generalisations of the congruence relation $a\equiv_n b$ from $\mathbb Z$ to its  Stone--\v Cech compactification $\beta \mathbb Z$, equipped with the usual extensions $\oplus$, $\odot$ of the sum and product of integers. For each $w\in \beta \mathbb N$, he introduced a \emph{congruence} relation $\equiv_w$ and a \emph{strong congruence} relation $\sequiv w$. In this paper we investigate these notions and prove that, for some $w$, the former fails to be an equivalence relation, thereby answering~\cite[Question~7.1]{sobot_congruence_2021} in the negative. In fact, we fully characterise those $w$ for which this happens, and compute the relative quotient when it does not.

Almost by definition, $u\sequiv w v$ holds if and only if, whenever $(d,a,b)$ is an ordered triple of nonstandard integers which generate $w\otimes u\otimes v$, we have $d\mid a-b$. It was proven in~\cite{sobot_congruence_2021} that $\sequiv w$ is always an equivalence relation, and in fact a congruence with respect to $\oplus$ and $\odot$ but, perhaps counterintuitively for a notion of congruence, there are some ultrafilters $w$ for which $w\nsequiv w 0$. On the other hand, the relation $\equiv_w$ does always satisfy $w\equiv_w 0$ but, as we  said above, it may fail to be an equivalence relation. So, in a sense, these two relations have complementary drawbacks, and it is natural to ask for which ultrafilters $w$ these drawbacks disappear.
Our main result says that $\equiv_w$ is well-behaved if and only if $\sequiv w$ is, if and only if the two relations collapse onto each other. This is moreover equivalent to the quotient $(\beta \mathbb Z, \oplus)/\mathord{\sequiv w}$ being a profinite group, which in fact can be explicitly computed.
More precisely, if we denote by $\mathbb P$ the set of prime natural numbers and by $\mathbb Z_p$ the additive group of $p$-adic integers, our main results can be summarised as follows.
\begin{utheorem}[\Cref{main:thm,thm:charred}]
For every $w\in \beta \mathbb N$ the following are equivalent.
\begin{enumerate}
\item \label{point:mainthmintro1}We have $w\sequiv w0$.
\item The relation $\equiv_w$ is an equivalence relation.
\item \label{point:mainthmintro3}The relations $\equiv_w$ and $\sequiv w$ coincide.
\item The quotient $(\beta \mathbb Z, \oplus)/\mathord{\sequiv w}$ is isomorphic to $\prod_{p \in \mathbb P} G_{p,w}$, where 
\[G_{p,w}=\begin{cases}
\Z/ p^n \mathbb Z, & \text{if} \ n=\max\{k\in\mathbb{N}\cup\{0\}: p^k \mathbb Z\in w\} \text{ exists};\\
\Z_{p}, & \text{otherwise}.\end{cases}\]
\item The quotient $(\beta \mathbb Z, \oplus)/\mathord{\sequiv w}$, when equipped with the quotient topology, is a profinite group.
\end{enumerate}
\end{utheorem}
In fact, many more characterisations of those $w$  satisfying the  above equivalent conditions are possible (\Cref{thm:charred}).  We believe this  to be an indication that these ultrafilters, which we dub \emph{self-divisible}, are objects of interest, and we study them at length throughout the paper.

In more detail, after briefly recalling the context in \Cref{sec:prelim}, we prove in \Cref{sec:negative} that  $\equiv_w$ is an equivalence relation if and only if it is transitive and provide an example of an ultrafilter $w$ such that $\equiv_w$ is not symmetric.  \Cref{sec:main} is devoted to proving the equivalence of~\eqref{point:mainthmintro1} to~\eqref{point:mainthmintro3} in the Main Theorem. In \Cref{sec:eg} we provide examples of self-divisible ultrafilters, and study the topological properties of their space in \Cref{sec:topology}. In \Cref{sec:algebra} we  study the quotients $\beta \mathbb Z/\mathord{\sequiv w}$, and show that each of them may be identified with a quotient of the profinite completion $\hat{\mathbb Z}$ of the integers which embeds in the ultraproduct $\prod_w \mathbb Z/n \mathbb Z$. In the same section, we obtain several further equivalent definitions of self-divisibility, completing the proof of the Main Theorem. We conclude in \Cref{sec:craaop} with some final remarks, further directions, and an open problem.

\section{Preliminaries}\label{sec:prelim}
The letters $u,v,w,t$ will usually denote elements of $\beta\mathbb Z$,  while $p,q,r$ will typically stand for prime natural numbers. We identify each integer with the corresponding principal ultrafilter.
If $u\in \beta \mathbb Z$ we write $-u$ for $\set{-A: A\in u}$ and $u\ominus v$ for   $u\oplus (-v)$. We extend the usual conventions about usage of $+$, $-$ to $\oplus$, $\ominus$, e.g.\ whenever we write $-u\oplus v$ we mean $(-u)\oplus v$, and $u\ominus v\ominus w$ is to be parsed as $u\oplus (-v)\oplus (-w)$. If $A\subseteq \mathbb Z$, then $A^\cc$ denotes $\mathbb Z\setminus A$, and $\overline A$ denotes the closure of $A$ in $\beta \mathbb Z$, that is, $\set{u\in \beta \mathbb Z : A\in u}$. We convene that $0\notin \mathbb N$, and use $\omega$ for $\mathbb N\cup \set 0$.

We adopt some conventions and notations of model-theoretic flavour; some standard references are~\cite{hodges,poizat,tent-ziegler}. Namely, we work in a $\kappa$-saturated elementary extension ${}^{\ast}\mathbb Z$ of $\mathbb Z$, where the latter is equipped with a symbol for every subset of every cartesian power $\mathbb Z^k$, and where $\kappa$ is a large enough cardinal, for instance $\kappa=(2^{\aleph_0})^+$. The results obtained do not depend on the particular elementary extension chosen. Moreover, we write $a\models u$, or $u=\tp(a/\mathbb Z)$, and say that $a$ is a \emph{realisation} of $u$, or that $a$ \emph{generates} $u$, to mean that $u=\set{A\subseteq \mathbb Z: a\in{}^{\ast} A}$. In other words, we identify ultrafilters in $\beta \mathbb Z$ with $1$-types over $\mathbb Z$ in the language mentioned above.

In this setting, every type over $\mathbb Z$ is definable, and the product $\otimes$ of
ultrafilters coincides with the product $\otimes$ of definable types, provided compatible conventions are adopted. Specifically, we have $A\in u\otimes v\IFF\set{x: \set{y: (x,y)\in A}\in v}\in u$.  In terms of realisations, this means that the order in which we resolve tensor products is reversed with respect to the majority of model-theoretic literature; namely, in this paper  $(a,b)\models u\otimes v$ iff $a\models u$ and $b\models v\mid \mathbb Z a$.\footnote{Here $v\mid \mathbb Z a$ denotes the unique type over $\mathbb Z a$ extending $v$ and definable over $\mathbb Z$.} In this case, we call $(a,b)$ a \emph{tensor pair}. Tensor pairs in ${}^{\ast}\mathbb{N}$ have been characterised by Puritz in~\cite[Theorem~3.4]{PUR}; we recall here the extension of Puritz' characterisation to ${}^{\ast}\mathbb{Z}$, and we refer to~\cite[Section~11.5]{Di_Nasso_2015} or~\cite{LMON} for a proof of this fact.

\begin{fact}\label{fact:abstensorpair}
An ordered pair $(a,b)\in{}^{\ast}\Z^{2}$ is a tensor pair if and only if for every $f\from\Z\to\Z$ either ${}^{\ast}f(b)\in\Z$ or $\abs a\leq\abs{{}^{\ast}f(b)}$.  
\end{fact}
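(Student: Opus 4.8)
The plan is to unwind the definition of a tensor pair into a concrete membership criterion and then probe that criterion with carefully chosen functions $\mathbb{Z}\to\mathbb{Z}$. Write $v:=\tp(b/\mathbb{Z})$, which is definable over $\mathbb{Z}$. By definition $(a,b)$ is a tensor pair exactly when $b\models v\mid\mathbb{Z}a$, and spelling out what it means for $b$ to realise the unique $\mathbb{Z}$-definable extension of $v$ yields: $(a,b)$ is a tensor pair if and only if, for every $A\subseteq\mathbb{Z}^2$, one has $(a,b)\in{}^{\ast}A$ precisely when $a\in{}^{\ast}B_A$, where $B_A:=\set{z\in\mathbb{Z}:A_z\in v}$ and $A_z:=\set{y:(z,y)\in A}$. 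Both implications of the Fact will be extracted from this criterion.

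The left-to-right implication is short. Given a tensor pair $(a,b)$ and $f\colon\mathbb{Z}\to\mathbb{Z}$ with ${}^{\ast}f(b)\notin\mathbb{Z}$, I would assume $\abs a>\abs{{}^{\ast}f(b)}$ and apply the criterion to $A:=\set{(z,y)\in\mathbb{Z}^2:\abs{f(y)}<\abs z}$: then $(a,b)\in{}^{\ast}A$, whereas $B_A=\set{z\in\mathbb{Z}:\abs{{}^{\ast}f(b)}<\abs z}=\emptyset$ because ${}^{\ast}f(b)$ is nonstandard, so $a\notin{}^{\ast}B_A$, contradicting the criterion. Hence $\abs a\le\abs{{}^{\ast}f(b)}$.

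For the converse I would assume the stated condition on functions and recover the criterion. It suffices to show, for every $A$, that $(a,b)\in{}^{\ast}A$ implies $a\in{}^{\ast}B_A$: applying this to $\mathbb{Z}^2\setminus A$ (and using $B_{\mathbb{Z}^2\setminus A}=\mathbb{Z}\setminus B_A$) gives the reverse implication. Intersecting $A$ with $(\mathbb{Z}\setminus B_A)\times\mathbb{Z}$, this reduces to the claim that $(a,b)\in{}^{\ast}C$ forces $B_C\neq\emptyset$. To prove it, put $g(y):=\set{z\in\mathbb{Z}:(z,y)\in C}$, so $a\in{}^{\ast}g(b)$; since $a\in{}^{\ast}g(b)$, the set on which $g$ is empty is not in $v$, so we may assume $g(y)\neq\emptyset$ for all $y$. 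Let $m(y):=\min\set{\abs z:z\in g(y)}\in\omega$, so ${}^{\ast}m(b)=\min\set{\abs z:z\in{}^{\ast}g(b)}\le\abs a$. If ${}^{\ast}m(b)$ were nonstandard, the function $m'(y):=\max(m(y)-1,0)$ would satisfy ${}^{\ast}m'(b)={}^{\ast}m(b)-1$, which is nonstandard with $\abs{{}^{\ast}m'(b)}<\abs a$, violating the hypothesis for $m'$; so ${}^{\ast}m(b)$ is a standard natural, the minimum is attained by some $z_0\in{}^{\ast}g(b)$ with $\abs{z_0}={}^{\ast}m(b)\in\mathbb{Z}$, hence $z_0\in\mathbb{Z}$, $(z_0,b)\in{}^{\ast}C$, and $z_0\in B_C$.

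The main obstacle is this last claim, and inside it the step that excludes the borderline case. Probing the criterion with the position of the smallest-modulus fibre element only delivers ${}^{\ast}m(b)\le\abs a$, not strict inequality, and the tensor-pair property genuinely requires ruling out that $a$ is itself that minimum; this is exactly what the perturbed function $m'$, which shaves one off the modulus, accomplishes. The two combinatorial reductions that precede the claim, and the identification of tensor pairs with the membership criterion, I expect to be routine.
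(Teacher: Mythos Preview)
Your argument is correct. The paper does not supply its own proof of this Fact: it is quoted from Puritz and the references \cite{Di_Nasso_2015,LMON} are given for a proof, so there is nothing to compare against directly. Your unwinding of the tensor-pair condition into the membership criterion $(a,b)\in{}^{\ast}A\Leftrightarrow a\in{}^{\ast}B_A$ is exactly the right translation, and both directions go through as you wrote them. The only cosmetic point is the phrase ``we may assume $g(y)\neq\emptyset$ for all $y$'': rather than modifying $C$, it is cleaner simply to set $m(y)\coloneqq 0$ on $\{y:g(y)=\emptyset\}\notin v$, since $b$ avoids the star of that set and the rest of the argument is unaffected. The use of the shifted function $m'$ to rule out ${}^{\ast}m(b)=\abs a$ is the key trick, and it is handled correctly.
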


The iterated hyper-extensions framework of nonstandard analysis allows for an even simpler characterisation of tensor products and related notions: if $a,b\in{}^{\ast}\mathbb{Z}$ are such that $a\models u$ and  $b\models v$, then $\left(a,{}^{\ast}b\right)\models u\otimes v$. As a trivial consequence, in the same hypotheses we have that $a+{}^{\ast}b\models u\oplus v$ and $a\cdot {}^{\ast}b\models u\odot v$. A detailed study of many properties and characterisations of tensor $k$-uples in this iterated nonstandard context can be found in~\cite{LMON}.

Let us recall (some equivalent forms of) the definitions of divisibility and congruence of ultrafilters. We will frequently use that, when dealing with generators of ultrafilters, some existential quantifiers may be replaced by universal ones. For example, $(\exists a\models u)\; (\exists b\models v)\; a\mid b$ if and only if $(\forall a\models u)\; (\exists b\models v)\; a\mid b$, if and only if $(\forall b\models v)\; (\exists a\models u)\; a\mid b$. This follows from saturation of ${}^{\ast}\mathbb Z$, see~\cite[Corollary~5.13]{LMON}.  By this, and~\cite[Proposition~3.2 and Theorem~4.5]{sobot_congruence_2021}, we may take as definitions of $\tmid$ and $\equiv_w$ the ones below.\footnote{In \cite{sobot_congruence_2021} the relation $\equiv_w$ is only defined for $w\in \beta\mathbb N$. Clearly there is no harm in using the same definition for $w\in \beta\mathbb Z\setminus \set 0$, but at any rate it is immediate that $\equiv_w$ coincides with $\equiv_{-w}$, and similarly for $\sequiv w$.} Similarly, our definition of $\sequiv w$ is not the original one, but it is equivalent to it by \cite[Lemma~6.5]{sobot_congruence_2021}.

\begin{definition}\label{defin:midequiv}
Let $u,v,w\in \beta \mathbb Z$, with $w\ne 0$.
  \begin{enumerate}
  \item   We write $u\tmid v$ iff there are $a\models u$ and $b\models v$ such that $a\mid b$.
  \item We write $u\equiv_w v$ iff there are $d\models w$ and $(a,b)\models u\otimes v$ such that $d\mid a-b$.
  \item   We write $u\smid v$ iff there is  $(a,b)\models u\otimes v$ such that $a\mid b$.
      \item   We write $u\sequiv w v$ iff there is  $(d,a,b)\models w\otimes u\otimes v$ such that $d\mid a-b$.
  \end{enumerate}
\end{definition}
We stress that  the existential quantifier in the definition of $\smid$ may be replaced with a universal one: the property being checked is true of \emph{some} realisation of the tensor product if and only if it is  true of \emph{every} realisation of the tensor product. The same holds for $\sequiv w$, but not for $\tmid$, nor for $\equiv_{w}$: in the latter two cases, we can replace one (any) existential quantifier with an universal one, provided the universal quantifier is the leftmost one, as above, but not both simultaneously.

\begin{remark}\label{rem:divcongbasics}The following properties hold.
  \begin{enumerate}
  \item The relation $\tmid$ is a preorder.
  \item The relation $\smid$ is transitive, but not reflexive (see later, or~\cite[Lemma~6.4]{sobot_congruence_2021}).
  \item We have $u\equiv_w v$ if and only if  $w\tmid u\ominus v$.
  \item  We have $u\sequiv w v$ if and only if  $w\smid u\ominus v$.
  \item If $w=n\ne 0$ is principal, then both $\equiv_w$ and $\sequiv w$ coincide with the usual congruence relation modulo $n$.
\end{enumerate}
\end{remark}
From \Cref{defin:midequiv}, it is easy to obtain nonstandard characterisations; for example,  $u\sequiv w v$ if and only if whenever $d,a,b\in{}^{\ast}\mathbb{Z}$ are such that $d\models w, a\models u, b\models v$, then $d\mid  {}^{\ast}a-{}^{\ast\ast}b$. Below, we provide some further equivalent definitions of the divisibility relations.\footnote{For $\tmid$, this is in fact the original definition.} Denote by   $\mathcal U$ the family of all $\mid$-upward closed subsets of $\mathbb Z$.
\begin{remark}\label{rem:divchar}
For every $u,v\in \beta \mathbb Z$, the following hold.
\begin{enumerate}
\item We have  $u\tmid v$ if and only if $u\cap \mathcal U\subseteq v$.
\item We have  $u\smid v$ if and only if $\set{n\in \mathbb Z: n \mathbb Z\in v}\in u$.
\end{enumerate}
\end{remark}

\begin{fact}[{\cite[Lemma~5.6 and Theorem~5.7]{sobot_congruence_2021}}]\label{fact:sequiveqrel}
  For every $w\in \beta \mathbb Z \setminus \set 0$, the relation $\sequiv w$ is an equivalence relation compatible with $\oplus$ and $\odot$.
\end{fact}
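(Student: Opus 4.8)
The plan is to recast $\sequiv w$ entirely in terms of the finite residues of ultrafilters, after which every required property follows from $w$ being an ultrafilter. For $m\geq 1$ and $t\in\beta\mathbb Z$ let $c_m(t)$ denote the unique element of $\mathbb Z/m\mathbb Z$ with $c_m(t)+m\mathbb Z\in t$; this is well defined since the classes $i+m\mathbb Z$, $0\le i<m$, partition $\mathbb Z$, so $t$ contains exactly one of them. By \Cref{rem:divcongbasics} we have $u\sequiv w v$ iff $w\smid u\ominus v$, and by \Cref{rem:divchar} the latter means $\{n\in\mathbb Z:n\mathbb Z\in u\ominus v\}\in w$. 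The first step is to identify this set: since $n\mathbb Z=|n|\mathbb Z$, and using $B\in-v\iff-B\in v$ together with the (reversed) tensor formula $A\in u\oplus(-v)\iff\{x:\{y:x+y\in A\}\in-v\}\in u$, one checks that for $n\neq0$ the class $\{y:x+y\in n\mathbb Z\}=-x+|n|\mathbb Z$ lies in $-v$ exactly when $x+|n|\mathbb Z\in v$, i.e.\ when $x\equiv c_{|n|}(v)\pmod{|n|}$; hence the outer set is the class of $c_{|n|}(v)$ modulo $|n|$, which lies in $u$ iff $c_{|n|}(u)=c_{|n|}(v)$. As $w\ne0$ forces $\{0\}\notin w$, so that $0$ belongs to no member of $w$, we obtain
\[u\sequiv w v\iff E(u,v)\in w,\qquad\text{where}\quad E(u,v):=\{n\in\mathbb Z\setminus\{0\}:c_{|n|}(u)=c_{|n|}(v)\}.\]

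Granting this, the equivalence-relation axioms become trivial: $E(u,u)=\mathbb Z\setminus\{0\}\in w$, so $\sequiv w$ is reflexive; $E(u,v)=E(v,u)$ because the condition $c_{|n|}(u)=c_{|n|}(v)$ is symmetric, so $\sequiv w$ is symmetric; and $E(u,v)\cap E(v,t)\subseteq E(u,t)$ by transitivity of equality modulo each $m$, so if $E(u,v),E(v,t)\in w$ then $E(u,t)\in w$, giving transitivity. The only features of $w$ used are closure under finite intersection and under supersets.

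For compatibility with $\oplus$ and $\odot$, the extra ingredient is that each $c_m$ is additive and multiplicative: $c_m(u\oplus v)=c_m(u)+c_m(v)$ and $c_m(u\odot v)=c_m(u)\,c_m(v)$ in $\mathbb Z/m\mathbb Z$. This is clearest via realisations: choosing $a\models u$ and $b\models v$ we have $a+{}^{\ast}b\models u\oplus v$ and $a\cdot{}^{\ast}b\models u\odot v$, while $a\equiv c_m(u)\pmod m$ and, since the property $b\equiv c_m(v)\pmod m$ is elementary and $m$ is standard, also ${}^{\ast}b\equiv c_m(v)\pmod m$; hence $a+{}^{\ast}b\equiv c_m(u)+c_m(v)$ and $a\cdot{}^{\ast}b\equiv c_m(u)c_m(v)\pmod m$. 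Consequently, if $u_i\sequiv w v_i$ for $i=1,2$, then every $n\in E(u_1,v_1)\cap E(u_2,v_2)$ satisfies $c_{|n|}(u_1\oplus u_2)=c_{|n|}(u_1)+c_{|n|}(u_2)=c_{|n|}(v_1)+c_{|n|}(v_2)=c_{|n|}(v_1\oplus v_2)$, so this intersection---which lies in $w$---is contained in $E(u_1\oplus u_2,v_1\oplus v_2)$, whence $u_1\oplus u_2\sequiv w v_1\oplus v_2$; the argument for $\odot$ is identical.

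The closest thing to an obstacle is getting the paper's reversed tensor convention, and the definition of $-v$, right in the identification of $E(u,v)$; once the invariants $c_m$ are isolated, everything else is routine, the only other slightly delicate point being the harmless status of $0$. As a cross-check, transitivity also admits a direct proof bypassing $E$: by the remark following \Cref{defin:midequiv} one may replace the existential quantifier defining $\sequiv w$ with a universal one, so taking a single realisation $(d,a,b,c)\models w\otimes u\otimes v\otimes t$ and observing, by restricting types, that $(d,a,b)$, $(d,b,c)$ and $(d,a,c)$ realise $w\otimes u\otimes v$, $w\otimes v\otimes t$ and $w\otimes u\otimes t$ respectively, we get $d\mid a-b$ and $d\mid b-c$, hence $d\mid a-c$, hence $u\sequiv w t$. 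What this route does not hand us for free is reflexivity, which still needs $d\mid a-b$ for $(d,a,b)\models w\otimes u\otimes u$ with $a\ne b$ in general---precisely the statement $E(u,u)=\mathbb Z\setminus\{0\}\in w$ once more.
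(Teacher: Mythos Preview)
The paper does not prove this statement; it is recorded as a \emph{Fact} and attributed to \v Sobot, so there is no proof here to compare against. Your argument is correct and self-contained, and its central identity $u\sequiv w v\iff E(u,v)\in w$ is essentially the pointwise version of what the paper later packages via the map $\pi\colon\beta\mathbb Z\to\hat{\mathbb Z}$ (compare \Cref{lemma:sdoop} and \Cref{rem:vvprime}): your $c_m$ are the components of $\pi$, and $E(u,v)\in w$ rephrases $D(u\ominus v)\in w$.

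One wording slip to fix: the clause ``$0$ belongs to no member of $w$'' is false as written---for instance $\mathbb Z\in w$ and $0\in\mathbb Z$. What you actually need, and what does follow from $\{0\}\notin w$, is that $\mathbb Z\setminus\{0\}\in w$, so that adding or removing the point $0$ from a set does not affect its membership in $w$; this is precisely what lets you replace $\{n\in\mathbb Z:n\mathbb Z\in u\ominus v\}$ by $E(u,v)$. With that minor correction the argument is complete.
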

An important role in our analysis of these notions will be played by those ultrafilters which are maximal with respect to divisibility amongst nonzero ultrafilters.
\begin{definition}
We denote by $\mathrm{MAX}$ the set of ultrafilters that are $\tmid$-divisible by all elements of $\beta\Z\setminus\{0\}$.
\end{definition}

The following characterisation may be proven by taking suitable tensor products. See also \cite[Lemma~5.8(a)]{sobot_congruence_2021}.

\begin{fact}\label{fact:maxsdiv}
The following are equivalent for $w\in \beta \mathbb Z \setminus \set 0$.
\begin{enumerate}
\item For every $u\in \beta \mathbb Z \setminus \set 0$ we have $u\tmid w$ (that is, $w\in \mathrm{MAX}$).
\item\label{point:maxaresmax} For every $u\in \beta \mathbb Z \setminus \set 0$ we have $u\smid w$.
\item For every $n\in \mathbb N$ we have $n\tmid w$ (that is, $w\equiv_n 0$, or equivalently $n \mathbb Z\in w$).
\end{enumerate}
\end{fact}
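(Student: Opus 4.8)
The plan is to run the cycle (1)$\Rightarrow$(3)$\Rightarrow$(2)$\Rightarrow$(1), with the substance concentrated in (3)$\Rightarrow$(2). For (1)$\Rightarrow$(3) I would instantiate (1) at the principal ultrafilters $u=n$, $n\in\mathbb N$: since $n\neq 0$, condition (1) gives $n\tmid w$, which is exactly (3); the bracketed reformulations ``$w\equiv_n 0$'' and ``$n\mathbb Z\in w$'' follow from \Cref{rem:divcongbasics}(3) and \Cref{rem:divchar}(1) respectively, the latter because $n\mathbb Z$ is the least $\mid$-upward-closed subset of $\mathbb Z$ containing $n$. For (2)$\Rightarrow$(1), note that $\smid$ refines $\tmid$: if $u\smid w$ then by \Cref{defin:midequiv} some tensor pair $(a,b)\models u\otimes w$ has $a\mid b$, and as the coordinates of a tensor pair realise $u$ and $w$, this already witnesses $u\tmid w$. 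Applying this for every $u\in\beta\mathbb Z\setminus\set 0$ yields $w\in\mathrm{MAX}$.

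The heart of the matter is (3)$\Rightarrow$(2), and this is where ``suitable tensor products'' come in. First I would compute $S_w:=\set{n\in\mathbb Z:n\mathbb Z\in w}$. Hypothesis (3) puts all of $\mathbb N$ into $S_w$; the convention $0\notin\mathbb N$ costs nothing since $\set 0=0\mathbb Z\notin w$ (as $w\neq 0$), so $0\notin S_w$; and $(-n)\mathbb Z=n\mathbb Z$ supplies the negative integers. Hence $S_w=\mathbb Z\setminus\set 0$. Now take any $u\in\beta\mathbb Z\setminus\set 0$ and any $a\models u$, so $a\neq 0$, i.e.\ $a\in{}^{\ast}S_w$; unwinding the definition of the definable extension $w\mid\mathbb Z a$, this means precisely that the formula $a\mid x$ belongs to $w\mid\mathbb Z a$ (the $\mathbb Z$-definition of the relation ``$y\mid x$'' according to $w$ being the formula defining $S_w$). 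Thus any $b\models w\mid\mathbb Z a$ produces a tensor pair $(a,b)\models u\otimes w$ with $a\mid b$, i.e.\ $u\smid w$. One can also bypass this type-theoretic step by invoking \Cref{rem:divchar}(2), which reads $u\smid w\iff S_w\in u$, together with $\mathbb Z\setminus\set 0\in u$ (valid since $u\neq 0$).

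I do not anticipate a genuine obstacle; the only delicate point — and the only place (3) is really used — is the computation of $S_w$: one must observe that although (3) mentions only positive moduli, the conventions in force ($0\notin\mathbb N$ and $w\neq 0$) already pin $S_w$ down to all of $\mathbb Z\setminus\set 0$, so that $a\mid x\in w\mid\mathbb Z a$ holds for \emph{every} nonzero $a$, not merely for standard integers. As an aside, (3)$\Rightarrow$(1) can also be deduced directly from \Cref{rem:divchar}(1): given $A\in u\cap\mathcal U$ with $u\neq 0$, pick $m\in A\setminus\set 0$ (possible since $\set 0\notin u$); upward-closedness gives $m\mathbb Z\subseteq A$, and $m\mathbb Z=\abs m\mathbb Z\in w$ by (3), whence $A\in w$.
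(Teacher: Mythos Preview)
Your proof is correct and is precisely a fleshing-out of the paper's one-line hint ``may be proven by taking suitable tensor products''; the paper gives no further details beyond a reference to \cite{sobot_congruence_2021}. Your core step (3)$\Rightarrow$(2) via the computation $D(w)=\mathbb Z\setminus\set 0$ together with \Cref{rem:divchar}(2) is exactly the intended argument, and your alternative phrasing through the definable extension $w\mid\mathbb Z a$ is the same thing unpacked.
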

In the case of $\beta \mathbb N$, the following is~\cite[Lemma~4.3]{sobot_divisibility_2022}. Its version for $\beta \mathbb Z$ is proven in the same way. Recall that $K(\beta \mathbb Z, \odot)$ denotes the smallest bilateral ideal of the semigroup $(\beta \mathbb Z, \odot)$, and $\overline{K(\beta \mathbb Z, \odot)}$ denotes its closure.
\begin{fact}\label{max:ideal}
The set $\mathrm{MAX}$ is topologically closed in $\beta\Z$. Moreover, it is a $\odot$-bilateral ideal and it is closed under $\oplus$. In particular, $\overline{K(\beta\Z, \odot)}\subseteq\mathrm{MAX}$.
\end{fact}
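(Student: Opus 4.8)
The plan is to reduce the whole statement to the description $\mathrm{MAX}=\bigcap_{n\in\mathbb N}\overline{n\mathbb Z}$ supplied by \Cref{fact:maxsdiv}. Granting this, topological closedness is immediate, since each $\overline{n\mathbb Z}$ is clopen --- its complement in $\beta\mathbb Z$ being $\overline{(n\mathbb Z)^{\cc}}$ --- so that $\mathrm{MAX}$ is an intersection of closed sets. The set is moreover nonempty: the family $\set{\overline{n\mathbb Z}:n\in\mathbb N}$ has the finite intersection property, because $\bigcap_{n\le N}\overline{n\mathbb Z}=\overline{N!\,\mathbb Z}$, and $\beta\mathbb Z$ is compact. (This last remark is needed only so that calling $\mathrm{MAX}$ a ``bilateral ideal'' is not vacuous.)

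For the algebraic assertions I would establish the stronger fact that \emph{each} $\overline{n\mathbb Z}$ is a $\odot$-bilateral ideal and is closed under $\oplus$, and then intersect over $n$. Each of the three required inclusions is an immediate unwinding of the definitions of $\oplus$ and $\odot$ as push-forwards of $\otimes$ along $+$ and $\cdot$, using that $n\mathbb Z$ is at once a subgroup of $(\mathbb Z,+)$ and an ideal of the ring $\mathbb Z$: for $u,v\in\overline{n\mathbb Z}$ one has $n\mathbb Z\in u\oplus v$ because the set $\set{x:(n\mathbb Z)-x\in v}$ contains $n\mathbb Z$ (if $x\in n\mathbb Z$ then $(n\mathbb Z)-x=n\mathbb Z\in v$) and hence lies in $u$; for arbitrary $u\in\beta\mathbb Z$ and $v\in\overline{n\mathbb Z}$ one has $n\mathbb Z\in u\odot v$ because $\set{y:xy\in n\mathbb Z}\supseteq n\mathbb Z$ for every $x$, so that $\set{x:\set{y:xy\in n\mathbb Z}\in v}=\mathbb Z\in u$; and $n\mathbb Z\in v\odot u$ because $x\in n\mathbb Z$ forces $\set{y:xy\in n\mathbb Z}=\mathbb Z\in u$, whence $\set{x:\set{y:xy\in n\mathbb Z}\in u}\supseteq n\mathbb Z\in v$. (Equivalently, via realisations: every generator $c$ of an ultrafilter in $\overline{n\mathbb Z}$, and likewise its hyper-extension ${}^{\ast}c$, is divisible by $n$; divisibility by $n$ of a single factor is preserved under $\odot$, and divisibility by $n$ of both summands is preserved under $\oplus$; recall that generators of $u\odot v$ and $u\oplus v$ are obtained as $a\cdot{}^{\ast}b$ and $a+{}^{\ast}b$ for $a\models u$, $b\models v$.) Intersecting over $n\in\mathbb N$ then gives that $\mathrm{MAX}$ is a $\odot$-bilateral ideal closed under $\oplus$.

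The final clause is then purely formal: by general semigroup theory $K(\beta\mathbb Z,\odot)$ is contained in every bilateral ideal of $(\beta\mathbb Z,\odot)$, in particular in $\mathrm{MAX}$; and since $\mathrm{MAX}$ is closed, $\overline{K(\beta\mathbb Z,\odot)}\subseteq\mathrm{MAX}$ follows. I do not anticipate any genuine obstacle here: the mathematical content is entirely packaged in \Cref{fact:maxsdiv}, and what remains is bookkeeping. The only place where a little care is warranted is in keeping the (reversed) tensor-product convention of this paper straight when expanding $\oplus$ and $\odot$, equivalently in remembering to pass to the iterated hyper-extension when arguing with realisations.
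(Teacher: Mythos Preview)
Your proof is correct. The paper does not actually prove this fact: it merely cites \cite[Lemma~4.3]{sobot_divisibility_2022} for the $\beta\mathbb N$ case and remarks that the $\beta\mathbb Z$ version is proven in the same way. Your argument via the identification $\mathrm{MAX}=\bigcap_{n\in\mathbb N}\overline{n\mathbb Z}$ from \Cref{fact:maxsdiv}, followed by checking that each $\overline{n\mathbb Z}$ is a $\odot$-bilateral ideal and an $\oplus$-subsemigroup, is the natural route (and presumably the one in the cited reference); the unwinding of $\oplus$ and $\odot$ is handled correctly under the paper's tensor convention, and the final clause follows as you say from minimality of $K(\beta\mathbb Z,\odot)$ together with closedness of $\mathrm{MAX}$.
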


Throughout, an important role will be played by the \emph{profinite completion} $\varprojlim \mathbb Z/n\mathbb Z$ of $(\mathbb Z, +)$, which may be thought of as the additive group of consistent choices of remainder classes modulo each $n\in \mathbb N$, and is usually denoted by $\hat{\mathbb Z}$. Explicitly, we may identify an element of $\hat{\mathbb Z}$ with a sequence $(a_n)_{n\in \mathbb N}$ such that  $a_n\in\set{0,\ldots, n-1}$ and, if $n\mid m$, then $a_m\equiv_n a_n$, with pointwise addition modulo $n$.  There is an isomorphism  $(\hat {\mathbb Z},+)\cong\prod_{p\in \mathbb P} (\mathbb Z_p,+)$, where $\mathbb Z_p$ denotes the $p$-adic integers. Again up to isomorphism, we may view $\hat{\mathbb Z}$ as the quotient of $({}^{\ast} \mathbb Z,+)$ by the equivalence relation that identifies $a$ and $b$ whenever, for every $n\in \mathbb N$, we have $a\equiv_n b$.

It is well-known that $\hat {\mathbb Z}$ is a \emph{profinite group}, that is, a topological group which is a Stone space, when equipped with the group topology where a basis of neighbourhoods of the identity is given by the clopen subgroups $n \hat{\mathbb Z}$. In other words, the  basic (cl)open sets are given by fixing finitely many remainder classes. The isomorphism $\hat{\mathbb Z}\cong \prod_{p\in \mathbb P} \mathbb Z_p$ is in fact an isomorphism of topological groups, that can be used to obtain a nice characterisation of the closed subgroups of $(\hat {\mathbb Z}, +)$. This may be proven directly, but it also follows from e.g.~Theorem~1.2.5 in \cite{wilson}, to which we also refer the reader interested in an introduction to profinite groups. Below, we adopt the convention that, if $\alpha$ is an infinite ordinal, then  $p^\alpha \mathbb Z_p=\set 0$.

\begin{fact}\label{fact:sbgrpsofzhat}
  View $\hat{\mathbb Z}$ as $\prod_{p\in \mathbb P} \mathbb Z_p$. Then, 
  the closed subgroups of $(\hat {\mathbb Z}, +)$ are precisely those of the form $\prod_{p\in \mathbb P} p^{\phi(p)}\mathbb Z_p$, where $\phi\from \mathbb P\to \omega+1$.

In particular, each closed subgroup may be written as $\set{x\in \hat {\mathbb Z}: \forall n\in D\; n \mid x}$, where $D$ is a $\mid$-downward-closed subset of $\mathbb Z$ of the form $\bigcap_{p\in \mathbb P}\left(p^{\phi(p)+1}\mathbb Z\right)^\cc$.
\end{fact}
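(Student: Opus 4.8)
The plan is to reduce everything to the ring isomorphism $\hat{\mathbb Z}\cong\prod_{p\in\mathbb P}\mathbb Z_p$ and argue in three steps: (i)~every closed subgroup of the product decomposes as a product of closed subgroups of the factors; (ii)~the closed subgroups of a single $\mathbb Z_p$ are exactly the $p^n\mathbb Z_p$ for $n\in\omega$, together with $\set 0=p^\omega\mathbb Z_p$; (iii)~the resulting description translates into the claimed divisibility form.

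For step~(i), identify $\hat{\mathbb Z}$ with $\prod_{p\in\mathbb P}\mathbb Z_p$ as a topological \emph{ring} and, for each $p$, let $\varepsilon_p\in\hat{\mathbb Z}$ be the idempotent which is $1$ in the $p$-th coordinate and $0$ elsewhere, so that $\varepsilon_p\hat{\mathbb Z}\cong\mathbb Z_p$. Since $\mathbb Z$ is dense in the compact metrisable group $\hat{\mathbb Z}$, choose integers $n_k\to\varepsilon_p$. If $H\leq\hat{\mathbb Z}$ is a closed subgroup and $h\in H$, then $n_k h\in H$ and $n_k h\to\varepsilon_p h$ by continuity of multiplication, hence $\varepsilon_p h\in H$; that is, $H$ is stable under each coordinate projection $x\mapsto\varepsilon_p x$. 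Writing $H_p:=\varepsilon_p H=H\cap\varepsilon_p\hat{\mathbb Z}$ (the two sets coincide because $\varepsilon_p$ is idempotent), one checks that each $h\in H$ is the limit of its finite truncations $\sum_{p\in S}\varepsilon_p h\in H$, whence $H=\overline{\bigoplus_{p\in\mathbb P}H_p}=\prod_{p\in\mathbb P}H_p$ is a product of closed subgroups $H_p\leq\mathbb Z_p$; conversely every such product is visibly a closed subgroup. This stability step is the only point that is more than bookkeeping, and I expect it to be the main obstacle: it is where one genuinely uses that the factors $\mathbb Z_p$ are pairwise coprime. One could alternatively invoke Goursat's lemma on the finite partial products $\prod_{p\in S}\mathbb Z_p$, there being no nontrivial homomorphism between a finite $p$-group and a finite $q$-group for $p\neq q$.

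For step~(ii), let $K\leq\mathbb Z_p$ be a nonzero closed subgroup and set $n:=\min\set{v_p(x):x\in K,\ x\neq 0}\in\omega$, where $v_p$ is the $p$-adic valuation; pick $x_0\in K$ with $v_p(x_0)=n$. Since $\mathbb Z$ is dense in $\mathbb Z_p$, we get $\overline{\mathbb Z x_0}=x_0\mathbb Z_p=p^n\mathbb Z_p\subseteq K$, while the reverse inclusion is immediate from the minimality of $n$, so $K=p^n\mathbb Z_p$. With the stated convention for infinite exponents, the closed subgroups of $\mathbb Z_p$ are therefore exactly the $p^{\phi(p)}\mathbb Z_p$ with $\phi(p)\in\omega+1$, and combining with step~(i) yields the first assertion.

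For step~(iii), given $\phi\from\mathbb P\to\omega+1$ put $D:=\bigcap_{p\in\mathbb P}(p^{\phi(p)+1}\mathbb Z)^\cc$; since $v_p$ is monotone under divisibility this is a $\mid$-downward-closed set of integers, not containing $0$, consisting of those $n$ with $v_p(n)\leq\phi(p)$ for every $p$. Unwinding divisibility in $\hat{\mathbb Z}$, where $n\mid x$ means $v_p(x_p)\geq v_p(n)$ for every $p$, one sees that $x\in\hat{\mathbb Z}$ satisfies $n\mid x$ for all $n\in D$ if and only if $v_p(x_p)\geq\phi(p)$ for every $p$ --- for the nontrivial direction one feeds in $n=p^{\phi(p)}$ when $\phi(p)$ is finite, and $n=p^k$ for all $k\in\mathbb N$ when $\phi(p)=\omega$ --- that is, if and only if $x\in\prod_{p\in\mathbb P}p^{\phi(p)}\mathbb Z_p$. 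This gives the final assertion and completes the proof.
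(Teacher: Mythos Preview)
Your argument is correct. The paper, however, does not actually prove this fact: it simply remarks that ``this may be proven directly, but it also follows from e.g.\ Theorem~1.2.5 in~\cite{wilson}'' and leaves it at that. So the comparison is not between two proofs but between a citation and a self-contained argument.

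Your approach --- projecting onto each $\mathbb Z_p$ via the idempotents $\varepsilon_p$, approximated by integers to stay inside the additive subgroup $H$ --- is a clean and elementary way to get the product decomposition without invoking general structure theory of profinite groups. The cited result in Wilson covers the same ground in greater generality (closed subgroups of arbitrary profinite groups as inverse limits of their finite images), from which the explicit form here is then read off; your route bypasses that machinery by exploiting the specific ring structure of $\hat{\mathbb Z}$ and the fact that $\mathbb Z$ is dense. Step~(iii), the translation to the divisibility description via $D$, is not in Wilson at all and is genuinely part of what the paper is asserting, so it is good that you spelled it out.
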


\section{Congruences that are not equivalences}\label{sec:negative}

We begin this section by proving that $\equiv_w$ is not always an equivalence relation, thereby answering negatively a question of \v Sobot.
\begin{example}\label{ex1}
  Let $w\in \beta \mathbb Z\setminus \mathbb Z$ be such that, for every $n\in \mathbb N$, we have $w\equiv_n 1$. Then $\equiv_w$ is not transitive.
\end{example}
\begin{proof}
 For every $w$ we have  $w\tmid (-w)$, hence $0\equiv_w w$. On the other hand, for every $w$ as above,  $w\ominus 1\in \mathrm{MAX}$, so $w\equiv_w 1$, and by transitivity $0\equiv_w 1$, contradicting that $w$ is non-principal.
\end{proof}
Since any $w$ of the form $u\oplus 1$, with $u\in \mathrm{MAX}$ nonzero, satisfies the assumptions of \Cref{ex1}, this settles \cite[Question~7.1]{sobot_congruence_2021}. In the rest of this section, we study in more detail \emph{how} $\equiv_w$ may fail to be an equivalence relation. We easily observe that reflexivity is always guaranteed:
\begin{proposition}\label{fact:equivrefl}
For all $w\in \beta \mathbb Z\setminus\set 0$, the relation  $\equiv_w$ is reflexive.
\end{proposition}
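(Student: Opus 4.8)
The plan is to reduce reflexivity of $\equiv_w$ to a divisibility statement about the ultrafilter $u\ominus u$. By \Cref{rem:divcongbasics}\,(3), $u\equiv_w u$ holds if and only if $w\tmid u\ominus u$, so it suffices to show that every nonzero $w$ $\tmid$-divides $u\ominus u$, for every $u\in\beta\mathbb Z$.

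First I would check that $u\ominus u$ is either $0$ or an element of $\mathrm{MAX}$. To do so, fix a tensor pair $(a,b)\models u\otimes u$, so that $a-b\models u\ominus u$; note that both $a$ and $b$ realise $u$ over $\mathbb Z$. Then for each $n\in\mathbb N$ the elements $a$ and $b$ lie in the same residue class modulo $n$ — namely the unique coset of $n\mathbb Z$ belonging to $u$ — whence $n\mid a-b$, i.e.\ $n\mathbb Z\in u\ominus u$. If $u\ominus u\neq 0$, then \Cref{fact:maxsdiv} (the implication from its third condition to its first) yields $u\ominus u\in\mathrm{MAX}$.

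To conclude: if $u\ominus u=0$, then $w\tmid 0$ trivially, since $d\mid 0$ for every $d\models w$; and if $u\ominus u\in\mathrm{MAX}$, then by definition of $\mathrm{MAX}$ every nonzero ultrafilter — in particular $w$ — $\tmid$-divides $u\ominus u$. Either way $w\tmid u\ominus u$, so $u\equiv_w u$.

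I do not expect a genuine obstacle here; the only subtlety worth flagging is that $u\ominus u$ need not equal $0$ when $u$ is non-principal (indeed it is then non-principal as well), which is precisely why one routes the argument through $\mathrm{MAX}$ rather than simply invoking ``$d\mid 0$''.
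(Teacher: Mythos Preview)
Your proof is correct and follows essentially the same route as the paper's: show that $n\mathbb Z\in u\ominus u$ for every $n\in\mathbb N$ via a tensor pair, invoke \Cref{fact:maxsdiv} to get $u\ominus u\in\mathrm{MAX}$, and conclude using \Cref{rem:divcongbasics}. The only difference is cosmetic: the paper does not split off the case $u\ominus u=0$, since by the very definition of $\mathrm{MAX}$ the principal ultrafilter $0$ already belongs to it (every $d\models w$ satisfies $d\mid 0$), so the case distinction is harmless but unnecessary.
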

\begin{proof}
Given any $u\in\beta\Z$, observe that $(a, a')\models u\otimes u$ implies that, for every $n\in\N$, we have $a\equiv_n a'$, hence that $n\Z\in u\ominus u$. By \Cref{fact:maxsdiv} $u\ominus u\in\mathrm{MAX}$, and we conclude by \Cref{rem:divcongbasics}.
\end{proof}

As observed in Example \ref{ex1}, transitivity of $\equiv_{w}$ is not guaranteed in general. Remarkably, failure of transitivity is the only obstruction to $\equiv_w$ being an equivalence relation.
\begin{theorem}\label{thm:transimpsym}
For every $w\in \beta \mathbb Z\setminus \set 0$, the relation $\equiv_w$ is an equivalence relation if and only if it is transitive.
\end{theorem}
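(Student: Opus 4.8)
Since $\equiv_w$ is reflexive by \Cref{fact:equivrefl}, and an equivalence relation is exactly a reflexive, symmetric, transitive relation, the theorem reduces to showing that transitivity of $\equiv_w$ implies symmetry of $\equiv_w$. So I assume $\equiv_w$ is transitive and aim to prove: $u\equiv_w v$ implies $v\equiv_w u$, for all $u,v\in\beta\mathbb Z$.

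**The key translation.** By \Cref{rem:divcongbasics}(3), $u\equiv_w v$ means $w\tmid u\ominus v$, and $v\equiv_w u$ means $w\tmid v\ominus u = -(u\ominus v)$. Since $\tmid$ depends only on which $\mid$-upward-closed sets lie in an ultrafilter (\Cref{rem:divchar}(1)), and $A$ is $\mid$-upward closed iff $-A$ is, we have $w\tmid t$ iff $w\tmid -t$ for every $t$. Hence symmetry of $\equiv_w$ is equivalent to the seemingly weaker statement that $w\tmid t$ implies $w\tmid -t$ — which is actually always true! So the real content must be elsewhere: the subtlety is that $u\ominus v$ is not literally $-(v\ominus u)$ as ultrafilters in general (addition in $\beta\mathbb Z$ is noncommutative), so $v\equiv_w u$ is $w\tmid v\oplus(-u)$, and I must compare $v\oplus(-u)$ with $u\oplus(-v)$. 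The plan is therefore: first reduce, using \Cref{rem:divcongbasics} and the remark after \Cref{defin:midequiv} about moving universal quantifiers, to a statement purely about realisations in ${}^\ast\mathbb Z$; then exploit transitivity via a cleverly chosen intermediate ultrafilter.

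**The main argument.** Assume $u\equiv_w v$; fix a tensor triple realising things appropriately. The idea is to interpose $0$: we know $\equiv_w$ is reflexive, and I expect one can show $u\equiv_w v$ together with $v\equiv_w v$ (reflexivity at $v$) can be chained — but chaining needs the middle term to match, so more care is needed. A cleaner route: show that $u\equiv_w v$ implies $u\ominus v\equiv_w 0$ and conversely $v\ominus u\equiv_w 0$, reducing symmetry to the claim that $t\equiv_w 0\Rightarrow (-t)\equiv_w 0$, i.e. $w\tmid t\Rightarrow w\tmid -t$, which holds unconditionally. The delicate point is justifying $u\equiv_w v\iff u\ominus v\equiv_w 0$: the forward direction uses that if $(a,b)\models u\otimes v$ with $d\mid a-b$ then one wants $(d, a+{}^\ast(-b))$ or similar to witness $u\ominus v\equiv_w 0$, but $a - b$ and a realisation of $u\ominus v$ need not agree. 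Here is where transitivity enters: from $u\equiv_w v$ and $v\equiv_w v$ we get nothing new, but from $0\equiv_w (u\ominus v)$ — which we'd need — plus transitivity with a realisation-level argument comparing $u\oplus(-v)$ and $(u\oplus(-v))\oplus 0$, one threads the needle. I would carry it out by: (i) picking $a\models u$, $b\models v\mid\mathbb Z a$, $d\models w$ with $d\mid a-b$; (ii) observing $a - b \models u\ominus v$ (this is the standard fact that $a + {}^\ast b'\models u\oplus v$ when $b'\models v$ and, using $-v$, that $a-b\models u\ominus v$ when $(a,b)$ is a tensor pair); (iii) hence $w\tmid u\ominus v$ directly, and since $-(a-b) = b - a$ and $b-a\models v\ominus u$? — no, $(b,a)$ need not be a tensor pair. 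This asymmetry, (iii), is precisely the crux.

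**The expected obstacle.** The heart of the difficulty is that $(a,b)\models u\otimes v$ does not give $(b,a)\models v\otimes u$, so $b-a$ does not obviously realise $v\ominus u$, and hence $w\tmid u\ominus v$ does not immediately yield $w\tmid v\ominus u$. Overcoming this is where transitivity must be used essentially: I anticipate the proof goes by taking a realisation $(d,a,b)\models w\otimes u\otimes v$, then building a longer tensor tuple — say by further realising $u$ over $\mathbb Z d a b$ — to produce witnesses for $u\equiv_w v$ and a second instance of $\equiv_w$ whose composite, via transitivity, is $v\equiv_w u$. Concretely I would look for $a'\models u\mid \mathbb Z dab$ and use that $a\equiv_? a'$ and $d\mid a-b$ to chain $v\equiv_w a' \equiv_w u$ or similar. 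The bookkeeping of which tuples form tensor tuples (governed by \Cref{fact:abstensorpair}) will be the technical core; everything else is formal manipulation via \Cref{rem:divcongbasics,rem:divchar}.
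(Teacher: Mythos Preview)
Your proposal correctly identifies the setup (reflexivity is free, so it suffices to derive symmetry from transitivity) and pinpoints the genuine obstacle: since $\oplus$ is not commutative, $v\ominus u$ need not equal $-(u\ominus v)$, so the unconditional fact $w\tmid t\Leftrightarrow w\tmid -t$ does not by itself give symmetry. However, the proposal never actually closes this gap. The ``main argument'' section wavers: at one point you say the task reduces to $t\equiv_w 0\Rightarrow(-t)\equiv_w 0$, which you note is automatic, but then correctly observe this is not the right reduction; you then call ``delicate'' the equivalence $u\equiv_w v\Leftrightarrow (u\ominus v)\equiv_w 0$, which is in fact immediate from \Cref{rem:divcongbasics}(3). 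The final paragraph proposes extending to a longer tensor tuple $(d,a,b,a')$ and chaining two instances of $\equiv_w$ via transitivity, but this is only a sketch of an intention, not an argument: you never say which two relations you chain, nor why the required divisibility $d'\mid b-a'$ holds.

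The missing idea is short. Write $t\coloneqq u\ominus v$ and $t'\coloneqq -v\oplus u$; these are the two noncommuting versions of ``$u-v$''. Then $u\equiv_w v$ says $t\equiv_w 0$, while $v\equiv_w u$ is equivalent (using $w\tmid s\Leftrightarrow w\tmid -s$) to $t'\equiv_w 0$. The crucial observation is that $t'\ominus t=-v\oplus u\oplus(-u)\oplus v$ lies in $\mathrm{MAX}$, since modulo every standard $n$ it vanishes; hence $t'\equiv_w t$. Now transitivity gives $t'\equiv_w t\equiv_w 0$, and symmetry follows. This is exactly the paper's proof. Your tensor-tuple plan, if carried out, would end up needing that $a-a'$ is divisible by every standard integer whenever $a,a'\models u$ in a tensor configuration, which is precisely the $\mathrm{MAX}$ observation in disguise; but as written, the proposal does not reach it.
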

\begin{proof}
By \Cref{fact:equivrefl}  we only need to show that if $\equiv_w$ is transitive, then it is symmetric.
Assume symmetry fails, as witnessed by $u,v$ such that  $u\equiv_w v$ but $v\not\equiv_w u$. Let $t\coloneqq u\ominus v$ and $t'\coloneqq -v\oplus u$. By construction $t\equiv_w 0$ and $t'\not\equiv_w 0$. On the other hand $t'\ominus t$ is easily checked to be in $\mathrm{MAX}$, hence $t'\equiv_w t$. It follows that $t'\equiv_w t\equiv_w 0$, but $t'\not\equiv_w 0$, so transitivity fails.
\end{proof}
Symmetry of $\equiv_{w}$ can also fail for reasons that have little to do with transitivity.\footnote{At least \emph{prima facie}, since we do not know whether a symmetric $\equiv_w$ must be automatically transitive, see \Cref{prob:symntrans}.} We will prove this by using upper Banach density, denoted by $\operatorname{BD}$.  For definitions and basic properties around densities, see e.g.~\cite{moreira_proof_2019} and references therein. Specifically, we will use the following consequence of~\cite[Theorem~2.1]{bergelson1985sets}; see~\cite[Corollary~2.4]{moreira_proof_2019} for more details. 
\begin{fact}
\label{dens:case}
Suppose that $\{B_n\}_{n\in\N}$ is a family of subsets of $\Z$, that $\Phi$ 
is a sequence of intervals of increasing length, and denote by $d_\Phi$ the associated density. If there is $\varepsilon>0$ such that, for every $n\in\N$, the density $d_\Phi(B_n)$ exists and  is larger than $\varepsilon$, then there is an infinite $X\subseteq\N$ such that the family $\{B_x: x\in X\}$ can be extended to a nonprincipal ultrafilter.
\end{fact}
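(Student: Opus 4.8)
The plan is to realise the density $d_\Phi$ as a finitely additive probability measure and then feed the hypothesis into the correlation estimate of Bergelson cited above. The first step is a purely set-theoretic reduction: a family $\mathcal F$ of subsets of $\Z$ is contained in a nonprincipal ultrafilter if and only if $\mathcal F$ together with the Fr\'echet filter has the finite intersection property, i.e.\ if and only if $\bigcap_{x\in F}B_x$ is infinite for every finite $F\subseteq\mathcal F$. So it is enough to produce an infinite $X\subseteq\N$ such that every finite subfamily of $\set{B_x:x\in X}$ has infinite intersection; in fact we shall obtain intersections of strictly positive density.

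First I would fix a nonprincipal ultrafilter $\mathfrak q$ on the index set of $\Phi=(I_k)_k$ and put $\mu(B)\coloneqq\lim_{k\to\mathfrak q}\abs{B\cap I_k}/\abs{I_k}$ for $B\subseteq\Z$. This is a finitely additive probability measure on the Boolean algebra of all subsets of $\Z$, and it agrees with $d_\Phi$ on every set whose $d_\Phi$-density exists; in particular $\mu(B_n)=d_\Phi(B_n)>\varepsilon$ for all $n$. (Alternatively, as in \cite{moreira_proof_2019}, one invokes the Furstenberg correspondence principle to pass to a genuine probability space; the ensuing combinatorics is unaffected.)

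Next I would apply \cite[Theorem~2.1]{bergelson1985sets}, in the form recorded in \cite[Corollary~2.4]{moreira_proof_2019}: a sequence of sets of $\mu$-measure uniformly bounded below by $\varepsilon>0$ admits an infinite index set $X$ along which every finite intersection still has positive $\mu$-measure. Applied to $(B_n)_n$, this yields an infinite $X\subseteq\N$ with $\mu\bigl(\bigcap_{x\in F}B_x\bigr)>0$ for every finite $F\subseteq X$. Finally, since $\abs{I_k}\to\infty$ and $\mu(C)>0$ forces $\abs{C\cap I_k}\ge(\mu(C)/2)\abs{I_k}$ for $\mathfrak q$-many $k$, any set of positive $\mu$-measure is infinite; hence every finite subfamily of $\set{B_x:x\in X}$ has infinite intersection, and by the reduction of the first paragraph $\set{B_x:x\in X}$ extends to a nonprincipal ultrafilter.

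The only genuinely substantive ingredient is the Bergelson correlation estimate in the third step, whose proof rests on a weak-compactness argument for the indicator functions $\mathbf 1_{B_n}$ viewed in $L^2(\mu)$ — precisely the input the uniform lower bound $d_\Phi(B_n)>\varepsilon$ is designed to supply, and the step I expect to be the main obstacle if one wants a self-contained account. The only further point demanding a little care is that $d_\Phi$ is an honest limit along $\Phi$ rather than a $\limsup$, so it is cleanest to run the whole argument with the $\mathfrak q$-limit $\mu$ and to translate ``$\mu>0$'' into ``infinite'' only at the very end, thereby never needing to decide whether $d_\Phi$ of an intersection exists.
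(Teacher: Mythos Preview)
The paper does not actually prove this statement: it records it as a \emph{Fact}, attributing it to \cite[Theorem~2.1]{bergelson1985sets} and pointing to \cite[Corollary~2.4]{moreira_proof_2019} for the derivation. Your proposal correctly spells out that derivation---pass from $d_\Phi$ to a finitely additive probability measure via an ultrafilter limit, invoke Bergelson's intersectivity lemma to obtain an infinite $X$ with all finite intersections of positive measure, and observe that positive measure implies infinite---and this is exactly the route the cited references take, so there is nothing to add.
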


\begin{theorem}
\label{not:sym}
Let $A\subseteq\Z$ be such that $\BD(A)>0$ and $A^\cc$ is thick. Then there are $u, v\in\beta\Z\setminus\Z$ such that $A\in u\oplus v$ and $A^\cc\in v\oplus u$.
\end{theorem}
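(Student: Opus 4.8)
The plan is to reduce the two membership statements to iterated coset conditions and then exploit the hypotheses asymmetrically: thickness of $A^\cc$ will make one of the conditions automatic, while positive density of $A$ will feed \Cref{dens:case} to produce the other ultrafilter. Recall that $A\in u\oplus v$ if and only if $\set{x\in\Z : A-x\in v}\in u$, and that $A^\cc\in v\oplus u$ if and only if $\set{y\in\Z : A^\cc-y\in u}\in v$. So it suffices to construct nonprincipal $u,v$ with $\set{x : A-x\in v}\in u$ and $\set{y : A^\cc-y\in u}\in v$.

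First I would take care of the second condition once and for all by constraining $u$. For $k\in\N$ set $T_k\coloneqq\set{n\in\Z : [n-k,n+k]\subseteq A^\cc}$; each $T_k$ is nonempty because $A^\cc$ is thick, and $T_{k+1}\subseteq T_k$. If $u$ contains every $T_k$, then for each $y\in\Z$ we have $T_{\abs{y}}\subseteq A^\cc-y$, hence $A^\cc-y\in u$; thus $\set{y : A^\cc-y\in u}=\Z$, which lies in every $v$. Such a $u$ is moreover automatically nonprincipal, since no integer belongs to all the $T_k$ (that would force $A^\cc=\Z$, against $\BD(A)>0$).

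For the first condition I bring in the density. Fix, using $\BD(A)=\delta>0$, a sequence $\Phi$ of intervals whose lengths tend to infinity and along which $A$ has density exactly $\delta$; since shifting an interval by a fixed amount alters the number of elements of $A$ in it only boundedly, every translate $A-n$ also has $\Phi$-density exactly $\delta$. Now pick $n_j\in T_j$ for each $j\in\N$ and apply \Cref{dens:case} to the family $B_j\coloneqq A-n_j$ (each of $\Phi$-density $\delta>\delta/2$): this yields an infinite $X\subseteq\N$ and a nonprincipal ultrafilter $v$ with $A-n_j\in v$ for every $j\in X$. Finally put $S'\coloneqq\set{n_j : j\in X}$; as $n_j\in T_j\subseteq T_k$ whenever $j\geq k$, the set $S'$ meets every $T_k$, so $\set{S'}\cup\set{T_k : k\in\N}$ generates a proper filter, and I let $u$ be any ultrafilter extending it. Then $u$ is nonprincipal and $A^\cc\in v\oplus u$ by the previous paragraph, while $S'\subseteq\set{x : A-x\in v}$ and $S'\in u$ give $A\in u\oplus v$.

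The one genuinely delicate point is the circularity built into the problem: the condition on $u$ mentions $v$ and vice versa. The plan breaks it by (a) making $u$'s side of the second condition trivial through the $T_k$, and (b) constructing $v$ before $u$ — but since \Cref{dens:case} returns only \emph{some} infinite subfamily indexed by an uncontrolled $X$, the construction of $u$ must wait until $X$ is known, and one then has to verify that $\set{S'}\cup\set{T_k : k\in\N}$ is still a filter base. That verification is exactly where the bookkeeping choice $n_j\in T_j$ pays off, and it is the step I would be most careful about.
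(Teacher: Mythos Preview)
Your argument is correct and follows essentially the same route as the paper: pick centres $n_j$ of longer and longer intervals inside $A^\cc$ (your $n_j\in T_j$ are precisely the paper's $c_j$), use \Cref{dens:case} on the translates $A-n_j$ to obtain $v$ and the infinite index set $X$, and then let $u$ concentrate on $\{n_j:j\in X\}$. The only cosmetic difference is that the paper phrases the verification of $A^\cc\in v\oplus u$ in nonstandard terms (any nonstandard $a\in{}^{\ast}Y$ satisfies $n+a\in{}^{\ast}A^\cc$ for all standard $n$), whereas you package the same fact via the nested sets $T_k$; your explicit inclusion of the $T_k$ in the filter is harmless but in fact redundant, since $S'\setminus T_k$ is finite and any nonprincipal ultrafilter containing $S'$ already contains every $T_k$.
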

\begin{proof}
Recall that a subset of $\mathbb Z$ is \emph{thick} iff it contains arbitrarily long intervals.  Hence, by assumption we can find, for every $n\in \mathbb N$, an interval $J_n=[a_n, b_n]$ such that $\abs{J_n}=2n+1$ and $J_n\subset A^\cc$. Denote by $\Phi$  the sequence of intervals along which $\BD(A)=d_\Phi(A)$ and observe that, for every $m\in\Z$, we have $d_\Phi(A-m)=d_\Phi(A)$.

Set $c_n\coloneqq (a_n+b_n)/2$ and apply \Cref{dens:case} with  $B_n\coloneqq A-c_n$, finding $Y\subseteq\{c_n\}_{n\in\N}\subseteq A^\cc$ such that $\{A-y: y\in Y\}$ is contained in a nonprincipal ultrafilter $v$.

  Fix any nonprincipal ultrafilter $u$ containing $Y$. By construction, $A\in u\oplus v$, and we are left to show that $A^\cc \in v \oplus u$.
Because $Y\subseteq \{c_n\}_{n\in\N}$, for every $a\in {}^{\ast}Y\setminus Y$, hence in particular for every $a\models u$,   and for every $n\in\Z$ we have  $n+a\in{}^{\ast}A^\cc$. Therefore, if $(b, a)\models v\otimes u$, we have  $b+a\in{}^{\ast}A^\cc$, concluding the proof.
\end{proof}

\begin{corollary}
\label{fail:sym}
There is $w\in \beta \mathbb Z\setminus\set 0$ such that $\equiv_w$ is not symmetric.
\end{corollary}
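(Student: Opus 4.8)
The plan is to derive the corollary directly from \Cref{not:sym} together with \Cref{rem:divcongbasics}(3), which tells us that $x\equiv_w y$ is equivalent to $w\tmid x\ominus y$, and hence that $\equiv_w$ being symmetric amounts to saying $w\tmid (x\ominus y)\Rightarrow w\tmid (y\ominus x)$ for all $x,y$. So it suffices to produce an ultrafilter $w$ and ultrafilters $u,v$ such that $w\tmid u\ominus v$ but $w\not\tmid v\ominus u$. The first task is therefore to find a set $A\subseteq\Z$ with $\BD(A)>0$ and $A^\cc$ thick; this is routine, for instance one can take $A$ to be a union of suitably spaced long intervals (say blocks of length $2^n$ around points $4^n$), so that $A$ has positive Banach density while the complementary gaps grow without bound, making $A^\cc$ thick. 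Applying \Cref{not:sym} to this $A$ yields nonprincipal $u,v$ with $A\in u\oplus v$ and $A^\cc\in v\oplus u$.

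Next I would like to turn the pair $(u,v)$ into a \emph{single} witness for non-symmetry by translating the additive statement into a divisibility statement. Set $w\coloneqq u\oplus v$. The point is to arrange that membership of $A$ in $u\oplus v$ forces a divisibility relation involving $w$, while membership of $A^\cc$ in $v\oplus u$ obstructs the reversed one. Concretely, one wants to choose $A$ from the outset inside the upward-closed family $\mathcal U$, or more precisely to massage the construction so that the relevant ultrafilters lie in $\mathrm{MAX}$ or interact with it in the right way. The cleanest route: recall from \Cref{rem:divchar}(1) that $u\tmid v$ iff $u\cap\mathcal U\subseteq v$, so if we can ensure $A\in\mathcal U$ and $A^\cc$ omits some member of $w\cap\mathcal U$, we would be done with $w$, $u\ominus(-v)$-type rearrangements. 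I expect the actual argument to instead exploit that $\equiv_w$ relates to $\tmid$ via \Cref{rem:divcongbasics}(3): taking $x,y$ with $x\ominus y = $ something carried by $u\oplus v$ and $y\ominus x$ carried by the reversed product, and a $w$ that divides the former but not the latter, yields the asymmetry. The construction in \Cref{not:sym} is flexible enough (we are free to enlarge $u$ and can pre-select $A$ inside $\mathcal U$ by replacing $A$ with $\{n : \exists a\in A,\ a\mid n\}$, checking this preserves positive density and thickness of the complement) that this can be pushed through.

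The main obstacle I anticipate is the bookkeeping needed to convert the purely additive asymmetry $A\in u\oplus v$, $A^\cc\in v\oplus u$ into an asymmetry of the \emph{divisibility}-flavoured relation $\equiv_w$, since $\equiv_w$ involves a tensor product $w\otimes u\otimes v$ and a condition $d\mid a-b$ rather than a plain sum. The key step is choosing $A$ (hence $u$, $v$) so that $A$, as a set of \emph{potential divisors}, is upward closed under $\mid$ while keeping $\BD(A)>0$ and $A^\cc$ thick — this is where one must be slightly careful, since upward closure under divisibility tends to inflate a set, and one has to verify the complement still contains arbitrarily long intervals (e.g.\ long runs of integers none of which is a multiple of any element of the ``core'' of $A$, which exist by CRT-style spacing). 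Once $A\in\mathcal U$ is secured, \Cref{rem:divchar}(1) converts the additive membership facts into $\tmid$ facts, and \Cref{rem:divcongbasics}(3) packages them as a failure of symmetry of $\equiv_w$ for $w\coloneqq u\oplus v$ (or $w\coloneqq v\oplus u$, as appropriate). Everything else — existence of the set $A$, invariance of $d_\Phi$ under translation (already noted in the proof of \Cref{not:sym}), non-principality of the ultrafilters — is routine.
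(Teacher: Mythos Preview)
Your plan has a genuine gap at the crucial conversion step. You propose to replace $A$ by its $\mid$-upward closure $\{n:\exists a\in A,\ a\mid n\}$ so that $A\in\mathcal U$, while retaining $\BD(A)>0$ and $A^\cc$ thick. This cannot be done: if $A\in\mathcal U$ contains some nonzero $a$ (as it must once $\BD(A)>0$), then $a\Z\subseteq A$, so $A^\cc\subseteq(a\Z)^\cc$ contains no interval of length $\ge\abs a$ and is therefore not thick. No CRT-style spacing helps, because the obstruction is that \emph{every} sufficiently long interval meets $a\Z$. Hence the hypotheses of \Cref{not:sym} are incompatible with $A\in\mathcal U$, and your intended use of \Cref{rem:divchar}(1) to pass from the additive asymmetry to a $\tmid$-asymmetry collapses. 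Setting $w\coloneqq u\oplus v$ does not rescue the argument either: nothing in the construction connects membership of $A$ in $u\oplus v$ to a relation of the form $(u\oplus v)\tmid(x\ominus y)$.

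The paper sidesteps this by a different choice of both $A$ and $w$. It takes $A$ to be the squarefree integers: this set has positive density, its complement is thick by a short CRT argument, and it is symmetric under negation, so \Cref{not:sym} yields nonprincipal $u,v$ with $u\ominus v$ squarefree and $v\ominus u$ not. The relevant structural feature is not upward-closedness of $A$ but that $A$ and $A^\cc$ are separated by \emph{divisibility by squares}: any realisation of $v\ominus u$ is divisible by some (necessarily nonstandard) square $d>1$, and taking $w\coloneqq\tp(d/\Z)$ gives $w\tmid v\ominus u$, while $w\not\tmid u\ominus v$ because every realisation of $w$ is a square $>1$ and every realisation of $u\ominus v$ is squarefree. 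The witness $w$ is thus a \emph{divisor} extracted from $v\ominus u$, not the sum $u\oplus v$.
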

\begin{proof}
It is well-known that the set of squarefree integers has positive density, see e.g.~\cite{jakimczuk_simple_2013} for a short proof.
Moreover, an easy application of the Chinese Remainder Theorem shows that its complement is thick: if $p_k$ is the $k$-th prime, it suffices to find an integer $n$ such that $n\equiv_{p_k^2}-k$ for sufficiently many $k$. By \Cref{not:sym}, and the fact that squarefree integers form a symmetric subset of $\mathbb Z$, there exist $u, v\in\beta\Z\setminus\Z$ such that $u\ominus v$ is squarefree (that is, it contains the set of squarefree integers; equivalently, its realisations are squarefree) and $v\ominus u$ is not. Since $v\ominus u$ is not squarefree, it is divided by some square $w>1$, which cannot divide the squarefree $u\ominus v$.
\end{proof}
\begin{remark}\label{rem:failsym}
The proof of Corollary \ref{fail:sym} also works if we fix  an arbitrary $\alpha\from \mathbb{P}\to\omega+1$  such that $\alpha(p)>1$ for every $p\in \mathbb{P}$ and (recalling that we convene $p^{\omega}\Z=\set 0$), replace the squarefree integers by  $A\coloneqq\bigl(\bigcup_{p\in\mathbb{P}}p^{\alpha(p)}\Z\bigr)^\cc$,  the squarefree case corresponding to $\alpha$ being constantly $2$. The set $A$ has positive density because it contains the squarefree integers, and its complement is again proven to be thick by using the Chinese Remainder Theorem.
\end{remark}

\section{Self-divisible ultrafilters}\label{sec:main}

In the previous section, we have seen examples of ultrafilters $w$ such that $\equiv_{w}$ is not an equivalence relation, namely all those in $\mathrm{MAX}\oplus 1$ except $1$. On the other hand, there are ultrafilters $w$ such that $\equiv_w$ is an equivalence relation, for instance all principal ones.\footnote{Keep reading for less trivial examples.} It is natural to look for a characterisation of when this happens; in this section, we provide a complete solution to this problem.

\begin{definition} Let $w\in\beta\Z$.
  \begin{enumerate}
  \item We denote the set of integers dividing $w$ by $D(w)\coloneqq\set{n\in \mathbb Z: n \mathbb Z\in w}$.
  \item We call $w\in \beta \mathbb Z\setminus \set 0$ \emph{self-divisible} iff $D(w)\in w$.
\end{enumerate}
\end{definition}

\begin{remark}\label{rem:dfsmid}
  By \Cref{rem:divchar}, $w\smid u$ if and only if  $D(u)\in w$ if and only if for some (equivalently, every)  $(a,b)\models w\otimes u$ we have $a\mid b$. In particular, $w$ is self-divisible if and only if $w\smid w$. In nonstandard terms, this means that, whenever $a\in {}^{\ast} \mathbb Z$ generates $w$, we have $a\mid {}^{\ast} a$.
\end{remark}
As anticipated in the introduction, the relations $\equiv_w$ and $\sequiv w$ have complementary shortcomings: the former is not, in general, an equivalence relation; as for the latter, there are ultrafilters $w$ such that $w\nsequiv w0$, equivalently, such that $w \nsmid w$, for example any $w$ which is divided by no $n>1$. By definition, the self-divisible ultrafilters are those such that $\sequiv w$ is well-behaved in this respect. Perhaps unexpectedly, we will prove below that the   self-divisible $w$ are also precisely those for which the weak congruence $\equiv_w$ is well-behaved, that is, is an equivalence relation.

We will do this via a small detour in the realm of profinite integers $\hat{\mathbb Z}$. This is no coincidence, since we will later show in \Cref{rem:vvprime} that $(\beta\mathbb Z, \oplus)/\mathord{\sequiv w}$ is isomorphic to a quotient of $\hat {\mathbb Z}$. The connection between $\beta \mathbb Z$ and $\hat{\mathbb Z}$ arises naturally from the fact that every $u\in \beta \mathbb Z$ induces a consistent choice of remainder classes modulo the standard integers, that is, an element of $\hat {\mathbb Z}$. Let us give a name to the corresponding function.
\begin{definition}
Define $\pi\from \beta \mathbb Z \to \hat{\mathbb Z}$ as the map sending each ultrafilter to the sequence of its remainder classes.  
\end{definition}
Our detour will lead us to talk about the following sets.
\begin{definition}
  We denote by $Z_w\coloneqq\set{u\in \beta \mathbb Z: w\tmid u}$ the set of ultrafilters divisible by $w$.
\end{definition}
\begin{remark}
By \Cref{rem:divchar}  the set $Z_w$ is a closed subset of $\beta \mathbb Z$, corresponding to the filter of $\mid$-upward-closed elements of $w$.
\end{remark}
The remark above has a converse. Since we will never use it,  we leave the (standard) proof to the reader.
\begin{remark}
If $\mathcal F\subseteq \mathcal U$ is a family of $\mid$-upward-closed subsets of $\mathbb Z$, then there is $w\in \beta \mathbb Z$ such that $\mathcal F=w\cap \mathcal U$ if and only if $\mathcal F$ is a prime filter on the distributive lattice $\mathcal U$.
\end{remark}
\begin{lemma}\label{lemma:pitoZhat}The following statements hold.
  \begin{enumerate}
  \item The map $\pi$ is continuous, surjective, and a homomorphism of semigroups
    $(\beta \mathbb Z, \oplus)\to (\hat{\mathbb Z}, +)$.  
  \item The quotient topology induced by $\pi$ coincides with the usual topology of $\hat{\mathbb Z}$.
  \item If $Z_w$ is
    closed under $\oplus$, then $\pi(Z_w)$ is a closed subgroup of $(\hat {\mathbb Z}, +)$.
\end{enumerate}
\end{lemma}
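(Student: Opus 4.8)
The plan is to handle the three items in order, reducing everything to elementary facts about ultrafilters and the profinite topology. For item~(1): continuity of $\pi$ is checked on a subbasis of $\hat{\mathbb Z}$, namely the sets of the form $\set{x\in \hat{\mathbb Z}: x\equiv_n k}$ for $n\in \mathbb N$ and $k\in\set{0,\ldots,n-1}$; the preimage under $\pi$ of such a set is exactly $\overline{k+n\mathbb Z}$, a basic clopen subset of $\beta \mathbb Z$, so $\pi$ is continuous. Surjectivity: given a consistent choice of remainders $(a_n)_{n\in\mathbb N}\in \hat{\mathbb Z}$, the family $\set{a_n+n\mathbb Z: n\in\mathbb N}$ has the finite intersection property (consistency is exactly what makes any two of these sets intersect, and more generally a finite subfamily is controlled by the least common multiple of the moduli involved), hence extends to some $u\in\beta\mathbb Z$, and then $\pi(u)=(a_n)_n$. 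That $\pi$ is a semigroup homomorphism $(\beta\mathbb Z,\oplus)\to(\hat{\mathbb Z},+)$ follows from the nonstandard description of $\oplus$: if $a\models u$ and $b\models v$ then $a+{}^{\ast}b\models u\oplus v$, and for each standard $n$ the residue of $a+{}^{\ast}b$ modulo $n$ is the sum of the residues of $a$ and of ${}^{\ast}b$, the latter agreeing with that of $b$ since ${}^{\ast}b\equiv_n b$; thus $\pi(u\oplus v)=\pi(u)+\pi(v)$.

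For item~(2): a continuous surjection from a compact space to a Hausdorff space is a closed map, hence a quotient map, so the quotient topology induced by $\pi$ is automatically the topology of $\hat{\mathbb Z}$. (Alternatively, one notes $\beta\mathbb Z$ is compact, $\hat{\mathbb Z}$ is Hausdorff, and $\pi$ is a continuous surjection, so it is a quotient map by the standard topology lemma.) This is the shortest of the three points.

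For item~(3): suppose $Z_w$ is closed under $\oplus$. First, $Z_w$ is a closed subset of $\beta\mathbb Z$ by the remark preceding the lemma, and $\pi$ is a closed map by item~(2)'s argument, so $\pi(Z_w)$ is closed in $\hat{\mathbb Z}$. It remains to see $\pi(Z_w)$ is a subgroup. It is closed under $+$ because $\pi$ is a homomorphism and $Z_w$ is closed under $\oplus$. For the identity: every $w$ satisfies $w\tmid w$ and also $w\tmid(-w)$ (indeed $A\mapsto -A$ is a bijection, so $\abs a\le\abs{{}^{\ast}f(b)}$-type conditions are symmetric, or simply $w\cap\mathcal U=(-w)\cap\mathcal U$), and then $w\oplus(-w)\in Z_w$ by hypothesis; but $\pi(w\oplus(-w))=\pi(w)-\pi(w)=0$, so $0\in\pi(Z_w)$. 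For inverses: given $u\in Z_w$, we need some $u'\in Z_w$ with $\pi(u')=-\pi(u)$; take $u'\coloneqq w\oplus(-u)$, which lies in $Z_w$ since $w\tmid w$ and $Z_w$ is $\oplus$-closed, and $\pi(u')=\pi(w)+(-\pi(u))$. This is $-\pi(u)$ precisely when $\pi(w)=0$, i.e.\ when $n\mathbb Z\in w$ for all $n$; so in general one instead argues that $\pi(Z_w)$ is a closed sub\emph{semigroup} of the compact group $\hat{\mathbb Z}$ containing $0$, and a standard fact (a closed subsemigroup of a compact group is a subgroup) upgrades it to a subgroup. I expect this last point — getting closure under inverses rather than merely under addition — to be the only genuine subtlety, and the cleanest route is to invoke that a nonempty closed subsemigroup of a compact topological group is automatically a closed subgroup, applied to the compact group $\hat{\mathbb Z}$.
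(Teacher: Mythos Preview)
Your proposal is correct. Items~(1) and~(2) match the paper's approach exactly (the paper merely calls item~(1) an ``easy exercise'' and gives the same compact-to-Hausdorff argument for item~(2)), so your added detail is welcome.

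For item~(3), you take a slightly different route than the paper. You arrive at a closed sub\emph{semigroup} of $\hat{\mathbb Z}$ containing $0$, then invoke the standard fact that a nonempty closed subsemigroup of a compact group is a subgroup. The paper instead observes directly that $Z_w$ always contains $0$ (since $w\tmid 0$) and is always closed under $u\mapsto -u$ (since $w\tmid u$ iff $w\tmid -u$, as $\mid$-upward-closed sets are symmetric); from this, $\pi(Z_w)$ is visibly closed under additive inverses once it is closed under addition, so grouphood is immediate without the compact-subsemigroup lemma. Your approach is perfectly valid, but note that your abandoned attempt via $u'\coloneqq w\oplus(-u)$ was one step away from the paper's: had you taken $u'\coloneqq -u$ instead, you would have landed on exactly the paper's argument and avoided the appeal to the subsemigroup fact. (Also, your verification that $0\in\pi(Z_w)$ via $w\oplus(-w)$ is correct but roundabout; simply $0\in Z_w$ suffices.)
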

\begin{proof}
The first point is an easy exercise, and the second one follows from the facts that every continuous surjective map from a compact space to a Hausdorff one is closed, and that every continuous, surjective, closed map is automatically a quotient map. As for the last point, (for every $w$) the set $Z_w$ contains $0$ and is closed under $-$. Grouphood of $\pi(Z_w)$ then follows easily from the assumption, and we conclude by observing that  both spaces are compact Hausdorff.
\end{proof}

\begin{lemma}\label{lemma:sdoop}
For every $w\in \beta \mathbb Z \setminus \set 0$ and every $u,u^{\prime}\in\beta\Z$, if $\pi(u)=\pi(u')$, then $u\sequiv w u'$. 
\end{lemma}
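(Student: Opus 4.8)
The plan is to unwind the definition of $\pi$ and then invoke the extremal properties of $\mathrm{MAX}$ recorded in \Cref{fact:maxsdiv}. First I would observe that $\pi(u)=\pi(u')$ says precisely that $u$ and $u'$ induce the same remainder class modulo every standard $n$, i.e.\ $u\equiv_n u'$ for all $n\in\mathbb N$; by \Cref{rem:divcongbasics} this is the same as saying that $n\tmid(u\ominus u')$ for every $n\in\mathbb N$. (Equivalently, since $\pi$ is a group homomorphism by \Cref{lemma:pitoZhat}, the hypothesis gives $\pi(u\ominus u')=0$, which unwinds to the same statement.)

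Next, writing $t\coloneqq u\ominus u'$, I would split on whether $t=0$. If $t=0$ then $w\smid t$ holds trivially, since any $(a,0)\models w\otimes 0$ satisfies $a\mid 0$. If $t\neq 0$, then $t\in\beta\mathbb Z\setminus\set 0$ satisfies clause~(3) of \Cref{fact:maxsdiv} (every standard integer $\tmid$-divides it), hence also clause~\eqref{point:maxaresmax}, so in particular, instantiating the latter at $w$, we get $w\smid t$. In either case $w\smid(u\ominus u')$, which by \Cref{rem:divcongbasics} is exactly $u\sequiv w u'$.

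This is a short deduction, so I do not expect a genuine obstacle; the only thing to handle with a modicum of care is the degenerate case $t=0$ (equivalently $u=u'$), which is why the statement is phrased for arbitrary $u,u'\in\beta\mathbb Z$ rather than only for those with $u\ominus u'\in\mathrm{MAX}$. The conceptual content is simply the remark that $\pi(u)=\pi(u')$ is equivalent to $u\ominus u'$ being divisible by every standard integer — i.e.\ to $u\ominus u'$ being as close to lying in $\mathrm{MAX}$ as a single ultrafilter can get — together with \Cref{fact:maxsdiv}, which turns such $\tmid$-divisibility into the required $\smid$-divisibility.
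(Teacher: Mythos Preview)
Your argument is correct and matches the paper's proof: deduce that $u\ominus u'\in\mathrm{MAX}$ from $\pi(u)=\pi(u')$, then apply \Cref{fact:maxsdiv} and \Cref{rem:divcongbasics}. Your explicit case split on $t=0$ is a welcome bit of care (since \Cref{fact:maxsdiv} is stated only for nonzero ultrafilters), though note that $t=0$ is equivalent to $u=u'$ being \emph{principal}, not merely to $u=u'$.
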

\begin{proof}
If $\pi(u)=\pi(u')$, then $u\ominus u'\in \mathrm{MAX}$, and we conclude by \Cref{fact:maxsdiv} and \Cref{rem:divcongbasics}.
\end{proof}

\begin{lemma}\label{lemma:equivwcongr}
  If $\equiv_w$ is an equivalence relation, then it is a congruence with respect to $\oplus$.
\end{lemma}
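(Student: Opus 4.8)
The plan is to show that $\equiv_w$, assumed to be an equivalence relation, is compatible with $\oplus$; that is, if $u\equiv_w u'$ and $v\equiv_w v'$, then $u\oplus v\equiv_w u'\oplus v'$. By transitivity it suffices to prove the two one-sided statements: $u\oplus v\equiv_w u'\oplus v$ and $u'\oplus v\equiv_w u'\oplus v'$. I would phrase both via \Cref{rem:divcongbasics}(3), which says $x\equiv_w y$ iff $w\tmid x\ominus y$, so that the task reduces to understanding $(u\oplus v)\ominus(u'\oplus v)$ and $(u'\oplus v)\ominus(u'\oplus v')$ in terms of $u\ominus u'$ and $v\ominus v'$.

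The key step is a ``telescoping'' computation at the level of realisations, handled through tensor pairs and \Cref{fact:abstensorpair}. For the first reduction, pick $(a,b)\models u\otimes v$ and $(a',b)\models u'\otimes v$ with $a,a'$ suitably related: concretely, start from a witness $d\mid a-a'$ for $u\equiv_w u'$ coming from a triple $(d,a,a')\models w\otimes u\otimes u'$, and then produce $b$ realising $v\mid \mathbb Z d a a'$; since $a+{}^{\ast}b$ and $a'+{}^{\ast}b$ differ by $a-a'$ (the $b$-part cancels), the same $d$ divides $(a+{}^{\ast}b)-(a'+{}^{\ast}b)$, which by the nonstandard characterisation of $\equiv_w$ gives $u\oplus v\equiv_w u'\oplus v$. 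The second reduction is symmetric but requires a little more care about the order of the tensor factors: here I want $b,b'$ with $d\mid b-b'$ and a common $a$ realising $u'$ over everything, and again the difference $({}^{\ast}a+{}^{\ast\ast}b)-({}^{\ast}a+{}^{\ast\ast}b')$ collapses to ${}^{\ast\ast}b-{}^{\ast\ast}b'={}^{\ast\ast}(b-b')$, which is divisible by ${}^{\ast\ast}d$, and divisibility is preserved under hyper-extension. Combining, $u\oplus v\equiv_w u'\oplus v\equiv_w u'\oplus v'$, and transitivity of $\equiv_w$ finishes the argument.

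The main obstacle I anticipate is bookkeeping of the iterated nonstandard extensions and the order in which tensor factors are resolved (recall the convention in this paper that $(a,b)\models u\otimes v$ means $a\models u$ and $b\models v\mid\mathbb Z a$, with the resolution order reversed relative to much of the literature). One has to make sure that when forming $w\otimes(u\oplus v)$ versus $w\otimes(u'\oplus v)$ the \emph{same} generator $d$ of $w$ can be used, and that $b$ (resp.\ $a$) can be chosen generic over all the standard parameters simultaneously; saturation of ${}^{\ast}\mathbb Z$ takes care of the existence, and the remarks after \Cref{defin:midequiv} about replacing the leftmost existential quantifier by a universal one let me align the witnesses. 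Once the correct realisations are lined up, each individual divisibility check is immediate because the unwanted summand cancels in the difference. It is worth noting that only transitivity of $\equiv_w$ is actually used to glue the two one-sided congruences, consistently with \Cref{thm:transimpsym}; reflexivity and symmetry are not needed beyond what transitivity already supplies here.
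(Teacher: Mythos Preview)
Your plan has a genuine gap. The crucial step is the implicit claim that $(a+{}^{\ast}b,\,a'+{}^{\ast}b)$ realises $(u\oplus v)\otimes(u'\oplus v)$. It does not, in general: both coordinates share the same ${}^{\ast}b$, so the second is not generic over the first (check Puritz's criterion, \Cref{fact:abstensorpair}, with $f$ the identity in a situation where $a>a'$). Equivalently, the difference $a-a'$ realises $u\ominus u'$, while what you need is a realisation of $(u\oplus v)\ominus(u'\oplus v)$, and these are typically distinct ultrafilters---they only agree under $\pi$. The same objection applies to your second reduction with a common $a$. (A smaller issue: you write ``coming from a triple $(d,a,a')\models w\otimes u\otimes u'$'', but $u\equiv_w u'$ only yields $d\models w$ and $(a,a')\models u\otimes u'$ separately; a tensor \emph{triple} would already be $\sequiv w$.)

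The paper's argument is exactly the fix for this obstruction. One notes that $\pi(u\ominus v)=\pi(u\oplus t\ominus v\ominus t)$, so by \Cref{lemma:sdoop} these two ultrafilters are $\sequiv w$-equivalent; then one uses that, once $\equiv_w$ is an equivalence relation, it is automatically coarser than $\sequiv w$, whence $u\oplus t\ominus v\ominus t\equiv_w 0$. In other words, the discrepancy between the ``telescoped'' difference you compute and the genuine tensor difference always lies in $\mathrm{MAX}$, hence is $\sequiv w$-trivial; that is the missing idea.
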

\begin{proof}
  Assume that $u\equiv_w v$, and let us show that for all $t\in\beta\Z$ we have  $u\oplus t\equiv_w v\oplus t$.   By definition, $u\ominus v\equiv_w 0$, and it is easy to see that $\pi(u\ominus v)=\pi(u\oplus t \ominus v \ominus t)$.  By \Cref{lemma:sdoop} we have $u\ominus v\sequiv w u\oplus t \ominus v \ominus t$. Whenever $\equiv_w$ is an equivalence relation, it is automatically a coarser one than $\sequiv w$ by definition, so $u\oplus t \ominus v \ominus t\equiv_w 0$, hence  $u\oplus t \equiv_w v\oplus t$. The proof that  $t\oplus u \equiv_w t\oplus v$  is analogous.
\end{proof}
We are now ready to prove the first part of our main result. Later, in \Cref{thm:charred}, we will see several more properties equivalent to self-divisibility.
\begin{theorem}\label{main:thm}
  For every $w\in \beta \mathbb Z \setminus \set 0$, the following are equivalent.
  \begin{enumerate}
  \item \label{point:mainthmwsmidw} The ultrafilter $w$ is self-divisible.
  \item \label{point:mainthmees} The relations $\equiv_w$ and $\sequiv w$ coincide.
  \item \label{point:mainthmeqrel} The relation $\equiv_w$ is an equivalence relation.
  \item \label{point:mainthmphi} For every $u$, we have  $w\tmid u$ if and only if  $D(w)\subseteq D(u)$.
  \item \label{point:mainthmgcd} For every $a,b\models w$  there is  $c\models w$ such that $c\mid \gcd(a,b)$.
  \end{enumerate}
\end{theorem}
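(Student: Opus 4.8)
My plan is to prove the cycle $\eqref{point:mainthmwsmidw}\Rightarrow\eqref{point:mainthmees}\Rightarrow\eqref{point:mainthmeqrel}\Rightarrow\eqref{point:mainthmwsmidw}$, and to prove $\eqref{point:mainthmwsmidw}\Leftrightarrow\eqref{point:mainthmphi}$ and $\eqref{point:mainthmwsmidw}\Leftrightarrow\eqref{point:mainthmgcd}$ separately, using the characterisation $w$ self-divisible $\iff w\smid w$ from \Cref{rem:dfsmid}.

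For $\eqref{point:mainthmwsmidw}\Rightarrow\eqref{point:mainthmees}$: one inclusion is free, since $\sequiv w$ always refines $\equiv_w$ (if $(d,a,b)\models w\otimes u\otimes v$ witnesses $u\sequiv w v$, then $(d,a-b)$ is a tensor pair, after checking that $(a,b)$ realises $u\otimes v$ over $\mathbb Z d$, hence witnesses $u\equiv_w v$; actually the cleanest phrasing is via \Cref{rem:divcongbasics}(3),(4) and the fact that $\smid$ implies $\tmid$). For the converse, suppose $u\equiv_w v$, i.e.\ $w\tmid u\ominus v$. I want $w\smid u\ominus v$. The idea is to use self-divisibility: from $w\smid w$ and $w\tmid t$ (where $t\coloneqq u\ominus v$) I should be able to deduce $w\smid t$. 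Concretely, working nonstandardly: pick $d\models w$ and $e\models t$ with $d\mid e$ (available since $w\tmid t$, and by \Cref{rem:dfsmid} the leftmost quantifier can be taken universal, so we may fix $d$ first). Self-divisibility gives $d\mid {}^{\ast}d$, and I want to manufacture a tensor pair $(d',e')\models w\otimes t$ with $d'\mid e'$; taking $d'={}^{\ast}d$ and an appropriate realisation should work, using that $d\mid e$ lifts to ${}^{\ast}d\mid{}^{\ast}e$ and combining with $d\mid {}^{\ast}d$. I expect the careful bookkeeping of which nonstandard element realises which type over which parameter set to be the first delicate point.

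For $\eqref{point:mainthmees}\Rightarrow\eqref{point:mainthmeqrel}$: immediate from \Cref{fact:sequiveqrel}. For $\eqref{point:mainthmeqrel}\Rightarrow\eqref{point:mainthmwsmidw}$: this is the heart of the matter, and the step I expect to be the main obstacle. Assuming $\equiv_w$ is an equivalence relation, I must show $D(w)\in w$, equivalently (by \Cref{rem:dfsmid}) that for $(a,b)\models w\otimes w$ we have $a\mid b$. The strategy: by \Cref{lemma:equivwcongr}, $\equiv_w$ is a congruence, so the quotient $(\beta\mathbb Z,\oplus)/\mathord{\equiv_w}$ is a well-defined monoid. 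Since $\equiv_w$ then coincides with $\sequiv w$ on the classes we care about — wait, that is what we are trying to prove — instead I will argue directly. Reflexivity gives $w\equiv_w w$; the goal is to upgrade this to $w\smid w$. Consider a tensor triple $(a,b,c)\models w\otimes w\otimes w$. Transitivity applied cleverly to combinations like $a-b$, $b-c$, $a-c$, together with the observation (as in the proofs of \Cref{thm:transimpsym} and \Cref{ex1}) that differences of ``parallel'' tensor realisations land in $\mathrm{MAX}$, should force a divisibility $a\mid b$. The precise combinatorial manoeuvre — which differences to feed into transitivity, and how to use that $\mathrm{MAX}$ absorbs things under $\oplus$ (\Cref{max:ideal}) — is what I need to get right; I anticipate this mirrors the $t,t'$ trick of \Cref{thm:transimpsym}.

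For $\eqref{point:mainthmwsmidw}\Leftrightarrow\eqref{point:mainthmphi}$: the direction $\eqref{point:mainthmphi}\Rightarrow\eqref{point:mainthmwsmidw}$ is trivial by taking $u=w$ (then $D(w)\subseteq D(w)$ gives $w\tmid w$, and $w\tmid w$ together with... actually I need $w\smid w$; but note $D(w)\subseteq D(u)$ is exactly $D(w)\in$ the filter generated, and $w\tmid w$ with the right reading gives self-divisibility — let me instead observe $\eqref{point:mainthmphi}$ with $u=w$ is vacuous, so better: $\eqref{point:mainthmphi}$ says $Z_w=\{u: D(w)\subseteq D(u)\}$, and self-divisibility $D(w)\in w$ is what makes the inclusion $Z_w\subseteq\{u:D(w)\subseteq D(u)\}$ reversible). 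For $\eqref{point:mainthmwsmidw}\Rightarrow\eqref{point:mainthmphi}$: if $D(w)\subseteq D(u)$ and $D(w)\in w$, then $D(w)$ is a set of integers each dividing $u$, whence one builds a tensor pair realising $w\otimes u$ with the divisibility holding — again a saturation/Puritz argument via \Cref{rem:divchar}. For $\eqref{point:mainthmwsmidw}\Leftrightarrow\eqref{point:mainthmgcd}$: unwinding, $w\smid w$ says for $(a,b)\models w\otimes w$, $a\mid b$; symmetrising (tensor on the other side) and combining gives a common divisor $c$ of $a,b$ realising $w$, and conversely the gcd condition plugged into a tensor pair yields $a\mid b$. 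I expect these last two equivalences to be routine applications of \Cref{fact:abstensorpair} and the quantifier-swapping principle, with the genuine difficulty concentrated in $\eqref{point:mainthmeqrel}\Rightarrow\eqref{point:mainthmwsmidw}$.
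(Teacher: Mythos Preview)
Your plan for $\eqref{point:mainthmwsmidw}\Rightarrow\eqref{point:mainthmees}\Rightarrow\eqref{point:mainthmeqrel}$ is fine and matches the paper. Your sketches for $\eqref{point:mainthmwsmidw}\Rightarrow\eqref{point:mainthmphi}$ and $\eqref{point:mainthmwsmidw}\Leftrightarrow\eqref{point:mainthmgcd}$ are also essentially right: if $D(w)\in w$ and $D(w)\subseteq D(u)$ then $D(u)\in w$, i.e.\ $w\smid u$, hence $w\tmid u$; and for $\eqref{point:mainthmwsmidw}\Rightarrow\eqref{point:mainthmgcd}$, given $a,b\models w$ one has $D(\tp(\gcd(a,b)/\mathbb Z))=D(w)\in w$, so $w\smid\tp(\gcd(a,b)/\mathbb Z)$, giving the desired $c$. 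Note that $\eqref{point:mainthmphi}\Rightarrow\eqref{point:mainthmwsmidw}$ is \emph{not} obtained by setting $u=w$ (you noticed this yourself); it is cleanest to route it through $\eqref{point:mainthmgcd}$ as the paper does, via the cycle $\eqref{point:mainthmphi}\Rightarrow\eqref{point:mainthmgcd}\Rightarrow\eqref{point:mainthmwsmidw}$.

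The genuine gap is $\eqref{point:mainthmeqrel}\Rightarrow\eqref{point:mainthmwsmidw}$. Your proposal here is not an argument: staring at a tensor triple $(a,b,c)\models w^{\otimes 3}$ and invoking transitivity on the differences $a-b$, $b-c$, $a-c$ yields nothing, because all instances of $\equiv_w$ you can write down this way (e.g.\ $w\equiv_w 0$, $w\equiv_w w$, $w\oplus w\equiv_w w$) are already true for trivial reasons (the relevant differences lie in $\mathrm{MAX}$, or $w\tmid -w$), regardless of whether $\equiv_w$ is transitive. The $t,t'$ trick of \Cref{thm:transimpsym} extracts a \emph{failure} of transitivity from a failure of symmetry; it does not produce divisibilities. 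What is missing is the key idea of the paper: one does not prove $\eqref{point:mainthmeqrel}\Rightarrow\eqref{point:mainthmwsmidw}$ directly, but rather $\eqref{point:mainthmeqrel}\Rightarrow\eqref{point:mainthmphi}$, and this step passes through the profinite completion $\hat{\mathbb Z}$. Once $\equiv_w$ is an equivalence relation, \Cref{lemma:equivwcongr} makes it a congruence, so $Z_w$ is closed under $\oplus$; then by \Cref{lemma:pitoZhat} the image $\pi(Z_w)$ is a \emph{closed subgroup} of $\hat{\mathbb Z}$. The structural input is \Cref{fact:sbgrpsofzhat}: every closed subgroup of $\hat{\mathbb Z}$ is cut out by divisibility conditions, i.e.\ has the form $\set{x:\forall n\in D\; n\mid x}$ for some $\mid$-downward-closed $D$. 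This forces membership in $\pi(Z_w)$---and hence, via \Cref{lemma:sdoop}, in $Z_w$ itself---to depend only on $D(u)$, which is exactly $\eqref{point:mainthmphi}$. Without this profinite/topological ingredient (or something of equivalent strength), I do not see how your combinatorial manoeuvre can close the cycle.
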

\begin{proof} In order to prove $\eqref{point:mainthmwsmidw}\IMP\eqref{point:mainthmees}$, we need to show that if $u\equiv_w v$ then $u\sequiv w v$, since the converse is always true. Let $d\models w$ and $(a,b)\models u\otimes v$ be such that $d\mid a-b$. Let $t\coloneqq \tp(d,a,b/\mathbb Z)$ and find, using saturation, some $d'$ such that $(d', (d,a,b))\models w \otimes t$. In particular, $(d', (a,b))$ is a tensor pair, hence by  associativity of $\otimes$, we have $(d',a,b)\models w\otimes u\otimes v$, therefore  $u\sequiv w v$ if and only if $d'\mid a-b$. But $(d', d)\models w\otimes w$, hence $d'\mid d$ by assumption and \Cref{rem:dfsmid}.

  The implication  $\eqref{point:mainthmees}\IMP \eqref{point:mainthmeqrel}$ is obvious because $\sequiv w$ is an equivalence relation by \Cref{fact:sequiveqrel}.

To prove $\eqref{point:mainthmeqrel}\IMP\eqref{point:mainthmphi}$,  assume that $\equiv_w$ is an equivalence relation. It is easy to see, for instance by using \Cref{rem:divchar}, that $w\tmid u\IMP D(w)\subseteq D(u)$  always holds,  so let us assume that $D(w)\subseteq D(u)$. We need to prove that $w\tmid u$ or, in other words, that $u\in Z_w$.
  We begin by observing that, for every $v$, $v'$, if $\pi(v)=\pi(v')$, then $v\sequiv w v'$ by \Cref{lemma:sdoop}, hence, since we are assuming $\equiv_w$ is an equivalence relation, $v\in Z_w\IFF v'\in Z_w$, so it suffices to show that $\pi(u)\in \pi(Z_w)$.   By \Cref{lemma:equivwcongr}, $\equiv_w$ is a congruence with respect to $\oplus$, thus $Z_w$ is closed under $\oplus$. By \Cref{lemma:pitoZhat}, $\pi(Z_w)$ is a closed subgroup of $(\hat{\mathbb Z}, +)$, therefore, by \Cref{fact:sbgrpsofzhat}, there is a $\mid$-downward closed $D\subseteq \mathbb Z$ such that $\pi(Z_w)=\set{x\in \hat {\mathbb Z}: \forall n\in D\; n \mid x}$. In other words, for every $v$, we have $\pi(v)\in \pi(Z_w)\IFF D\subseteq D(v)$. Trivially,  $\pi(w)\in \pi(Z_w)$, hence $D\subseteq D(w)$, and we conclude by using the assumption that $D(w)\subseteq D(u)$.

    For $\eqref{point:mainthmphi}\IMP\eqref{point:mainthmgcd}$, observe that if $n\tmid w$ then automatically $n\mid \gcd(a,b)$. Hence  $D(w)\subseteq D(\tp(\gcd(a,b)/\mathbb Z))$, and the conclusion follows.

    To prove $\eqref{point:mainthmgcd}\IMP\eqref{point:mainthmwsmidw}$, let $(a,b)\models w\otimes w$. By assumption, there is $c\models w$ such that $c\mid \gcd(a,b)$. This implies that $\abs c\le \abs a$, hence by \Cref{fact:abstensorpair} we have  $(c,b)\models w\otimes w$, witnessing self-divisibility. \end{proof}

\section{Examples}\label{sec:eg}
In this section we look at some examples and non-examples of self-divisible ultrafilters and see how this notion interacts with other fundamental classes of ultrafilters, such the idempotent or the minimal elements of the semigroups $(\beta \mathbb N, \oplus)$ and $(\beta \mathbb N, \odot)$.
We also define a special kind of self-divisible ultrafilters, the \emph{division-linear} ones, and look at the relation between the shape of $D(w)$ and self-divisibility of $w$.
\begin{example}\label{eg:longlist}\*
  \begin{enumerate}
  \item Clearly, every nonzero principal ultrafilter is self-divisible.
  \item \label{point:idempmin}Every ultrafilter in $\mathrm{MAX}$ is easily checked to be self-divisible. 
  \item Every  ultrafilter of the form $\tp(a/\mathbb Z)$, where $a> \mathbb Z$ is prime, is not self-divisible.
      \item   If $p_1,\ldots, p_n\in \mathbb P$, and $a_1,\ldots, a_n, b\in {}^{\ast} \mathbb N$, then  $\tp(p_1^{a_1}\cdot\ldots\cdot p_n^{a_n}\cdot b/\mathbb Z)$ is self-divisible if and only if $\tp(b/\mathbb Z)$ is.  This can be easily seen by using point~\ref{point:mainthmgcd} of \Cref{main:thm}.
      \item In particular, every ultrafilter of the form $\tp(p_1^{a_1}\cdot\ldots\cdot p_n^{a_n}/\mathbb Z)$ is self-divisible.
          \item  Self-divisible ultrafilters form a semigroup with respect to $\odot$.
        \item\label{point:odotminimal}
 By~\eqref{point:idempmin} above and \Cref{max:ideal}, all $\odot$-minimal ultrafilters are self-divisible.
      \item  By~\eqref{point:idempmin} above,  all $\oplus$-idempotent
    ultrafilters are self-divisible.
\item \label{point:minnotsd}  If $u\ne 0$ is a minimal $\oplus$-idempotent, then $u\oplus 1$ is $\oplus$-minimal, but not self-divisible, since $D(u\oplus 1)=\set{1, -1}$.
\end{enumerate}
\end{example}
We will see in \Cref{eg:odotidemp} that, in~\ref{point:minnotsd} above, the reverse inclusion does not hold either. In fact, $\oplus$-minimality is not even implied by the following  strengthening of self-divisibility.
\begin{definition}
An ultrafilter $w\in\beta\Z$ is \emph{division-linear} iff $w\ne 0$ and for every pair $d,d'$ of realisations of $w$ we have that $d\mid d'$ if and only if $\abs{d}\leq\abs{d'}$.
\end{definition}
\begin{remark}
By point~\ref{point:mainthmgcd} of \Cref{main:thm}, every division-linear ultrafilter is self-divisible.
\end{remark}
We already said that every nonzero principal ultrafilter is self-divisible; in fact, every such ultrafilter is division-linear. We now look at another example, then characterise division-linearity. For a similar characterisation of self-divisibility, see \Cref{thm:charred}.
\begin{example}\label{eg:odotidemp}
Every ultrafilter $u$ containing the set $F\coloneqq\{n!: n\in\N\}$ of  factorials is  division-linear. Because $F$ is not a  multiplicative IP set,  $u$ is not $\odot$-idempotent and, because $F$ is not piecewise syndetic, $u$ is not in $\overline{K(\beta\Z, \oplus)}$.
\end{example}
\begin{proposition}\label{pr:autodivlinord}
A nonzero $w\in \beta \mathbb Z$ is division-linear if and only if it contains a set linearly preordered by divisibility.   
\end{proposition}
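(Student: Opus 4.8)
The plan is to prove both directions directly from the definitions, the nonstandard characterisation of divisibility for realisations of $w$, and saturation of ${}^{\ast}\mathbb Z$. For the easy direction, suppose $w$ contains a set $S$ linearly preordered by divisibility, i.e.\ for all $m,n\in S$ either $m\mid n$ or $n\mid m$. I would take two realisations $d,d'\models w$; then $S\in\tp(d/\mathbb Z)$ and $S\in\tp(d'/\mathbb Z)$, so $d,d'\in{}^{\ast}S$, and by transfer of the statement ``$S$ is linearly preordered by $\mid$'' we get $d\mid d'$ or $d'\mid d$. This is not yet quite what division-linearity asks, since we also need the equivalence with $\abs d\le\abs{d'}$; the point is that on $S$ the preorder by divisibility must refine the order by absolute value in the sense that $m\mid n\IMP\abs m\le\abs n$ automatically (for nonzero integers), and conversely if $\abs d\le\abs{d'}$ then we cannot have $d'\mid d$ strictly unless $\abs{d'}\le\abs d$, so $d\mid d'$. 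One has to be slightly careful about the case $\abs d=\abs{d'}$, but since $w\ne 0$ and $w$ is nonprincipal-or-principal, realisations with equal absolute value are equal up to sign, and $d\mid{-d}$; I would handle principal $w$ as a trivial case up front.

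For the converse, suppose $w$ is division-linear; I want a set $S\in w$ linearly preordered by $\mid$. The natural candidate is built by a saturation/compactness argument: the defining property ``for every $d,d'\models w$, $d\mid d'$ or $d'\mid d$'' is exactly the statement that the type $w(x)\cup w(y)$ (in two variables over $\mathbb Z$) implies $x\mid y\vee y\mid x$. By compactness there is a single formula $S(x)\in w$, i.e.\ a set $S\in w$, such that $S(x)\wedge S(y)\vdash x\mid y\vee y\mid x$ — that is, $S$ is linearly preordered by divisibility (after transfer back to the standard model, where $S\subseteq\mathbb Z$). Concretely: if no such $S$ existed, then for every $S\in w$ the partial type $\{S(x),S(y)\}\cup\{\neg(x\mid y),\neg(y\mid x)\}$ would be consistent, and by saturation we could realise $\bigcup\{S(x),S(y):S\in w\}\cup\{\neg(x\mid y),\neg(y\mid x)\}$, producing $d,d'\models w$ with $d\nmid d'$ and $d'\nmid d$, contradicting division-linearity.

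I expect the main obstacle to be purely bookkeeping rather than conceptual: reconciling the ``$\mid$ or $\mid$'' dichotomy with the strict ``iff $\abs d\le\abs{d'}$'' clause in the definition of division-linear, and making sure the compactness argument is stated at the level of definable sets (which is legitimate since every subset of $\mathbb Z^k$ is a symbol in our language, so $S(x)\wedge S(y)\to x\mid y\vee y\mid x$ is a single first-order sentence, and ``$S$ is linearly preordered by $\mid$'' transfers down to $\mathbb Z$). A clean way to package the forward direction is to note that division-linearity says precisely that the realisations of $w$ are linearly preordered by $\mid$, then apply the standard fact (already used repeatedly in the paper, e.g.\ in the ``some existential quantifiers may be replaced by universal ones'' discussion and~\cite[Corollary~5.13]{LMON}) that such a uniform statement about all pairs of realisations is witnessed by a single set in $w$. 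With that fact in hand the proposition is essentially immediate, so I would spend most of the written proof on the absolute-value subtlety and on the principal case.
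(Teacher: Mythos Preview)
Your proposal is correct and follows essentially the same approach as the paper: transfer for one direction and compactness for the other. The only cosmetic difference is that the paper applies compactness directly to the formula $(\abs x\le\abs y)\to(x\mid y)$, obtaining $A,B\in w$ with $A\cap B$ linearly preordered, which sidesteps the separate absolute-value reconciliation you flagged; your equivalent route via $x\mid y\vee y\mid x$ works just as well once you observe (as you do) that for nonzero realisations the two formulations coincide.
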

\begin{proof}
   If $A\in w$ is linearly preordered by divisibility, then the conclusion follows by observing that whenever $d,d'\models w$ then $d,d'\in {}^{\ast}A$. Conversely, if we think of $w$ as a type, then $w$ is division-linear, by definition, if and only if $w(x)\cup w(y) \vdash (\abs x\le \abs y) \to (x\mid y)$. By compactness, there are $A, B\in w$ such that $(x\in {}^{\ast}A) \land (y\in {}^{\ast}B) \vdash(\abs x\le \abs y) \to (x\mid y)$. It follows that $A\cap B$ is linearly preordered by divisibility.
\end{proof}

\begin{proposition}
There is an ultrafilter in $\mathrm{MAX}$ (in particular, a self-divisible ultrafilter) which is not division-linear.
\end{proposition}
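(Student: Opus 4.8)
The plan is to construct the desired $w$ directly as an ultrafilter on $\Z$. Recall from \Cref{fact:maxsdiv} that a nonzero ultrafilter lies in $\mathrm{MAX}$ precisely when it contains $n\Z$ for every $n\in\N$, and from \Cref{pr:autodivlinord} that an ultrafilter fails to be division-linear precisely when it contains no set linearly preordered by divisibility. So it suffices to produce an ultrafilter containing every $n\Z$ but no $\mid$-linearly-preordered set: by \Cref{eg:longlist}\eqref{point:idempmin} such a $w$ is automatically self-divisible, hence will be a self-divisible (indeed $\mathrm{MAX}$) ultrafilter that is not division-linear. Writing $\mathcal L$ for the family of subsets of $\Z$ that are linearly preordered by $\mid$, I therefore need only check that
\[
\set{n\Z : n\in\N}\cup\set{A^\cc : A\in\mathcal L}
\]
has the finite intersection property, and then extend it to an ultrafilter.

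The one substantive point is that members of $\mathcal L$ are too sparse to cover any $n\Z$, even finitely many of them at once. Indeed, if $A\in\mathcal L$ and one enumerates $A\cap\Z_{>0}$ increasingly as $a_1<a_2<\cdots$, then $a_i\mid a_{i+1}$ for each $i$ (the alternative $a_{i+1}\mid a_i$ is ruled out by $0<a_i<a_{i+1}$), so $a_{i+1}\ge 2a_i$ and hence $a_i\ge 2^{i-1}$; the same applies to $(-A)\cap\Z_{>0}$, and $0$ contributes at most one more element, so $\abs{A\cap[-M,M]}=O(\log M)$. Consequently, given $n_1,\dots,n_k\in\N$ with least common multiple $L$ and given $A_1,\dots,A_m\in\mathcal L$, the set $L\Z\cap[-M,M]$ has roughly $2M/L$ elements while $(A_1\cup\dots\cup A_m)\cap[-M,M]$ has only $O(\log M)$ of them; hence
\[
n_1\Z\cap\dots\cap n_k\Z\cap A_1^\cc\cap\dots\cap A_m^\cc=L\Z\setminus(A_1\cup\dots\cup A_m)
\]
is infinite, in particular nonempty. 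This yields the finite intersection property, so the displayed family extends to an ultrafilter $w$.

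Finally I read off the conclusion. Since $n\Z\in w$ for all $n$, and since $w$ is nonprincipal — because $\set m^\cc\in w$ for every $m\in\Z$, each singleton being trivially in $\mathcal L$ — we get $w\in\beta\Z\setminus\set 0$ and then $w\in\mathrm{MAX}$ by \Cref{fact:maxsdiv}, hence $w$ is self-divisible by \Cref{eg:longlist}\eqref{point:idempmin}. On the other hand $A^\cc\in w$ for every $A\in\mathcal L$, so $w$ contains no member of $\mathcal L$, and \Cref{pr:autodivlinord} gives that $w$ is not division-linear, completing the proof. I do not anticipate a real obstacle here: the only content beyond bookkeeping is the elementary remark that a $\mid$-chain grows at least geometrically, which confines it to logarithmically many points in each interval and so prevents finite unions of such chains from absorbing the positive-density sets $n\Z$.
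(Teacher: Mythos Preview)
Your proof is correct and follows essentially the same route as the paper's: show that $\{n\Z:n\in\N\}\cup\{A^\cc:A\in\mathcal L\}$ has the finite intersection property and extend to an ultrafilter. The only difference is how the FIP is checked---you count (a $\mid$-chain has $O(\log M)$ points in $[-M,M]$ while $L\Z$ has linearly many), whereas the paper simply observes that each $n\Z$ contains an infinite $\mid$-antichain and hence cannot be covered by finitely many chains.
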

\begin{proof}
  Let $\mathcal L$ be the family of subsets of $\mathbb Z\setminus \set 0$ which are linearly preordered by divisibility. By \Cref{pr:autodivlinord}, it suffices to prove that the family $\mathcal F\coloneqq\set{n \mathbb Z : n>1}\cup \set{L^\cc : L\in \mathcal L}$ has the finite intersection property.  But this is clear, because $\set{n \mathbb Z : n>1}$ is closed under finite intersections and every $n \mathbb Z$ contains an infinite $\mid$-antichain, hence cannot be contained in a finite union of elements of $\mathcal L$. 
\end{proof}

The reverse inclusion also fails:

\begin{example}\label{eg:pow2}
If $u$ is the type of a nonstandard power of $2$, then it is division-linear, hence self-divisible, but not in $\mathrm{MAX}$.
\end{example}
\begin{example}
Self-divisibility is not preserved upwards nor downwards by $\tmid$. For instance, if  $v\in \mathrm{MAX}$, then for any non-self-divisible $w$ we have both  $w\tmid v$ and $w\smid v$ (\Cref{fact:maxsdiv}). In the other direction, fix infinite $a,b$ with $b$ prime and take $w\coloneqq\tp(2^a/\mathbb Z)$ and $v\coloneqq\tp(2^ab/\mathbb Z)$. It is easy to show that $w(x)\otimes v(y)\vdash x\mid y$, from which we deduce $w\smid v$, and in particular $w\tmid v$. By \Cref{eg:pow2} $w$ is division-linear, hence self-divisible, but $v$ is not. 
\end{example}
We saw in point~\eqref{point:odotminimal} of \Cref{eg:longlist} and in \Cref{eg:odotidemp} that $\odot$-minimal ultrafilters are self-divisible, and that division-linearity does not imply $\odot$-idempotency. Moreover, it is easily seen that every nonprincipal ultrafilter containing the set of primes is neither self-divisible, nor $\odot$-idempotent.
 \Cref{idemp:notdf}  and \Cref{co:nolinkodot} below complete the picture.
\begin{proposition}
  \label{idemp:notdf}  
There exist $\odot$-idempotent non-self-divisible ultrafilters.
\end{proposition}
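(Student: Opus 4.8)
The plan is to realise $w$ as an idempotent of a closed subsemigroup of $(\beta\Z,\odot)$ all of whose members avoid every standard prime, in the sense that no $p\in\mathbb P$ divides their realisations; this will force $D(w)=\set{-1,1}$ and hence $D(w)\notin w$, i.e.\ failure of self-divisibility.

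Concretely, write $\mathbb P=\set{p_1,p_2,\dots}$ and, for each $m\in\N$, put
\[
S_m\coloneqq\set{n\in\N : n\geq 2 \ \text{and}\ p_i\nmid n\ \text{for every}\ i\leq m}.
\]
First I would check the two facts that make the argument run. Each $S_m$ is closed under multiplication: if a prime divides neither $a$ nor $b$ it does not divide $ab$, and $a,b\geq 2$ gives $ab\geq 2$; hence $\overline{S_m}$ is a subsemigroup of $(\beta\Z,\odot)$. The $S_m$ are nested and each is nonempty (it contains $p_{m+1}$), so $\set{\overline{S_m}:m\in\N}$ is a family of nonempty closed subsets of the compact space $\beta\Z$ with the finite intersection property; therefore $T\coloneqq\bigcap_{m\in\N}\overline{S_m}$ is a nonempty closed subsemigroup of $(\beta\Z,\odot)$. (Equivalently, $T$ is the preimage of a suitable closed subsemigroup under the homomorphism $n\mapsto\set{p\in\mathbb P:p\mid n}$ from $(\N\setminus\set 1,\cdot)$ to $(\,\cup\text{-semigroup of finite subsets of }\mathbb P)$, but this viewpoint is not needed.)

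Since $(\beta\Z,\odot)$ is a compact right-topological semigroup, so is $T$; as every such semigroup contains an idempotent, we may pick an idempotent $w\in T$, which is $\odot$-idempotent by construction. It then remains to verify that $w$ is not self-divisible. For every $m$ we have $S_m\in w$, so if $a\models w$ then $p_i\nmid a$ in ${}^{\ast}\Z$ for every standard $i$; consequently, for any $n\in\Z$ with $\abs n\geq 2$ we have $n\Z\notin w$ (otherwise a prime factor $p_i$ of $n$ would divide $a$), and therefore $D(w)=\set{-1,1}$. Finally $w$ is nonprincipal, because $\bigcap_{m\in\N}S_m=\emptyset$ — every integer $\geq 2$ has a standard prime factor and so is excluded from some $S_m$ — so the finite set $\set{-1,1}$ is not in $w$; that is, $D(w)\notin w$, and $w$ is not self-divisible.

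I expect the only point that genuinely needs care is ensuring the extracted idempotent is nonprincipal: had the constant $1$ been allowed into the $S_m$, the principal ultrafilter $\tp(1)$ would be an idempotent of the corresponding subsemigroup, so imposing $n\geq 2$ from the outset is exactly what rules this out. Everything else — closure of $S_m$ under products, the finite intersection property, the existence of an idempotent in a compact right-topological semigroup, and the computation of $D(w)$ — is routine.
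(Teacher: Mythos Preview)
Your proof is correct and follows essentially the same route as the paper: both isolate a closed $\odot$-subsemigroup of $\mathbb N$-free ultrafilters (the paper uses $\mathrm{Free}\setminus\set{1,-1}=\bigcap_{n>1}\overline{(n\Z)^\cc}\setminus\set{1,-1}$, you use $\bigcap_m\overline{S_m}$, which is the part of that set concentrated on $\set{n\ge 2}$), invoke Ellis' Lemma to extract a $\odot$-idempotent, and conclude from $D(w)=\set{-1,1}\notin w$. The only cosmetic difference is that you bake the exclusion of $\pm 1$ into the $S_m$ from the start, whereas the paper removes these principal idempotents after the fact.
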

\begin{proof}
Recall that an ultrafilter is \emph{$\N$-free} iff it is not divisible by any $n>1$, see~\cite{sobot_congruence_2021} and references therein. Denote the set of $\N$-free ultrafilters by $\mathrm{Free}\coloneqq\bigcap_{n>1}\overline{n\Z^\cc}$. Let us show that this set is closed under $\odot$. Indeed, $w\in\mathrm{Free}$ if and only if for every $a\models w$ and for every $n>1$  we have $n\nmid a$. If $(a, b)\models w\otimes v$ and $n\mid a\cdot b$, then every prime divisor of $n$ must divide either $a$ or $b$, which is a contradiction because $n>1$. From this it follows easily that $\mathrm{Free}\setminus \set{1, -1}$ is  a closed subset of $\beta\Z$ closed under $\odot$.  It is therefore a compact right topological semigroup, and by Ellis' Lemma it must contain a $\odot$-idempotent. To conclude, note that $\mathrm{Free}$ does not contain any nonprincipal self-divisible ultrafilters, since every $w\in\mathrm{Free}$ has $D(w)=\set{1,-1}$.
\end{proof}
\begin{lemma}\label{lemma:dlnoapl3}
  If $A\subseteq \mathbb N$ is linearly ordered by divisibility, then $A$ contains no arithmetic progression of length $3$.
\end{lemma}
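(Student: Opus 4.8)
The plan is to argue by contradiction, exploiting that on a subset of $\mathbb{N}$ which is linearly ordered by divisibility the divisibility order and the usual order coincide. Suppose $A\subseteq\mathbb{N}$ is linearly ordered by divisibility and contains a $3$-term arithmetic progression; discarding the degenerate constant progression (which is a single element of $A$, and is irrelevant for the applications of the lemma), we may take three distinct elements $a<b<c$ of $A$ with $a+c=2b$. The first step is the observation that for $x,y\in A$ with $x<y$ we must have $x\mid y$: by linearity either $x\mid y$ or $y\mid x$, and in the second case $y\le x$, a contradiction. In particular $a\mid b$ and $b\mid c$.

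The second step is a short divisibility computation. Since $a+c=2b$ we have $b\mid a+c$, and since $b\mid c$ this gives $b\mid a$. Together with $a\mid b$ and $a,b>0$ this forces $a=b$, contradicting $a<b$. (Equivalently, one can write $b=ka$, $c=mb$ with $k,m\ge 2$ from the strict inequalities, substitute into $a+c=2b$ to get $k(m-2)=-1$, and note the left-hand side is nonnegative — either route works.)

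There is no substantial obstacle here: the argument is entirely elementary. The only point requiring a little care is the convention on what counts as an arithmetic progression of length $3$, i.e.\ whether a constant triple is admitted; this is immaterial, since such a triple collapses to a single point of $A$ and the lemma is only ever invoked for genuine, nondegenerate progressions, so the nondegenerate case treated above suffices.
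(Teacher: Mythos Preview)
Your proof is correct and follows essentially the same approach as the paper's: both argue by contradiction, use that on a $\mid$-chain in $\mathbb N$ the divisibility and usual orders agree, and then extract a contradictory divisibility from the middle term. The only cosmetic difference is the final line of arithmetic: the paper writes the progression as $a,a+b,a+2b$, observes $(a+b)\mid(a+2b)$ forces $(a+b)\mid b$, and stops since $a+b>b$; your route ($b\mid c$ and $b\mid a+c$ hence $b\mid a$) is the same subtraction in different notation.
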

\begin{proof}
A counterexample $a,a+b, a+2b\in A$ should satisfy $(a+b)\mid (a+b)+b$, hence $(a+b)\mid b$, a contradiction.
\end{proof}
The following fact is well-know,  easy to prove, and a special case of the much more general~\cite[Example~5.6]{LMON}.
\begin{fact}\label{fact:apl3ideal}
The set of ultrafilters $u$ such that every element of $u$ contains an arithmetic progression of length $3$  is a closed bilateral ideal of $(\beta \mathbb N, \odot)$.
\end{fact}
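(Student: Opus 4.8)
The plan is to unwind the definitions. Write $\mathcal{A}$ for the set of all $u\in\beta\mathbb{N}$ such that every member of $u$ contains a $3$-term arithmetic progression; call a subset of $\mathbb{N}$ \emph{$\mathrm{AP}_3$-rich} if it contains such a progression (with nonzero common difference), and \emph{$\mathrm{AP}_3$-free} otherwise. We must show that $\mathcal{A}$ is nonempty, closed, and a bilateral ideal of $(\beta\mathbb{N},\odot)$.

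For closedness, note that $u\in\mathcal{A}$ if and only if $u$ contains no $\mathrm{AP}_3$-free set, i.e.\ if and only if $\mathbb{N}\setminus B\in u$ for every $\mathrm{AP}_3$-free $B\subseteq\mathbb{N}$. Hence $\mathcal{A}=\bigcap_{B}\overline{\mathbb{N}\setminus B}$ is an intersection of clopen subsets of $\beta\mathbb{N}$, so $\mathcal{A}$ is closed. For nonemptiness it suffices that the family $\set{\mathbb{N}\setminus B : B\subseteq\mathbb{N}\text{ is }\mathrm{AP}_3\text{-free}}$ has the finite intersection property, since any ultrafilter extending it lies in $\mathcal{A}$; and this is precisely van der Waerden's theorem for progressions of length $3$, namely that no finite union of $\mathrm{AP}_3$-free sets equals $\mathbb{N}$. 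This is the only nontrivial external input.

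For the ideal property, fix $u\in\mathcal{A}$ and $v\in\beta\mathbb{N}$. To see that $u\odot v\in\mathcal{A}$, let $C\in u\odot v$, so that $B\coloneqq\set{x\in\mathbb{N} : \set{y : xy\in C}\in v}$ belongs to $u$; pick a progression $\set{s,s+d,s+2d}\subseteq B$ with $d>0$. Each of $\set{y : sy\in C}$, $\set{y : (s+d)y\in C}$ and $\set{y : (s+2d)y\in C}$ lies in $v$, hence so does their intersection, and any $t$ in it gives $\set{st,(s+d)t,(s+2d)t}\subseteq C$, an arithmetic progression with common difference $dt>0$; thus every member of $u\odot v$ is $\mathrm{AP}_3$-rich. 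To see that $v\odot u\in\mathcal{A}$, let $C\in v\odot u$, so that $\set{x : \set{y : xy\in C}\in u}$ is a nonempty member of $v$; fixing $x$ in it, the set $\set{y : xy\in C}$ belongs to $u$ and so contains a progression $\set{s,s+d,s+2d}$, whence $\set{xs,x(s+d),x(s+2d)}\subseteq C$ is a progression with common difference $xd>0$. Therefore $\mathcal{A}\odot\beta\mathbb{N}\subseteq\mathcal{A}$ and $\beta\mathbb{N}\odot\mathcal{A}\subseteq\mathcal{A}$, as required.

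I do not expect a genuinely hard step; the only point deserving care is the simultaneous choice of a single $t$ in the $u\odot v$ case, which works because $v$ is closed under finite intersections and only three conditions are imposed. The same computation can alternatively be run in the iterated-hyperextension framework of \Cref{sec:prelim}: one checks that $u\in\mathcal{A}$ holds exactly when some $c\models u$ admits $e\in{}^{\ast}\mathbb{N}$ with $e>0$ and $c$, $c+e$, $c+2e$ all realising $u$ (the nontrivial implication being a finite-satisfiability argument), and then, for $b\models v$, the triples $\bigl(c\cdot{}^{\ast}b,\,(c+e)\cdot{}^{\ast}b,\,(c+2e)\cdot{}^{\ast}b\bigr)$ and $\bigl(b\cdot{}^{\ast}c,\,b\cdot{}^{\ast}(c+e),\,b\cdot{}^{\ast}(c+2e)\bigr)$ realise $u\odot v$ and $v\odot u$ respectively and are arithmetic progressions with nonzero common difference.
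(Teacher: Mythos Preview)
Your argument is correct. The paper does not actually supply a proof of this fact: it merely records it as well known and points to \cite[Example~5.6]{LMON} for a more general statement, so there is no ``paper's proof'' to compare with. Your direct verification---closedness via $\mathcal{A}=\bigcap_B\overline{\mathbb{N}\setminus B}$, nonemptiness via van der Waerden, and the two ideal inclusions by unwinding the definition of $\odot$---is exactly the kind of elementary check the paper has in mind when calling the fact ``easy to prove''. The alternative nonstandard sketch at the end is also sound; the compactness/saturation step you flag is indeed the only place needing a word of justification, and you have identified it correctly.
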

\begin{corollary}\label{co:nolinkodot}
  There is no nonzero, division-linear ultrafilter in $\overline{K(\beta\N, \odot)}$.
\end{corollary}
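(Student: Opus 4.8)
The plan is to combine \Cref{lemma:dlnoapl3} with \Cref{fact:apl3ideal}, using the fact (established in \Cref{co:nolinkodot}'s preceding discussion) that a division-linear ultrafilter contains a set linearly preordered by divisibility (\Cref{pr:autodivlinord}). First I would note that \Cref{co:nolinkodot} will follow from a slightly stronger statement: no nonzero division-linear ultrafilter lies in the closed bilateral ideal of $(\beta\mathbb N,\odot)$ consisting of those $u$ all of whose members contain a $3$-term arithmetic progression. Since $\overline{K(\beta\mathbb N,\odot)}$ is contained in every closed bilateral ideal, this suffices.

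So suppose, towards a contradiction, that $w$ is a nonzero division-linear ultrafilter lying in that ideal. Applying \Cref{pr:autodivlinord}, fix $A\in w$ linearly preordered by divisibility. A small wrinkle is that $A\subseteq\mathbb Z$ and is only \emph{preordered} (not ordered) by divisibility, whereas \Cref{lemma:dlnoapl3} is stated for $A\subseteq\mathbb N$ linearly ordered by divisibility; I would handle this by replacing $A$ with $A\cap\mathbb N$ (still in $w$, since $w\in\beta\mathbb N$) and observing that two natural numbers dividing each other are equal, so on $A\cap\mathbb N$ the preorder is a genuine linear order. Now, because $w$ lies in the ideal of \Cref{fact:apl3ideal}, the set $A\cap\mathbb N\in w$ must contain a $3$-term arithmetic progression — contradicting \Cref{lemma:dlnoapl3}.

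The argument is short and I do not anticipate a genuine obstacle; the only point requiring a moment's care is the $\mathbb Z$-versus-$\mathbb N$ and preorder-versus-order bookkeeping described above, which is routine. One could alternatively phrase the whole thing without passing through \Cref{fact:apl3ideal} by using \Cref{fact:sbgrpsofzhat}-style closure directly, but invoking the ideal is the cleanest route given what has already been set up.
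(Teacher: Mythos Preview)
Your proposal is correct and follows essentially the same route as the paper: invoke \Cref{pr:autodivlinord} to get a set linearly (pre)ordered by divisibility, use \Cref{lemma:dlnoapl3} to conclude it has no $3$-term arithmetic progression, and derive a contradiction from \Cref{fact:apl3ideal}. The paper compresses your $\mathbb Z$-versus-$\mathbb N$ bookkeeping into the phrase ``the fact that we are working over $\mathbb N$'', and leaves the step ``$\overline{K(\beta\mathbb N,\odot)}$ is contained in every closed bilateral ideal'' implicit, but the argument is the same. (Your aside about an alternative via \Cref{fact:sbgrpsofzhat} is a non sequitur---that fact concerns closed subgroups of $\hat{\mathbb Z}$ and plays no role here---but this does not affect the main line.)
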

We thank the referee for catching a mistake in a previous version of the proof below.
\begin{proof}
If   $u\in \beta \mathbb N$  is a counterexample, by \Cref{pr:autodivlinord} and the fact that we are working over $\mathbb N$, some $A\in u$  is linearly ordered by divisibility, hence by \Cref{lemma:dlnoapl3} it contains no arithmetic progression of length $3$. This contradicts $u\in \overline{K(\beta\N, \odot)}$ by \Cref{fact:apl3ideal}.
\end{proof}

 Self-divisible ultrafilters are not closed under $\oplus$: it suffices to sum any $u\in \mathrm{MAX}$ with $v=1$. In fact, it is possible to construct a counterexample with both $u,v$ nonprincipal.
      \begin{proposition}
        There are division-linear, nonprincipal ultrafilters $u,v$ such that $u\oplus v$ is not self-divisible.
      \end{proposition}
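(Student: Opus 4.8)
The plan is to realise $u\oplus v$ as the type of a sum of two nonstandard powers of $2$ with highly divisible exponents: every standard divisor of such a sum will turn out to be again a power of $2$, whereas the sum itself is not, so that $D(u\oplus v)\notin u\oplus v$. Concretely, I would fix a nonstandard $M\in{}^{\ast}\N$, set $N\coloneqq M!$ (so that $n!\mid N$ for every standard $n$), and take $u\coloneqq v\coloneqq\tp(2^{N}/\Z)$. Since $\set{2^{k}:k\in\omega}\in u$ is linearly preordered by divisibility, $u$ is division-linear by \Cref{pr:autodivlinord}, and it is clearly nonprincipal; so it remains only to show that $w\coloneqq u\oplus v=u\oplus u$ is not self-divisible.

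The next step is to describe a realisation of $w$. Pick $(a,b)\models u\otimes u$ with $a=2^{N}$; then $b=2^{N'}$ for some $N'$. On one hand $b\models u$ gives $\tp(N'/\Z)=\tp(N/\Z)$ (apply the definable function extracting the exponent), hence $n!\mid N'$ for every standard $n$. On the other hand, applying \Cref{fact:abstensorpair} to the functions $x\mapsto\lfloor x/2^{k}\rfloor$ yields $N'\ge N+k$ for every standard $k$; so $N'-N$ is a \emph{positive nonstandard} integer, and it is divisible by every standard $n!$. From the latter, for every standard odd $m>1$ we get $\mathrm{ord}_{m}(2)\mid N'-N$, hence $2^{N'-N}\equiv 1\pmod m$ and so $1+2^{N'-N}\equiv 2\pmod m$; in particular $\gcd(1+2^{N'-N},m)=1$. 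Also, the $2$-adic valuation of $a+b=2^{N}\bigl(1+2^{N'-N}\bigr)$ equals $N$, which is nonstandard.

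Combining these two observations, a positive standard integer $n=2^{j}m$ with $m$ odd divides $a+b$ exactly when $j\le N$ (automatic) and $m\mid 1+2^{N'-N}$, i.e.\ exactly when $m=1$; hence $D(w)=\set{\pm 2^{j}:j\in\omega}$. Finally, since $N'-N\ge 1$, the hyperinteger $1+2^{N'-N}$ is odd and $\ge 3$, so $a+b=2^{N}(1+2^{N'-N})$ is not of the form $\pm 2^{j}$; thus $a+b\notin{}^{\ast}D(w)$, that is, $D(w)\notin w$, and $u\oplus v$ is not self-divisible.

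The point I expect to require the most care is the two-sided control on the exponent gap $N'-N$: it must be nonstandard (this is what prevents $1+2^{N'-N}$ from being a power of $2$, hence what defeats self-divisibility) and simultaneously divisible by every standard factorial (this is what kills all odd standard divisors). The first is a routine consequence of Puritz' characterisation \Cref{fact:abstensorpair} (equivalently, of the definition of the definable extension $u\mid\Z a$), and the second follows automatically from the choice $N=M!$ once one notes that $b\models u$ forces $\tp(N'/\Z)=\tp(N/\Z)$.
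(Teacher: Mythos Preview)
Your argument is correct, and it takes a genuinely different route from the paper's. The paper picks two distinct ultrafilters: $u$ is the type of the product of the first $a$ even-indexed primes and $v$ that of the first $a$ odd-indexed primes (for some nonstandard $a$). Since every standard prime divides exactly one of the two, $D(u\oplus v)=\set{1,-1}$, and non-self-divisibility is immediate because $u\oplus v$ is nonprincipal. This is shorter and avoids any analysis of tensor realisations.

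Your construction, by contrast, takes $u=v=\tp(2^{M!}/\Z)$ and works out $D(u\oplus u)$ explicitly as the powers of $2$, using Puritz' criterion to force the exponent gap $N'-N$ to be infinite and the choice $N=M!$ to kill all odd standard divisors of $1+2^{N'-N}$. This requires more work, but it yields a sharper conclusion: a \emph{single} division-linear nonprincipal $u$ with $u\oplus u$ not self-divisible, whereas the paper's $u$ and $v$ are necessarily distinct. Both proofs rely on \Cref{pr:autodivlinord} for division-linearity; the trade-off is brevity versus the extra information that even $u\oplus u$ can fail.
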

      \begin{proof}
Let $f\from \mathbb N\to \mathbb P$ be the increasing enumeration of all primes, and fix $a\in {}^{\ast}\mathbb Z$ such that $a> \mathbb Z$. Let $u\coloneqq \tp(\prod_{0<b\le a}{}^{\ast}f(2b)/\mathbb Z)$ and  $v\coloneqq \tp(\prod_{0<b\le a}{}^{\ast}f(2b-1)/\mathbb Z)$. It is easy to see that $u$, $v$ are division-linear. Because every $p\in \mathbb P$ divides precisely one between $u$ and $v$, we have $D(u\oplus v)=\set{1,-1}$. Since $u\oplus v$  is nonprincipal, it cannot be self-divisible.
      \end{proof}

Point \ref{point:mainthmphi} of \Cref{main:thm} might suggest that self-divisibility of $w$ could be deduced just by looking at $D(w)$. Rather surprisingly, this is false, except in some trivial cases. To prove this, it will be convenient to replace $D(w)$ by the following function on the set  $\mathbb{P}$ of prime natural numbers.  The reader familiar with algebra may recognise that such functions are exactly the same as \emph{supernatural numbers} in the sense of Steinitz.
\begin{definition}
    Given $u\in \beta\Z$, we define $\varphi_u\from \mathbb{P}\to \omega+1$  as the function sending $p$ to $\max\{k\in \omega: p^k\Z\in u\}$ if this exists, and to $\omega$ otherwise.
  \end{definition}
  \begin{remark}\*
    \begin{enumerate}
  \item By definition,
for every $u\in \beta\Z$, the set $D(u)$ determines $\varphi_u$, and conversely. More explicitly, recalling our convention that  $p^{\omega+1}\Z=\set 0$, we have
\[
  D(u)=\bigcap_{p\in \mathbb{P}}(p^{\varphi_u(p)+1}\Z)^\cc.
\]
\item  The set 
\begin{equation}\label{eq:duc}\tag{$\dagger$}
  D(u)^\cc=\{k: k\Z \not\in u\}=\bigcup_{p\in \mathbb{P}}p^{\varphi_u(p)+1}\Z
\end{equation}
is $\mid$-upward closed.
  \end{enumerate}
\end{remark}

\begin{definition} Let $\varphi\from \mathbb{P}\to \omega+1$

    We say that $\varphi$ is \emph{finite} iff $\varphi\inverse(\set{\omega})=\emptyset$ and $\varphi\inverse(\mathbb N)$ is finite.

    We say that $\varphi$ is \emph{cofinite} iff $\varphi\inverse(\set 0)\cup\varphi\inverse(\mathbb N)$ is finite, that is, $\varphi\inverse(\set{\omega})$ is cofinite.
\end{definition}
Intuitively, $\phi_u$ is finite whenever $u$ is only divisible by finitely many integers, and $\phi_u$ is cofinite whenever, for all but finitely many $p\in \mathbb P$, the ultrafilter $u$ is divisible by every power of $p$.
\begin{proposition}\label{pr:phidf}Let $\varphi\from \mathbb{P}\to\omega+1$.
    \begin{enumerate}
        \item If $\varphi$ is finite, then every $w\in \beta\Z$ such that $\varphi_w=\varphi$ is either principal or not self-divisible.
        \item If $\varphi$ is cofinite, then every $w\in \beta\Z$ such that $\varphi_w=\varphi$ is self-divisible.
        \item In every other case, namely whenever $\varphi$ is not finite neither cofinite, there exist $u, v\in \beta\Z$ such that $\varphi_u=\varphi_v=\varphi$ with $u$ self-divisible and $v$ not self-divisible.
    \end{enumerate}
\end{proposition}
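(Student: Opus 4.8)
The plan is to reduce all three parts to properties of a single standard set. By~\eqref{eq:duc}, for any $w$ with $\varphi_w=\varphi$ the complement $D(w)^\cc=\bigcup_{p\in\mathbb P}p^{\varphi(p)+1}\Z$ (with the convention $p^{\omega+1}\Z=\set 0$) depends only on $\varphi$; write $B_\varphi$ for it. Then $D(w)=B_\varphi^\cc$ for every such $w$, and $w$ is self-divisible if and only if $B_\varphi^\cc\in w$, equivalently $B_\varphi\notin w$. So the question becomes precisely whether the fixed set $B_\varphi$ can, must, or cannot belong to an ultrafilter realising the profile $\varphi$.

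For~(1): if $\varphi$ is finite, with $\varphi^{-1}(\mathbb N)=\set{p_1,\dots,p_k}$, then an integer lies in $B_\varphi^\cc$ only if it is divisible by no prime outside $\set{p_1,\dots,p_k}$ and, for each $i$, by $p_i$ to a power at most $\varphi(p_i)$; hence $B_\varphi^\cc\subseteq\set{\pm p_1^{a_1}\cdots p_k^{a_k}: 0\le a_i\le\varphi(p_i)}$ is finite, so a self-divisible $w$ would have the finite set $D(w)=B_\varphi^\cc$ as a member, forcing $w$ principal. For~(2): if $\varphi$ is cofinite, only finitely many primes $p_1,\dots,p_k$ satisfy $\varphi(p_i)<\omega$, so $B_\varphi=\set 0\cup\bigcup_{i=1}^k p_i^{\varphi(p_i)+1}\Z$; if this belonged to a nonzero $w$ with $\varphi_w=\varphi$ then, $w$ being an ultrafilter, either $\set 0\in w$ (impossible) or some $p_i^{\varphi(p_i)+1}\Z\in w$, contradicting $\varphi_w(p_i)=\varphi(p_i)$. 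Hence $B_\varphi\notin w$ and $w$ is self-divisible. (Restricting to $w\ne 0$ is harmless since self-divisibility is only defined there; it matters only in the degenerate case where $\varphi$ is identically $\omega$.)

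The content is in~(3). I would first fix the filter base
\[
  \mathcal G_\varphi:=\set{p^k\Z: p\in\mathbb P,\ k\le\varphi(p)}\cup\set{(p^k\Z)^\cc: p\in\mathbb P,\ \varphi(p)<k<\omega}
\]
and check that any ultrafilter $w\supseteq\mathcal G_\varphi$ has $\varphi_w=\varphi$: for $p$ with $\varphi(p)<\omega$ the member $(p^{\varphi(p)+1}\Z)^\cc$ caps the valuation, while for $\varphi(p)=\omega$ every $p^k\Z$ lies in $w$. Then I would verify the finite intersection property of $\mathcal G_\varphi\cup\set{B_\varphi^\cc}$ and of $\mathcal G_\varphi\cup\set{B_\varphi}$ separately. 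Given a finite subfamily mentioning primes $q_1,\dots,q_m$, the integer $a_0:=\prod_j q_j^{c_j}$, where $c_j=\varphi(q_j)$ if this is finite and otherwise the largest exponent demanded, lies in all the $\mathcal G_\varphi$-members and automatically in $B_\varphi^\cc$; this settles the first family, and works for \emph{every} $\varphi$. For the second family, using that $\varphi$ is \emph{not} cofinite one picks a prime $r\notin\set{q_1,\dots,q_m}$ with $\varphi(r)<\omega$ and replaces $a_0$ by $a_0\cdot r^{\varphi(r)+1}$, which still meets the $\mathcal G_\varphi$-constraints and now lies in $r^{\varphi(r)+1}\Z\subseteq B_\varphi$. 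Extending the two bases to ultrafilters $u\supseteq\mathcal G_\varphi\cup\set{B_\varphi^\cc}$ and $v\supseteq\mathcal G_\varphi\cup\set{B_\varphi}$, both have profile $\varphi$; since $\varphi$ is neither finite (which excludes $\tp(n/\Z)$ for $n\ne 0$) nor cofinite (which excludes $\tp(0/\Z)$, whose profile is identically $\omega$), $u$ and $v$ are nonprincipal, hence nonzero. Finally $D(u)=B_\varphi^\cc\in u$ makes $u$ self-divisible, whereas $B_\varphi\in v$ gives $D(v)=B_\varphi^\cc\notin v$, so $v$ is not.

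The step I expect to be most delicate is the construction of the non-self-divisible $v$ in~(3). One needs a realisation divisible by $p^{\varphi(p)+1}$ for \emph{some} prime $p$ with $\varphi(p)<\omega$, even though divisibility by $q^{\varphi(q)+1}$ for any \emph{fixed} standard such $q$ is exactly what $\varphi_v(q)=\varphi(q)$ prohibits. Resolving this by arguing with finite subfamilies and re-choosing the offending prime outside each one is precisely where the hypothesis that infinitely many primes have finite $\varphi$-value (i.e.\ that $\varphi$ is not cofinite) is used, and it is the only point that genuinely requires care; everything else is bookkeeping around $D(w)$ and $\varphi_w$.
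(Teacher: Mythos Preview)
Your proof is correct and follows essentially the same strategy as the paper's: reduce everything to whether the fixed set $B_\varphi=D(w)^\cc$ can, must, or cannot be avoided by an ultrafilter with the prescribed profile, and for part~(3) verify the finite intersection property of the profile-enforcing family together with $B_\varphi$ or $B_\varphi^\cc$. Your argument is in fact slightly more streamlined than the paper's in part~(3): the paper splits into the cases $\varphi^{-1}(\mathbb N)$ infinite versus $\varphi^{-1}(\set 0)$ infinite, whereas you handle both at once by picking any prime $r$ with $\varphi(r)<\omega$ outside the finitely many mentioned ones and multiplying by $r^{\varphi(r)+1}$. You are also more explicit than the paper about why $u,v\ne 0$ (needed for self-divisibility to be defined), though this is implicit in the paper as well since $\varphi_0\equiv\omega$ is cofinite.
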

\begin{proof} The first point is immediate from the fact that, if $\varphi_w$ is finite, then $D(w)$ is finite. As for the second point, if $\varphi_w$ is cofinite then, by definition, for every $p$ outside of a certain finite $P_0\subseteq \mathbb P$ we have  $p^{\varphi(p)+1}\Z=\set 0$. In other words, the union in~$\eqref{eq:duc}$ is actually a finite union, namely  \[D(w)^\cc=\bigcup_{p\in P_0}p^{\varphi(p)+1}\Z.\] By definition of $\varphi$, this is a finite union of sets not in $w$, hence does not belong to $w$.

    Towards the last point, define
  \begin{align*}
    \mathcal{F}&\coloneqq \{p^{\varphi(p)}\Z\cap (p^{\varphi(p)+1}\Z)^\cc:p\in \varphi\inverse(\mathbb N)\}\\
               &\cup\{(p\Z)^\cc: p\in \varphi\inverse(\set 0)\}\\
               &\cup\{p^k\Z: k\in \N, p\in \varphi\inverse(\set{\omega})\}.
  \end{align*}
 Every ultrafilter $w$ extending $\mathcal F$ will have $D(w)=(\bigcup_{p\in \mathbb{P}}p^{\varphi(p)+1}\Z)^\cc$, so we need to prove that, if $\varphi$ is not finite nor cofinite, then the families below have the finite intersection property
\[
\mathcal F\cup\set[\Big]{\bigcup_{p\in \mathbb{P}}p^{\varphi(p)+1}\Z};
\qquad \mathcal F \cup\set[\Big]{\Bigl(\bigcup_{p\in \mathbb{P}}p^{\varphi(p)+1}\Z\Bigr)^\cc}.
\]
If $\mathcal I\subseteq \mathcal F$ is finite, then its intersection may be written as follows, for some $p_1, \ldots, p_k\in\varphi\inverse(\mathbb N)$, some $q_1, \ldots, q_s\in\varphi\inverse(\set 0)$, some $r_1, \ldots, r_\ell\in \varphi\inverse(\set{\omega})$, and some $n_1, \ldots, n_\ell\in \N$:
  \begin{align*}
\bigcap    \mathcal{I}&= p_1^{\varphi(p_1)}\Z\cap\ldots\cap p_k^{\varphi(p_k)}\Z\cap (p_1^{\varphi(p_1)+1}\Z)^\cc\cap\ldots\cap (p_k^{\varphi(p_k)+1}\Z)^\cc\\
               &\cap(q_1\Z)^\cc\cap\ldots\cap(q_s\Z)^\cc\\
    &\cap r_1^{n_1}\Z\cap\ldots\cap r_\ell^{n_\ell}\Z.
  \end{align*}
  Define   \(
    a\coloneqq p_1^{\varphi(p_1)}\cdot\ldots\cdot p_k^{\varphi(p_k)}\cdot r_1^{n_1}\cdot\ldots\cdot r_\ell^{n_\ell}
  \)
and observe that it belongs to $\bigcap\mathcal I$. By definition, if  $\varphi\inverse(\mathbb N)$ is infinite, then automatically $\phi$ is neither finite nor cofinite. Infinity of $\varphi\inverse(\mathbb N)$ gives us some  $p_\dagger\in \varphi\inverse(\mathbb N)\setminus\{p_1, \ldots, p_k\}$, so we get that  $a\cdot p_\dagger^{\varphi(p_\dagger)+1}\in\bigcap\mathcal{I}\cap \bigcup_{p\in \mathbb{P}}p^{\varphi(p)+1}\Z$, and that
  $a\cdot p_\dagger\in\bigcap\mathcal{I}\cap \left(\bigcup_{p\in \mathbb{P}}p^{\varphi(p)+1}\Z\right)^\cc$.

If instead $\varphi\inverse(\mathbb N)$ is finite then, because we are assuming that $\phi$ is not cofinite, the set $\varphi\inverse(\set 0)$ is infinite. Let $a$ be as above. If  $q\in \varphi\inverse(\set 0)\setminus\{q_1, \ldots, q_s\}$, then $a\cdot q\in \bigcap\mathcal{I}\cap \bigcup_{p\in \mathbb{P}}p^{\varphi(p)+1}\Z$. Moreover, $a\in\mathcal{I}\cap\left(\bigcup_{p\in \mathbb{P}}p^{\varphi(p)+1}\Z\right)^\cc$, and we are done.
      \end{proof}
      \section{A bit of topology}\label{sec:topology}
In this section we study the topological properties of the subspaces of self-divisible and of division-linear ultrafilters. We begin with an easy remark.
      \begin{remark}
  For every $u\in \beta \mathbb Z$ we have the following.
  \begin{enumerate}
  \item \cite[Lemma~1.3]{sobot_divisibility_2020} The set $\set{w: w\tmid u}$ is closed: it coincides with \[\bigcap \set{\overline{B}: B\in u, B \text { is $\mid$-downward closed}}.\] This follows from \Cref{rem:divchar} and the fact that $A$ is $\mid$-upward closed if and only if $A^\cc$ is $\mid$-downward closed.
  \item The set $\set{w: w\smid u}$ is clopen: it coincides with $\overline{D(u)}$.
\end{enumerate}
\end{remark}
The set of division-linear ultrafilters contains all the nonzero principal ultrafilters, and it follows that its closure is $\beta \mathbb Z\setminus \set{0}$. Therefore, we look at the topological properties of self-divisible and division-linear ultrafilters in the subspace of nonprincipal ultrafilters.
\begin{definition}
Let $\DF$ denote the set of self-divisible nonprincipal ultrafilters.  Similarly, denote by $\DL$ the set of division-linear  nonprincipal ultrafilters. Let $\overline{\DF}, \overline{\DL}$ be their topological closures in $\beta\Z\setminus \mathbb Z$.
\end{definition}
\begin{proposition}\label{clos:df}We have 
\(
\overline{\DF}=\overline{\DL}=\{w\in\beta\Z: \forall A\in w\;\exists X\subseteq A\;( X\text{ is an infinite $\mid$-chain})\}.
\)
\end{proposition}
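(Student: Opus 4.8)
The plan is to establish the two inclusions $C\subseteq\overline{\DL}$ and $\overline{\DF}\subseteq C$, where $C$ denotes the set on the right-hand side, and to combine them with the trivial inclusion $\overline{\DL}\subseteq\overline{\DF}$ --- valid because $\DL\subseteq\DF$, every division-linear ultrafilter being self-divisible --- so as to obtain the chain $C\subseteq\overline{\DL}\subseteq\overline{\DF}\subseteq C$ and hence equality throughout. First I would record that $C\subseteq\beta\Z\setminus\Z$: for principal $n$ the singleton $\set n$ lies in $n$ and contains no infinite $\mid$-chain, so the statement makes sense with closures taken in $\beta\Z\setminus\Z$. The only tool needed beyond the definitions is that, for $w\in\beta\Z\setminus\Z$, the clopen sets $\overline A$ with $A\in w$ form a neighbourhood basis of $w$.

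For $C\subseteq\overline{\DL}$, I would fix $w\in C$ and a basic neighbourhood $\overline A$ of $w$, so $A\in w$, and exhibit a point of $\overline A\cap\DL$. By assumption $A$ contains an infinite $\mid$-chain $X$; being infinite, $X$ is contained in some nonprincipal ultrafilter $v$, and since $X$ is linearly preordered by divisibility, \Cref{pr:autodivlinord} shows that $v$ is division-linear, so $v\in\DL$; and $X\subseteq A$ forces $A\in v$. As $A$ was an arbitrary element of $w$, this gives $w\in\overline{\DL}$.

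For $\overline{\DF}\subseteq C$, I would fix $w\in\overline{\DF}$ and an arbitrary $A\in w$, and build an infinite $\mid$-chain inside $A$. Since $\overline A$ is a neighbourhood of $w$, it meets $\DF$, so I may pick a nonprincipal self-divisible $v$ with $A\in v$. Self-divisibility gives $D(v)\in v$, hence $A\cap D(v)\in v$, which is infinite since $v$ is nonprincipal; choose $a_1$ in it. Recursively, given $a_1\mid\dots\mid a_k$ all lying in $A\cap D(v)$, the set $A\cap D(v)\cap a_k\Z$ still belongs to $v$ --- here one uses $a_k\in D(v)$, i.e.\ $a_k\Z\in v$ --- and is infinite, so I can choose $a_{k+1}$ in it with $\abs{a_{k+1}}>\abs{a_k}$; transitivity of $\mid$ then makes $\set{a_k:k\in\N}\subseteq A$ an infinite $\mid$-chain, as desired.

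I expect the recursive construction in the last paragraph to be the only part needing care; the rest is the routine pattern of testing membership in a closure against the basic clopen neighbourhoods and then exploiting the defining property of a single self-divisible (respectively division-linear) witness sitting inside each of them. The one point to watch is nonprincipality, which is precisely what keeps the sets appearing along the recursion infinite and the hand-picked ultrafilters nonprincipal; I do not anticipate any substantial obstacle beyond this bookkeeping.
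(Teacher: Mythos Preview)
Your proposal is correct and follows essentially the same approach as the paper: the inclusion $C\subseteq\overline{\DL}$ via \Cref{pr:autodivlinord}, and the recursive chain construction using $D(v)\in v$ for a self-divisible $v$, are identical to the paper's arguments. The only cosmetic difference is in the organisation: the paper first shows $\DF\subseteq C$ directly (taking $v=w$), proves both inclusions $C=\overline{\DL}$, and then deduces $\overline{\DF}=\overline{\DL}$ from $\DL\subseteq\DF\subseteq\overline{\DL}$; you instead show $\overline{\DF}\subseteq C$ in one step by first passing to a nearby $v\in\DF$, which is a harmless repackaging of the same idea.
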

\begin{proof}
Let 
\(
\Psi(w)\coloneqq \forall A\in w\;\exists X\subseteq A\;( X\text{  is an infinite $\mid$-chain}).
\)
First of all, we show that $\Psi(w)$ holds for every $w\in\DF$. For such a $w$, by definition $D(w)\in w$. Fix $A\in w$ and let $x_1\in A\cap D(w)\in w$. Since $x_1\in D(w)$, we have  $x_1\Z\in w$. If we take $x_2\in x_1\Z\cap A\cap D(w)\in w$ such that $\abs {x_2}>\abs {x_1}$, then $x_2\Z\in w$, and by induction we obtain the desired infinite chain $X=\{x_1\mid x_2\mid \ldots\}\subseteq A$.

 If $\Psi(w)$ holds, and $A\in w$, then every nonprincipal $u$ containing an infinite linearly ordered $X\subseteq A$ must be division-linear. Therefore,  in every open neighbourhood of $w$ we can find an element of $\DL$, hence $\Psi(w)$ implies that $w\in\overline{\DL}$. Conversely, assume $w\in\overline{\DL}$. Then for every $A\in w$ there exists $u\in\DL$ such that $A\in u$. If $X\in u$ witnesses division-linearity of $u$, then $X\cap A\in u$ is a linearly ordered infinite subset of $A$, hence $\Psi(w)$ holds.

We conclude by observing that $\DL\subseteq\DF\subseteq\overline{\DL}$ implies $\overline{\DF}=\overline{\DL}$.
\end{proof}
We call \emph{additive} (\emph{multiplicative}, respectively) \emph{Hindman ultrafilters} those in the closure of the nonprincipal  $\oplus$-idempotents ($\odot$-idempotents, respectively).
Because $\oplus$-idempotents are in $\mathrm{MAX}$, which is topologically closed,  every additive Hindman ultrafilter belongs to $(\mathrm{MAX}\setminus\set 0)\subseteq\DF$.
\begin{corollary}
\label{mult:hind}
Every multiplicative Hindman ultrafilter is in $\overline{\DF}$.
\end{corollary}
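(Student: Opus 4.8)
The plan is to reduce to nonprincipal $\odot$-idempotents and then apply the description of $\overline{\DF}$ obtained in \Cref{clos:df}. Since $\overline{\DF}$ is closed and the multiplicative Hindman ultrafilters are, by definition, exactly the points of the closure of the set of nonprincipal $\odot$-idempotents, it suffices to prove that every nonprincipal $\odot$-idempotent $u$ already lies in $\overline{\DF}$. By \Cref{clos:df}, this amounts to showing that every $A\in u$ contains an infinite $\mid$-chain. Note that, by \Cref{idemp:notdf}, such a $u$ need not itself be self-divisible, so this intermediate step is genuinely necessary; this is in contrast with the additive case, where $\oplus$-idempotents already lie in $\mathrm{MAX}\subseteq\DF$.

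So fix a nonprincipal $\odot$-idempotent $u$ and some $A\in u$; replacing $A$ by $A\cap(\Z\setminus\set 0)\in u$ we may assume $0\notin A$. Recall (Galvin--Glazer) that every member of a $\odot$-idempotent ultrafilter is a multiplicative IP set, so $A$ contains a set of the form $\set{\prod_{i\in F}x_i : \emptyset\ne F\subseteq\N\text{ finite}}$ for some sequence $(x_n)_{n\in\N}$ of integers. Since $u$ is nonprincipal it contains $\set{n\in\Z:\abs n\geq 2}$, so this product set, being a member of $u$, is infinite; hence infinitely many $x_n$ satisfy $\abs{x_n}\geq 2$, and after passing to the corresponding subsequence (whose product set is still contained in the original one, hence in $A$) we may assume $\abs{x_n}\geq 2$ for every $n$. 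Then $x_1\mid x_1x_2\mid x_1x_2x_3\mid\cdots$ is a chain of elements of $A$ with strictly increasing absolute values, that is, an infinite $\mid$-chain contained in $A$. By \Cref{clos:df} we conclude $u\in\overline{\DF}$, and therefore so is every ultrafilter in the closure of the nonprincipal $\odot$-idempotents.

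There is no serious obstacle here; the two small points requiring care are that the product set is genuinely infinite — which is precisely where nonprincipality of $u$ enters, guaranteeing that after deleting the trivial factors $x_n\in\set{1,-1}$ an infinite chain survives — and the invocation of the Galvin--Glazer theorem in the form ``members of multiplicative idempotents are multiplicative IP sets''. Should one prefer not to cite it, one can instead run the usual recursion from its proof directly, building integers $x_1,x_2,\ldots$ with $\abs{x_n}\geq 2$ and $x_1\cdots x_n\in A$ for all $n$, which produces the same chain.
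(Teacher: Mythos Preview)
Your approach is essentially the paper's: produce an infinite $\mid$-chain of partial products inside each member of the ultrafilter and invoke \Cref{clos:df}. The paper is marginally more direct in that it does not reduce to idempotents; it uses the well-known characterisation that $w$ is multiplicatively Hindman iff every $A\in w$ contains $\operatorname{FP}((x_i)_{i\in\omega})$ for some \emph{increasing} sequence $(x_i)$, which already guarantees that the chain $x_0,\,x_0x_1,\,x_0x_1x_2,\ldots$ is infinite without any pruning of small factors.

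There is one genuine slip, though. You write ``this product set, being a member of $u$, is infinite'', but Galvin--Glazer only yields $\operatorname{FP}((x_n))\subseteq A$; it does \emph{not} give $\operatorname{FP}((x_n))\in u$, so you cannot conclude infiniteness of the product set this way. The fix is exactly what you sketch in your last paragraph: either run the Galvin--Glazer recursion and at each step choose $x_n$ with $\abs{x_n}\ge 2$ (possible since the sets one picks from lie in the nonprincipal $u$), or, more simply, apply Galvin--Glazer to $A\cap\set{n\in\Z:\abs n\ge 2}\in u$ from the outset, so that every $x_n$, being itself a one-term product in this set, automatically satisfies $\abs{x_n}\ge 2$. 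With this adjustment your argument goes through.
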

\begin{proof}
An ultrafilter $w$ is multiplicatively Hindman if and only if for every $A\in w$ there exists an increasing sequence $(x_i)_{i\in\omega}$ of integers such that $\operatorname{FP}((x_i)_{i\in\omega})\coloneqq \{x_{i_0}\cdot\ldots\cdot x_{i_k}: i_0< \ldots< i_k\in \omega\}$ is a subset of $A$. Now, notice that $\{x_0\cdot\ldots\cdot x_k: k\in\omega\}$ is a proper subset of $\operatorname{FP}((x_i)_{i\in\omega})\subseteq A$ linearly ordered by divisibility. By \Cref{clos:df} we conclude that $w\in\overline{\DF}$.
\end{proof}
Combining \Cref{mult:hind}, \Cref{idemp:notdf}, and \Cref{clos:df}, we obtain the following. 
\begin{corollary}
  The sets $\DF$ and $\DL$ are not closed in $\beta \Z \setminus \Z$.
\end{corollary}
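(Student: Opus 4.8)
The plan is to produce a single nonprincipal ultrafilter $w$ lying in $\overline{\DF}$ but not in $\DF$, and then to deduce the statement for $\DL$ by a short inclusion chase. The natural candidate is the $\odot$-idempotent non-self-divisible ultrafilter constructed in \Cref{idemp:notdf}.

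First I would check that this $w$ is nonprincipal, so that it really is a point of the subspace $\beta\Z\setminus\Z$ in which we are taking closures. This is immediate from the construction: $w\in\mathrm{Free}\setminus\set{1,-1}$, and every element of $\mathrm{Free}$ has $D=\set{1,-1}$, whereas a principal ultrafilter $n$ satisfies $D(n)=\set{d\in\Z:d\mid n}$, which equals $\set{1,-1}$ only for $n=\pm 1$. Hence $w\in\beta\Z\setminus\Z$.

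Next, since $w$ is a nonprincipal $\odot$-idempotent it is in particular a multiplicative Hindman ultrafilter (being one of the ultrafilters whose closure defines that class), so \Cref{mult:hind} gives $w\in\overline{\DF}$; alternatively one could verify $\Psi(w)$ directly via \Cref{clos:df}, using that every member of an idempotent contains an $\operatorname{FP}$-set and hence an infinite $\mid$-chain. On the other hand $w$ is not self-divisible by choice, so $w\notin\DF$. Therefore $\DF\subsetneq\overline{\DF}$, i.e.\ $\DF$ is not closed in $\beta\Z\setminus\Z$. For $\DL$ I would invoke $\DL\subseteq\DF$ to conclude $w\notin\DL$, together with the equality $\overline{\DL}=\overline{\DF}$ from \Cref{clos:df} to conclude $w\in\overline{\DL}$; hence $\DL\subsetneq\overline{\DL}$ and $\DL$ is not closed either.

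I do not expect a genuine obstacle: the argument is just the combination of the three cited results, and the only points requiring a line of justification are that the idempotent of \Cref{idemp:notdf} is nonprincipal and that it belongs to $\overline{\DF}$. Everything else is the chain of inclusions $\DL\subseteq\DF\subseteq\overline{\DL}=\overline{\DF}$ applied to the single witness $w$.
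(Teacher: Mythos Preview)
Your proposal is correct and follows essentially the same route as the paper: pick a $\odot$-idempotent non-self-divisible ultrafilter from \Cref{idemp:notdf}, observe via \Cref{mult:hind} that it lies in $\overline{\DF}\setminus\DF$, and use $\overline{\DF}=\overline{\DL}$ together with $\DL\subseteq\DF$ to conclude for $\DL$. Your extra line checking nonprincipality is a welcome bit of care that the paper leaves implicit.
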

\begin{proof}
  Every ultrafilter provided by \Cref{idemp:notdf} lies in $\overline{\DF}\setminus \DF$ by \Cref{mult:hind}, and $\overline{\DF}\setminus \DF=\overline{\DL}\setminus\DF\subseteq\overline{\DL}\setminus\mathrm{DL}$.

\end{proof}

\section{A bit of (topological) algebra}\label{sec:algebra}
 We study the quotients $\beta \mathbb Z/\mathord{\sequiv w}$ and prove some additional characterisations of self-divisibility.

 Recall that every relation $\sequiv w$ is a congruence with respect to $\oplus$ (\Cref{fact:sequiveqrel})
\begin{remark}\label{rem:vvprime}\*
    \begin{enumerate}
 \item Since, by \Cref{lemma:sdoop}, the $\sequiv w$-class of $u\in \beta \mathbb Z$ only depends on its image in $\hat{ \mathbb Z}$, the quotient map $\rho_w\from \beta \mathbb Z\to \beta \mathbb Z/\mathord{\sequiv w}$ factors through a well-defined map $\sigma_w\from \hat{\mathbb  Z}\to \beta \mathbb Z/\mathord{\sequiv w}$, which sends $\pi(u)$ to $u/\mathord{\sequiv w}$. Note that $\sigma_w$ is a homomorphism of groups.
\item   Let $w\in \beta \mathbb Z$ be such that $w>1$, and view $\hat{\mathbb Z}$ as a subgroup of $\prod_{n\ge 2} \mathbb Z/{n \mathbb Z}$. Let $\hat{\mathbb Z}/w$  be the image of $\hat{\mathbb Z}$ under the projection from $\prod_{n\ge 2} \mathbb Z/{n \mathbb Z}$ onto the ultraproduct $\prod_w \mathbb Z/{n \mathbb Z}$.  The obvious map $\beta \mathbb Z/\mathord{\sequiv w}\to \hat {\mathbb Z}/w$ is (well-defined and) an isomorphism making the diagram in \Cref{figure:commdiag} commute.
  \begin{figure}[b]\caption{Diagram from \Cref{rem:vvprime}.}\label{figure:commdiag}
\begin{center}
\begin{tikzpicture}[scale=3]
\node(nw) at (0,0.5){$\beta \mathbb Z$};
\node(ne) at (1,0.5){$\hat {\mathbb Z}$};
\node (sw) at (0,0) {$\beta \mathbb Z/\mathord{\sequiv w}$};
\node(se) at (1,0){$\hat {\mathbb Z}/w$};

\path[->, thick,  font=\scriptsize]
(nw) edge node [above] {$\pi$} (ne)
(sw) edge node [above] {$\cong$} (se)
(nw) edge node [left] {$\rho_w$} (sw)
(ne) edge node [above] {$\sigma_w\;$} (sw)
(ne) edge node [right] {} (se);
\end{tikzpicture}
\end{center}
\end{figure}
 By using commutativity of the diagram, together with the fact that $\pi$ is a closed map, it is easy to check that $\sigma_w$ is continuous with respect to the quotient topology on $\beta \mathbb Z/\mathord{\sequiv w}$, and in fact induces the same quotient topology, that is, $\sigma_w\inverse(C)$ is closed if and only if $C$ is closed (if and only if $\rho_w\inverse(C)$ is closed).
\item 
  In particular,  if $w$ is self-divisible then, by \Cref{main:thm} and \Cref{lemma:pitoZhat}, the sequences $(k_n)_{n\ge 2}$ such that $k_n=0$ for $w$-almost every $n$ form a closed subgroup of $\hat{\mathbb Z}$, namely, $\pi(Z_w)$, which then coincides with the kernel of the projection $\hat{ \mathbb Z}\to \hat{\mathbb Z}/w$. By a standard fact about profinite groups (see e.g.~\cite[Theorem~1.2.5]{wilson}), $\hat{\mathbb Z}/w$  with the quotient topology induced by this projection is profinite. We will see in \Cref{thm:charred} that the converse is also true, namely, that $\ker(\sigma_w)$ is closed if and only if $w$ is self-divisible.
\end{enumerate}
\end{remark}
\begin{corollary}\label{co:addmax}
The quotient $\beta \mathbb Z/\mathord{\sequiv w}$ may be identified with a subgroup of the ultraproduct $\prod_w \mathbb Z/n \mathbb Z$, which is isomorphic to a quotient (as abstract groups) of $\hat{\mathbb Z}$. If $w$ is self-divisible, then it is isomorphic to  $\prod_{p \in \mathbb P} G_p$, where $G_p= \mathbb Z_p$ if $\phi_w(p)=\omega$, and $G_p= \mathbb Z/p^{\phi_w(p)}\mathbb Z$ otherwise.
\end{corollary}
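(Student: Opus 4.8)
The first assertion requires no new work: by \Cref{rem:vvprime}(2) the quotient $\beta\mathbb Z/\sequiv w$ is isomorphic to $\hat{\mathbb Z}/w$, and the latter is by its very definition a subgroup of the ultraproduct $\prod_w\mathbb Z/n\mathbb Z$, obtained as the image of $\hat{\mathbb Z}$ under the composition of the inclusion $\hat{\mathbb Z}\hookrightarrow\prod_{n\ge 2}\mathbb Z/n\mathbb Z$ with the projection onto $\prod_w\mathbb Z/n\mathbb Z$; being the image of $\hat{\mathbb Z}$ under a group homomorphism, it is a quotient of $\hat{\mathbb Z}$ as an abstract group.

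For the self-divisible case the plan is to combine \Cref{rem:vvprime}(3) with \Cref{fact:sbgrpsofzhat}. Assuming $w$ self-divisible, \Cref{rem:vvprime}(3) (which itself rests on \Cref{main:thm} and \Cref{lemma:pitoZhat}) says that $\pi(Z_w)$ is a closed subgroup of $(\hat{\mathbb Z},+)$ and is exactly the kernel of the projection $\hat{\mathbb Z}\to\hat{\mathbb Z}/w$, so that $\beta\mathbb Z/\sequiv w\cong\hat{\mathbb Z}/w\cong\hat{\mathbb Z}/\pi(Z_w)$. Next I would invoke \Cref{fact:sbgrpsofzhat}: under the identification $\hat{\mathbb Z}\cong\prod_{p\in\mathbb P}\mathbb Z_p$, write $\pi(Z_w)=\prod_{p\in\mathbb P}p^{\psi(p)}\mathbb Z_p$ for a unique $\psi\from\mathbb P\to\omega+1$. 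Then $\hat{\mathbb Z}/\pi(Z_w)\cong\prod_{p\in\mathbb P}\bigl(\mathbb Z_p/p^{\psi(p)}\mathbb Z_p\bigr)$ (quotient of a product by a product of subgroups), and, recalling the convention $p^\omega\mathbb Z_p=\{0\}$, this is precisely $\prod_{p\in\mathbb P}G_p$ provided $\psi=\varphi_w$. So the whole statement reduces to that equality.

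Verifying $\psi=\varphi_w$ is the only step with any content, and I expect it to be the main obstacle — though a mild one, amounting to unwinding the relevant definitions. For $\psi(p)\le\varphi_w(p)$: since $w\in Z_w$ we have $\pi(w)\in\pi(Z_w)$, so the $p$-component of $\pi(w)$ lies in $p^{\psi(p)}\mathbb Z_p$; as $p^k\mathbb Z\in w$ if and only if $k\le\varphi_w(p)$, that component has $p$-adic valuation exactly $\varphi_w(p)$ (with the obvious reading when $\varphi_w(p)=\omega$), forcing $\psi(p)\le\varphi_w(p)$. For the reverse inequality, recall from \Cref{rem:vvprime}(3) that $\pi(u)\in\pi(Z_w)$ if and only if $D(u)\in w$. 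If $\psi(p)<\varphi_w(p)$ for some $p$ (so $\psi(p)<\omega$), choose $u$ with $\pi(u)\in\pi(Z_w)$ whose $p$-component has valuation exactly $\psi(p)$, i.e.\ $\varphi_u(p)=\psi(p)$ — possible since the $p$-th factor of $\pi(Z_w)$ is $p^{\psi(p)}\mathbb Z_p$ and $\pi$ is surjective; then $\varphi_u(p)+1\le\varphi_w(p)$, hence $p^{\varphi_u(p)+1}\mathbb Z\in w$, while by~\eqref{eq:duc} we have $D(u)\subseteq(p^{\varphi_u(p)+1}\mathbb Z)^\cc\notin w$, contradicting $D(u)\in w$. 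Hence $\psi=\varphi_w$, which by the computation above gives $\beta\mathbb Z/\sequiv w\cong\prod_{p\in\mathbb P}\bigl(\mathbb Z_p/p^{\varphi_w(p)}\mathbb Z_p\bigr)=\prod_{p\in\mathbb P}G_p$ (in fact a homeomorphism as well, both sides being profinite).
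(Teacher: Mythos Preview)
Your proof is correct and follows exactly the route the paper intends: invoke \Cref{rem:vvprime} to identify $\beta\mathbb Z/\mathord{\sequiv w}$ with $\hat{\mathbb Z}/w$ and, in the self-divisible case, with $\hat{\mathbb Z}/\pi(Z_w)$; then apply \Cref{fact:sbgrpsofzhat} and the decomposition $\hat{\mathbb Z}\cong\prod_p\mathbb Z_p$. The paper's own proof is the single sentence ``This follows at once from \Cref{rem:vvprime}, \Cref{fact:sbgrpsofzhat}, and the fact that $\hat{\mathbb Z}\cong\prod_{p\in\mathbb P}\mathbb Z_p$'', so your argument simply makes explicit the one point the paper leaves to the reader, namely that the function $\psi$ provided by \Cref{fact:sbgrpsofzhat} coincides with $\varphi_w$; your verification of this via the characterisation $\pi(u)\in\pi(Z_w)\Leftrightarrow D(u)\in w$ (extracted from \Cref{rem:vvprime}(3)) is clean and correct.
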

\begin{proof}
This follows at once from \Cref{rem:vvprime}, \Cref{fact:sbgrpsofzhat}, and the fact that $\hat{\mathbb Z}\cong \prod_{p\in \mathbb P} \mathbb Z_p$.
\end{proof}
\begin{proposition}
 The map $\sigma_w$ is injective if and only if it is an isomorphism, if and only if $w\in \mathrm{MAX}\setminus\set 0$.
\end{proposition}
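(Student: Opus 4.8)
The plan is to pin down $\ker\sigma_w$ and compare it with $\mathrm{MAX}$. I would first record that $\sigma_w$ is always surjective: since $\rho_w=\sigma_w\circ\pi$, with $\pi$ surjective by \Cref{lemma:pitoZhat} and $\rho_w$ surjective as a quotient map, $\sigma_w$ is onto as well. Hence $\sigma_w$ is injective if and only if it is bijective, and a bijective $\sigma_w$ is automatically an isomorphism of topological groups: it is a group homomorphism, and it is a homeomorphism because, as recorded in \Cref{rem:vvprime}, $\sigma_w$ is continuous and induces the quotient topology on $\beta\mathbb Z/\mathord{\sequiv w}$, so any bijective map with this property is a homeomorphism. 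This settles the first ``if and only if''.

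For the second, I would compute the kernel. As $\pi$ is onto, every element of $\hat{\mathbb Z}$ has the form $\pi(u)$, and $\sigma_w(\pi(u))$ is the identity exactly when $u\sequiv w 0$, that is, by \Cref{rem:divcongbasics}, when $w\smid u$. Therefore $\ker\sigma_w=\pi(\set{u: w\smid u})$. Moreover $\pi\inverse(\set 0)=\mathrm{MAX}$: indeed $\pi(u)=0$ means $n\mathbb Z\in u$ for every $n$, which (bearing in mind that $0\in\mathrm{MAX}$) holds if and only if $u\in\mathrm{MAX}$, by \Cref{fact:maxsdiv}, since $\pi(0)=0$. Consequently $\sigma_w$ is injective if and only if $\set{u: w\smid u}\subseteq\mathrm{MAX}$, and it remains to show that this inclusion holds precisely when $w\in\mathrm{MAX}$ (recalling that $w\ne 0$ throughout).

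One implication is soft. If $w\in\mathrm{MAX}$ and $w\smid u$, then $D(u)\in w$ by \Cref{rem:dfsmid}; the set $D(u)$ is $\mid$-downward closed, so if it were proper I could pick $m\in D(u)^\cc$, note that every multiple of $m$ again lies in $D(u)^\cc$, hence $m\mathbb Z\subseteq D(u)^\cc$, and then $m\mathbb Z\in w$ (since $w\in\mathrm{MAX}$) would contradict $D(u)\in w$; thus $D(u)=\mathbb Z$, i.e.\ $u\in\mathrm{MAX}$. For the converse I would argue contrapositively: assuming $w\notin\mathrm{MAX}$, fix $n_0$ with $n_0\mathbb Z\notin w$ and, factoring $n_0$ into prime powers and using that $w$ is an ultrafilter, obtain a prime power $p^e$ with $p^e\mathbb Z\notin w$. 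By saturation there is a nonstandard $b$ such that, for every standard $m$, one has $m\mid b$ if and only if $p^e\nmid m$: the relevant type is finitely satisfiable, a witness for finitely many constraints $m_i\mid x$ (with $p^e\nmid m_i$, $i\le k$) together with $m'_j\nmid x$ (with $p^e\mid m'_j$) being $x=\operatorname{lcm}(m_1,\dots,m_k)\cdot q$ for a large enough prime $q$. Setting $v\coloneqq\tp(b/\mathbb Z)$, one gets $D(v)=(p^e\mathbb Z)^\cc$, which is infinite, so $v$ is nonprincipal; also $D(v)\in w$, hence $w\smid v$ by \Cref{rem:dfsmid}, while $p^e\notin D(v)$ gives $v\notin\mathrm{MAX}$ by \Cref{fact:maxsdiv}, so $\pi(v)\ne 0$. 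Thus $\pi(v)$ is a nonzero element of $\ker\sigma_w$, and $\sigma_w$ fails to be injective, as wanted. I expect the only real obstacle to be this last construction — checking that the displayed type is consistent and that $b$ has exactly the prescribed set of standard divisors — with everything else reducing to a routine unwinding of the definitions and of \Cref{rem:vvprime}.
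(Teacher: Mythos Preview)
Your proposal is correct and follows essentially the same route as the paper: both reduce injectivity of $\sigma_w$ to the inclusion $\{u:w\smid u\}\subseteq\mathrm{MAX}$, handle the forward direction via $D(u)\in w$ for $w\in\mathrm{MAX}$, and for $w\notin\mathrm{MAX}$ construct the same witness $v$ with $D(v)=(p^e\mathbb Z)^\cc$ (the paper phrases this as choosing $v$ with $\varphi_v(p)=e-1$ and $\varphi_v(q)=\omega$ for $q\ne p$, which is exactly your $b$). One cosmetic fix: in your forward implication you must pick $m\ne 0$ in $D(u)^\cc$ and conclude $D(u)\supseteq\mathbb Z\setminus\{0\}$ rather than $D(u)=\mathbb Z$, since $0\notin D(u)$ whenever $u\ne 0$; also, your argument for the first ``if and only if'' (surjectivity plus the topological remark) is a welcome addition, as the paper leaves that equivalence implicit.
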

\begin{proof}
If $w=0$ the map $\sigma_w$ is not defined, so let $w\ne 0$.
Assume $w\in \mathrm{MAX}$, and observe that $\pi\inverse(\set 0)=\mathrm{MAX}$. So if $\pi(v)\ne 0$ then $v\notin \mathrm{MAX}$, hence $w\nsmid v$ by \Cref{fact:maxsdiv}. This shows that, if $\pi(v)\ne 0$, then $\sigma_w(\pi(v))\ne 0$, so $\sigma_w$ is injective. Conversely, if $w\notin \mathrm{MAX}$, there must be $n>1$ such that $(n \mathbb Z)^\cc\in w$. If $n=p_0^{k_0}\cdot\ldots\cdot p_\ell^{k_\ell}$, then $(n \mathbb Z)^\cc= (p_0^{k_0}\mathbb Z)^\cc\cup \ldots \cup(p_\ell^{k_\ell}\mathbb Z)^\cc$. Without loss of generality $(p_0^{k_0}\mathbb Z)^\cc\in w$. Take any $v$ congruent to $p_0^{k_0-1}$ modulo every power of $p_0$ and divided by every other prime power; in other words, take $v$ with $\varphi_v(p_0)=k_0-1$ and $\varphi_v(p')=\omega$ for $p'\ne p_0$. Then $\pi(v)\ne 0$, but $\sigma_w(\pi(v))=\rho_w(v)=0$ because $D(v)\supseteq (p_0^{k_0} \mathbb Z)^\cc \in w$.
\end{proof}

\begin{remark}
The equivalence relation $\sequiv w$ is a congruence with respect to $\odot$ by \cite[Theorem~5.7(a)]{sobot_congruence_2021} . We leave it to the reader to check that everything above in this section works for $(\beta \mathbb Z, \oplus, \odot)$, with $\hat {\mathbb Z}$ viewed as a ring,  where closed subgroups are replaced by closed ideals, etc.
\end{remark}
We already saw several different characterisations of  self-divisibility. In order to provide more, we recall a fact from the theory of profinite groups and make an easy observation.
\begin{fact}\label{fact:autcont}
  If $G$ is a profinite group and $\sigma\from \hat {\mathbb Z}\to G$ is a surjective homomorphism (of abstract groups), then it is automatically continuous.
\end{fact}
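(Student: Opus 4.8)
The plan is to reduce the continuity of $\sigma$ to the elementary observation that every finite-index subgroup of $\hat{\mathbb Z}$ is open.

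First I would recall that in a profinite group $G$ the open normal subgroups form a basis of neighbourhoods of the identity, so every open subset of $G$ is a union of cosets $gN$ with $N$ an open normal subgroup of $G$. Since $\sigma$ is surjective, each such coset equals $\sigma(x)N$ for some $x\in\hat{\mathbb Z}$, and then $\sigma\inverse(gN)=x\,\sigma\inverse(N)$; hence $\sigma$ is continuous as soon as $\sigma\inverse(N)$ is open in $\hat{\mathbb Z}$ for every open normal $N\leq G$.

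Next I would use that $G$, being profinite, is compact, so each open normal subgroup $N$ has finite index $n\coloneqq[G:N]$; as $\sigma$ is onto, $\sigma\inverse(N)$ is a subgroup of $\hat{\mathbb Z}$ of that same index $n$. It therefore remains to prove the key step: every subgroup $H\leq\hat{\mathbb Z}$ of finite index $n$ is open. This is where the structure of $\hat{\mathbb Z}$ enters, and it makes the step easy: since $\hat{\mathbb Z}$ is abelian, the quotient $\hat{\mathbb Z}/H$ is an abelian group of order $n$, so $n\hat{\mathbb Z}\subseteq H$; but $n\hat{\mathbb Z}$ is one of the basic clopen subgroups of $\hat{\mathbb Z}$, so $H$ contains an open subgroup and is thus itself open. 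Applying this with $H=\sigma\inverse(N)$ finishes the argument.

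I do not expect a genuine obstacle here: the only place content enters is this last step, and for $\hat{\mathbb Z}$ it follows immediately from commutativity (for a general topologically finitely generated profinite group in place of $\hat{\mathbb Z}$ one would instead invoke the much deeper Nikolov--Segal theorem, but that is not needed). The statement is, in effect, just a convenient packaging of the fact that finite-index subgroups of $\hat{\mathbb Z}$ are open.
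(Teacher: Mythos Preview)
Your proposal is correct and essentially identical to the paper's proof sketch: both reduce continuity to showing that preimages of open (normal) subgroups of $G$ are open, use compactness to get finite index, and then prove that every finite-index subgroup $H\le\hat{\mathbb Z}$ is open via $n\hat{\mathbb Z}\subseteq H$, even mentioning Nikolov--Segal for the general case. The only cosmetic difference is that the paper argues $n\hat{\mathbb Z}$ is open because it is closed of finite index, whereas you take it as one of the basic clopen subgroups.
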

\begin{proof}[Proof sketch]
It is enough  to show that if $U$ is an open subgroup of $G$, then $\sigma\inverse(U)$ is open in $\hat{\mathbb Z}$. By compactness, open subgroups have finite index, hence,  it suffices to show that every finite index subgroup of $\hat{\mathbb Z}$ is open. This is in fact true of every topologically finitely generated profinite group by a deep result of Nikolov and Segal~\cite{Nikolov_Segal_2007}, but this special case has a quick proof, which we provide for the sake of completeness. Namely, if $H$ has index $n$ in $\hat {\mathbb Z}$, then $n \hat {\mathbb Z}\subseteq H$, so $H$ can be partitioned into cosets of $n\hat{\mathbb Z}$, hence it  suffices to show that $n\hat {\mathbb Z}$ is open. But $n \hat{\mathbb Z}$ is easily checked to be closed and of finite index, which is equivalent to being open.
\end{proof}

\begin{proposition}\label{co:chartensorpair}
  If $u\tmid v$ and $v\smid t$, then $u\smid t$.
\end{proposition}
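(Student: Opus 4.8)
The plan is to recast all three divisibility relations in terms of the combinatorial data attached to the ultrafilters, via \Cref{rem:divchar}. Recall that $u\smid t$ is equivalent to $D(t)\in u$, and $u\tmid v$ is equivalent to $u\cap\mathcal U\subseteq v$, i.e.\ every $\mid$-upward-closed member of $u$ lies in $v$. So the task reduces to showing: if every $\mid$-upward-closed set in $u$ belongs to $v$, and $D(t)\in v$, then $D(t)\in u$. The natural strategy is contrapositive combined with the observation that $D(t)^\cc$ is $\mid$-upward closed, which was recorded in~\eqref{eq:duc}.

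First I would note that $D(t)^\cc=\{k:k\Z\notin t\}=\bigcup_{p\in\mathbb P}p^{\varphi_t(p)+1}\Z$ is $\mid$-upward closed. Now suppose toward a contradiction that $u\tmid v$ and $v\smid t$ but $u\nsmid t$. Then $D(t)\notin u$, so $D(t)^\cc\in u$. Since $D(t)^\cc$ is $\mid$-upward closed, it is a member of $u\cap\mathcal U$, hence by $u\tmid v$ we get $D(t)^\cc\in v$. But $v\smid t$ means $D(t)\in v$, contradicting that $v$ is a (proper) filter. This gives $u\smid t$.

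Alternatively, and perhaps more transparently, one can argue with realisations: take $(a,b)\models u\otimes v$, so in particular $a\mid b$ fails to be forced, but rather we use that $b\models v\mid \Z a$; and take a realisation $c$ of $t$ with $(b,c)$ a tensor pair, so $b\mid c$ by $v\smid t$. The point is then to transfer divisibility of $c$ by $b$ down to divisibility of $c$ by (a suitable realisation related to) $a$ using $u\tmid v$, i.e.\ that $a\mid b'$ for some $b'\models v$. I expect the cleanest write-up to be the purely filter-theoretic one above, avoiding the need to juggle tensor triples.

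The only mild subtlety — and the step I would be most careful about — is making sure the relations are applied with the correct variance: $u\tmid v$ transports $\mid$-upward-closed sets \emph{from $u$ to $v$}, whereas $v\smid t$ is the statement $D(t)\in v$, a fact \emph{about $v$}; these compose in exactly the right direction because $D(t)^\cc$ is itself $\mid$-upward closed, so membership in $u$ propagates to $v$ where it collides with $D(t)\in v$. No saturation or density input is needed; this is a short structural argument once \Cref{rem:divchar} and the observation about $D(t)^\cc$ are in hand.
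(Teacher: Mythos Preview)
Your filter-theoretic argument is correct and complete: once one knows from \Cref{rem:divchar} that $u\tmid v$ means $u\cap\mathcal U\subseteq v$ and that $v\smid t$ means $D(t)\in v$, the observation that $D(t)^\cc\in\mathcal U$ (recorded at~\eqref{eq:duc}) finishes the proof exactly as you wrote. Nothing is missing.

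The paper takes a different route, working with realisations rather than with the combinatorial characterisations. It fixes $(b,c)\models v\otimes t$, so $b\mid c$; picks $a\models u$ with $a\mid b$ using $u\tmid v$; concludes $a\mid c$; and then uses $\abs a\le\abs b$ together with the Puritz criterion (\Cref{fact:abstensorpair}) to see that $(a,c)$ is again a tensor pair, giving $u\smid t$. So the paper's argument is essentially the ``alternative'' you sketched, made precise via \Cref{fact:abstensorpair} rather than by juggling a tensor triple. Your approach is arguably more elementary, needing no nonstandard input beyond \Cref{rem:divchar}; the paper's approach stays closer to the model-theoretic language used elsewhere and exhibits an explicit witnessing tensor pair. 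Either proof is a few lines long.
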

\begin{proof}
 Assume $u\tmid v$ and $v\smid t$. Let $(b, c)\models v\otimes t$ and let $a\models u$ be such that $a\mid b$. Then $b\mid c$ and thus $a\mid c$, but $\abs a\leq \abs b$ and thus $u\smid t$ by \Cref{fact:abstensorpair}.
\end{proof}

\begin{theorem}\label{thm:charred}
  The following are equivalent for $w\in \beta \mathbb Z\setminus \set 0$.
  \begin{enumerate}
          \item\label{point:wdivw} The ultrafilter $w$ is self-divisible. 
  \item\label{point:AEab} For all $B\in w$ there is $A\in w$ such that for all $a,a'\in A$ there is $b\in B$ with  $b\mid \operatorname{gcd}(a,a')$.
  \item\label{point:AEbZ} For all $B\in w$ there are $A\in w$ and $b\in B$ such that $A\subseteq b \mathbb  Z$.
  \item\label{point:bb} For all $B\in w$ there is $b\in B$ such that $b \mathbb Z\in w$.
  \item\label{point:bbz} For all $B\in w$ we have $\set{b\in B: b \mathbb  Z\in w}\in w$.

      \item \label{point:kwdf}  For all $k\in \mathbb Z\setminus \set 0$ we have that $kw$ is self-divisible.
      \item \label{point:wnwm} There are $n\ne m$ such that $w^{\oplus n}\sequiv ww^{\oplus m}$.\footnote{The notation is probably self-explicative, but: $w^{\oplus n}:=\overbrace{w\oplus\dots\oplus w}^{\tiny{n \ \text{times}}}$.}
      \item \label{point:vdivw} For all  $v$,  if $w\equiv_{v} 0$ then $w\sequiv v 0$. 
      \item \label{point:dadw} If  ${}^{\ast}\Z\ni a\models w$, then $\{b\in{}^{\ast}\Z: b\mid a\}\subseteq{}^{\ast}D(w)$.
       \item \label{point:cancmax} $Z_w$ is closed under $\oplus$ and, whenever $v\in \mathrm{MAX}$, if $u\oplus v\oplus t\in Z_w$ then $u\oplus t\in Z_w$
  \item \label{point:preimm} $Z_w$ is closed under $\oplus$ and $Z_w=\pi\inverse(\pi(Z_w))$.
  \item \label{point:deprem} $Z_w$ is closed under $\oplus$ and whether $w\tmid u$ only depends on the remainder classes of $u$ modulo standard $n$.
  \item \label{point:kernelclosed} The kernel $\ker(\sigma_w)$ is closed in $\hat{\mathbb Z}$.
   \item \label{point:strqprocyclic} $\beta \mathbb Z/\mathord{\sequiv w}$ is a procyclic group with respect to the quotient topology.\footnote{A \emph{procyclic} group is a profinite group with a dense cyclic subgroup.  Equivalently,  up to isomorphism, it is a quotient of  $\hat{\mathbb Z}$ by a closed subgroup.\label{footnote:procyclic}}
   \item \label{point:strqprofinite} $\beta \mathbb Z/\mathord{\sequiv w}$ is a profinite group with respect to some topology.            \item\label{point:explicitquotient} We have  $(\beta \mathbb Z, \oplus)/\mathord{\sequiv w}\cong\prod_{p \in \mathbb P} G_p$, where $G_p= \mathbb Z_p$ if $\phi_w(p)=\omega$, and $G_p= \mathbb Z/p^{\phi_w(p)}\mathbb Z$ otherwise.
  \end{enumerate}
\end{theorem}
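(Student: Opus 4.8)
The plan is to lean on \Cref{main:thm}, which already equates self-divisibility with several of the listed properties — most usefully with its point~\eqref{point:mainthmphi} (``$w\tmid u$ iff $D(w)\subseteq D(u)$'') and point~\eqref{point:mainthmgcd} (the $\gcd$-condition) — and to split the sixteen conditions into three clusters, proving a short cycle inside each and wiring every cluster back to~\eqref{point:wdivw}. The first cluster, \eqref{point:AEab}--\eqref{point:bbz} and~\eqref{point:dadw}, consists of elementary reformulations: \eqref{point:AEbZ}, \eqref{point:bb}, \eqref{point:bbz} restate ``$D(w)\in w$'' — using that $A\subseteq b\mathbb{Z}$ with $A\in w$ just says $b\mathbb{Z}\in w$, that $\set{b\in B:b\mathbb{Z}\in w}=B\cap D(w)$, and that an ultrafilter meeting every member of $w$ must contain $D(w)$ — so the cycle \eqref{point:bbz}$\Rightarrow$\eqref{point:bb}$\Rightarrow$\eqref{point:AEbZ}$\Rightarrow$\eqref{point:wdivw}$\Rightarrow$\eqref{point:bbz} is immediate; \eqref{point:AEab} is equivalent to point~\eqref{point:mainthmgcd} of \Cref{main:thm} by a routine saturation argument (the gcd of two standard integers being standard); and \eqref{point:dadw}$\Leftrightarrow$\eqref{point:wdivw} because $D(w)$, hence ${}^{\ast}D(w)$, is $\mid$-downward closed, so for $a\models w$ one has $\set{b:b\mid a}\subseteq{}^{\ast}D(w)$ iff $a\in{}^{\ast}D(w)$, which by \Cref{rem:dfsmid} is exactly self-divisibility.

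The second cluster is \eqref{point:kwdf}, \eqref{point:wnwm}, \eqref{point:vdivw}. For \eqref{point:kwdf}, use point~\eqref{point:mainthmgcd} of \Cref{main:thm} and that the realisations of $kw$ are the $ka$ with $a\models w$, while $\gcd(ka,kb)=\abs{k}\gcd(a,b)$. For \eqref{point:vdivw}, rewrite it through \Cref{rem:divcongbasics} as ``$v\tmid w\Rightarrow D(w)\in v$ for every $v$'': since $D(w)^\cc$ is $\mid$-upward closed, if $v\tmid w$ and $D(w)^\cc\in v$ then $D(w)^\cc\in w$ by \Cref{rem:divchar}, so self-divisibility gives \eqref{point:vdivw}, and \eqref{point:vdivw} applied to $v=w$ gives it back. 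The delicate item here is \eqref{point:wnwm}: if $w^{\oplus n}\sequiv w w^{\oplus m}$ with $n>m$, then $\pi(w^{\oplus n}\ominus w^{\oplus m})=\pi(w^{\oplus(n-m)})$, so \Cref{lemma:sdoop} and transitivity yield $w\smid w^{\oplus k}$ with $k:=n-m$, i.e.\ $D(w^{\oplus k})\in w$; writing $D(w^{\oplus k})=\set{n:\exists d\in D(w)\; n\mid kd}$ and transferring, $a\models w$ forces $a/\gcd(a,k)\in{}^{\ast}D(w)$, and since $\gcd(a,k)$ is a \emph{standard} integer (so has only standard prime divisors) while any prime witnessing $a\notin{}^{\ast}D(w)$ must be nonstandard and hence survives dividing out $\gcd(a,k)$, we conclude $a\in{}^{\ast}D(w)$, that is, self-divisibility; the reverse implication is trivial, as $w^{\oplus 2}\sequiv w w$ by \Cref{lemma:sdoop} and \Cref{rem:dfsmid}.

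The third cluster, \eqref{point:cancmax}--\eqref{point:explicitquotient}, is topological and algebraic. Here \eqref{point:wdivw}$\Rightarrow$\eqref{point:explicitquotient}$\Rightarrow$\eqref{point:strqprofinite} is \Cref{co:addmax} followed by ``a product of the $G_p$ is profinite''; \eqref{point:strqprofinite}$\Rightarrow$\eqref{point:kernelclosed} holds because then $\sigma_w$ is a surjective homomorphism onto a profinite group, hence continuous by \Cref{fact:autcont}, so $\ker\sigma_w$ is closed; and \eqref{point:kernelclosed}$\Leftrightarrow$\eqref{point:strqprocyclic} since, by \Cref{rem:vvprime}, $\beta\mathbb{Z}/\mathord{\sequiv w}$ with its quotient topology is $\hat{\mathbb{Z}}/\ker\sigma_w$, which is Hausdorff — equivalently procyclic, being an image of $\hat{\mathbb{Z}}$ — iff $\ker\sigma_w$ is closed. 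The cluster is closed by \eqref{point:kernelclosed}$\Rightarrow$\eqref{point:wdivw}: by \Cref{fact:sbgrpsofzhat} the closed subgroup $\ker\sigma_w$ is $\set{x\in\hat{\mathbb{Z}}:\forall n\in D\; n\mid x}$ for a $\mid$-downward closed $D$, and feeding well-chosen elements of $\hat{\mathbb{Z}}$ into ``$D(x)\in w\iff D\subseteq D(x)$'' shows first $D\in w$ and then $D\subseteq D(w)$ (otherwise $p^{\varphi_w(p)+1}\mathbb{Z}\in w$ for some $p$, against the definition of $\varphi_w$), so $D(w)\supseteq D\in w$. As for the $Z_w$-conditions: \eqref{point:preimm} and \eqref{point:deprem} are the same statement reworded; \eqref{point:preimm}$\Rightarrow$\eqref{point:cancmax} because $\pi(u\oplus v\oplus t)=\pi(u\oplus t)$ when $v\in\mathrm{MAX}$, so membership in $Z_w=\pi\inverse(\pi(Z_w))$ ignores the inserted $v$; \eqref{point:wdivw}$\Rightarrow$\eqref{point:preimm} is precisely what the proof of \Cref{main:thm}, implication~\eqref{point:mainthmeqrel}$\Rightarrow$\eqref{point:mainthmphi}, establishes; and \eqref{point:cancmax}$\Rightarrow$\eqref{point:preimm} uses the cancellation hypothesis together with \Cref{lemma:pitoZhat}, \Cref{fact:sbgrpsofzhat} and \Cref{co:chartensorpair} to force $Z_w=\pi\inverse(\pi(Z_w))$, after which one argues as in \eqref{point:kernelclosed}$\Rightarrow$\eqref{point:wdivw}.

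I expect \eqref{point:wnwm} and \eqref{point:cancmax} to be the genuinely delicate steps. In \eqref{point:wnwm} the subtlety is that ${}^{\ast}D(w)$ has no convenient internal description — $D(w)$ being an infinite intersection of congruence conditions — so the argument must turn on whether a prime obstruction is standard or nonstandard. In \eqref{point:cancmax} the point is that $\beta\mathbb{Z}$ is far from cancellative, so one cannot simply ``subtract'' a $\mathrm{MAX}$ element from an ultrafilter; the cancellation hypothesis has to be played off against the classification of closed subgroups of $\hat{\mathbb{Z}}$ and against \Cref{co:chartensorpair} in order to reconstruct $Z_w$ from $\pi(Z_w)$.
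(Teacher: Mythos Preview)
Your architecture and most implications match the paper's; the two steps you single out as delicate are precisely where your arguments diverge, and both deserve a comment.

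For \eqref{point:wnwm}$\Rightarrow$\eqref{point:wdivw} the paper argues the contrapositive through tensor pairs: if $(a,b)\models w\otimes w$ with $a\nmid b$, pick $p\in{}^{\ast}\mathbb P$ and $\alpha$ with $p^\alpha\mid a$, $p^\alpha\nmid b$; the tensor-pair condition (\Cref{fact:abstensorpair}, applied to $n\mapsto v_p(n)$) forces $p$ infinite, whence $p^\alpha\nmid kb$ for every standard $k$, so $w\nsmid kw\sequiv w w^{\oplus k}$ for all $k$, contradicting \eqref{point:wnwm}. Your direct route through ${}^{\ast}D(w)$ is also valid, but the key claim ``any prime witnessing $a\notin{}^{\ast}D(w)$ must be nonstandard'' needs two ingredients you elide: the transfer of the first-order fact ``$n\in D(w)$ iff every prime power dividing $n$ lies in $D(w)$'', and a short check that for each \emph{standard} prime $p$ and $a\models w$ one has $p^{v_p(a)}\in{}^{\ast}D(w)$ (split on whether $\phi_w(p)$ is finite---then $v_p(a)=\phi_w(p)$ and $p^{\phi_w(p)}\in D(w)$---or $\omega$, in which case transfer $\forall m\; p^m\in D(w)$). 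Once these are in place your argument goes through.

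For \eqref{point:cancmax} the paper is much shorter: the cancellation hypothesis directly yields transitivity of $\equiv_w$ (from $u\ominus v,\,v\ominus t\in Z_w$ and $\oplus$-closure one gets $u\oplus(-v\oplus v)\ominus t\in Z_w$, then cancels $-v\oplus v\in\mathrm{MAX}$), after which \Cref{main:thm} closes the loop. Your detour through \eqref{point:preimm} also works, but your lemma citations are misplaced: the step \eqref{point:cancmax}$\Rightarrow$\eqref{point:preimm} needs only $\mathrm{MAX}\subseteq Z_w$, $\oplus$-closure, and a single cancellation (for $\pi(u)=\pi(u')$ with $u\in Z_w$, observe $(u'\ominus u)\oplus u=u'\oplus(-u\oplus u)\in Z_w$ and cancel $-u\oplus u\in\mathrm{MAX}$). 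It is the subsequent \eqref{point:preimm}$\Rightarrow$\eqref{point:wdivw} that genuinely uses \Cref{lemma:pitoZhat} and \Cref{fact:sbgrpsofzhat}, and \Cref{co:chartensorpair} is not needed anywhere in this cluster.
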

\begin{proof}
  The implication $\eqref{point:wdivw}\IMP\eqref{point:bbz}$ is proven by observing that $w$ is self-divisible if and only if for every $B\in w$ we have $D(w)\cap B\neq\emptyset$, and $\eqref{point:bbz}\IMP\eqref{point:bb}\IMP\eqref{point:AEbZ}\IMP\eqref{point:AEab}$ are immediate.
  
  We now prove $\eqref{point:AEab}\Rightarrow\eqref{point:wdivw}$. By assumption and  the transfer principle, for every $B\in w$ there exists $A\in w$ such that, for every $a, a'\in{}^{\ast}A$, there exists $b\in{}^{\ast}B$ dividing $\operatorname{gcd}(a, a')$. Fix two realisations $a, a'\models w$. Since for every $A\in w$ we have  $a, a'\in{}^{\ast}A$, by assumption for every $B\in w$ there exists $b\in{}^{\ast}B$ such that $b\mid\operatorname{gcd}(a, a')$. By compactness and saturation we can therefore find $b\models w$ such that $b\mid\operatorname{gcd}(a, a')$ and, by \Cref{main:thm}, this gives \eqref{point:wdivw}.
  
Also $\eqref{point:wdivw}\IFF\eqref{point:kwdf}$ follows from the fact that, for every $k\in\Z$, we have $(a, a')\models w\otimes w$ if and only if $(ka, ka')\models kw\otimes kw$. Observe also that, for every $k\in\Z\setminus\{0\}$, we have $kw\sequiv w w^{\oplus k}$. Since \eqref{point:kwdf} implies, by \Cref{co:chartensorpair},  that $kw\sequiv w 0$, we conclude that $\eqref{point:kwdf}\Rightarrow\eqref{point:wnwm}$ by transitivity of $\sequiv w$. 

We prove $\eqref{point:wnwm}\Rightarrow\eqref{point:wdivw}$.  Assume there are $(a, b)\models w\otimes w$ such that $a\nmid b$. By the transfer principle, there exist $p\in{}^{\ast}\mathbb{P}$ and $\alpha\in{}^{\ast}\N$ such that $p^\alpha\mid a$ but $p^\alpha\nmid b$. Notice that $p$ cannot be finite, since a power of a finite prime dividing $a$ must also divide $b$. But then $p^\alpha\nmid kb$ for every $k\in\Z$, and in particular $kw\nsequiv w 0$. Since by \eqref{point:wnwm} there exist $n<m$, such that $w^{\oplus n}\sequiv w w^{\oplus m}$, and as we already observed $kw\sequiv w w^{\oplus k}$, we conclude that $0\sequiv w w^{\oplus(m-n)}\sequiv w(m-n)w$, a contradiction.
  
  Taking $v=w$ yields $\eqref{point:vdivw}\Rightarrow\eqref{point:wdivw}$. Conversely, assuming \eqref{point:wdivw}, if $w\equiv_v 0$, by \Cref{co:chartensorpair} and self-divisibility of $w$ we obtain $w\sequiv v 0$, obtaining \eqref{point:vdivw}.
  
  To see $\eqref{point:wdivw}\IFF\eqref{point:dadw}$, notice that $D(w)=\{n\in\Z: n\mid a\}$, so $\{b\in{}^{\ast}\Z: b\mid a\}\subseteq{}^{\ast}D(w)$ if and only if $a\mid{}^{\ast}a$, which is the nonstandard characterisation of being self-divisible, see \Cref{rem:dfsmid}
  
  We now  prove that $\eqref{point:wdivw}\Rightarrow\eqref{point:deprem}\Rightarrow\eqref{point:preimm}\Rightarrow\eqref{point:cancmax}\Rightarrow\eqref{point:wdivw}$. The equivalence $\eqref{point:deprem}\IFF\eqref{point:preimm}$ is immediate from the definitions of $Z_w$ and $\pi$.  By \Cref{main:thm} and \Cref{fact:sequiveqrel}, if $w$ is self-divisible then $Z_w$ is closed under $\oplus$, and moreover $w\tmid u\IFF D(u)\in w$. Therefore, whether $w\tmid u$  only depends on the finite integers dividing $u$, and this yields $\eqref{point:wdivw}\Rightarrow\eqref{point:deprem}$.
  
  In order to prove $\eqref{point:preimm}\Rightarrow\eqref{point:cancmax}$, recall that, by \Cref{lemma:sdoop}, $\pi$ is an homomorphism and thus, for every $u, v, t\in\beta\Z$, we have  $\pi(u\oplus v\oplus t)=\pi(u)+\pi(v)+\pi(t)$. If $v\in\mathrm{MAX}$, then by \Cref{fact:maxsdiv} $\pi(v)$ is the null sequence, so  $\pi(u\oplus v\oplus t)=\pi(u)+\pi(t)=\pi(u\oplus t)$ and the conclusion follows.
  
In order to  prove that $\eqref{point:cancmax}\Rightarrow\eqref{point:wdivw}$, by \Cref{main:thm} it is enough to show that if \eqref{point:cancmax} holds then  $\equiv_w$ is transitive. Let $u\equiv_w v$ and $v\equiv_w t$, i.e.\ $u\ominus v, v\ominus t\in Z_w$. Since by assumption $Z_w$ is closed under $\oplus$, the ultrafilter $(u\ominus v)\oplus(v\ominus t)=u\oplus(-v\oplus v)\ominus t$ belongs to $Z_w$. But $-v\oplus v\in\mathrm{MAX}$, hence by assumption $u\ominus t\in Z_w$, or equivalently $u\equiv_w t$.

  The implication $\eqref{point:wdivw}\Rightarrow\eqref{point:kernelclosed}$ was proven in \Cref{rem:vvprime}. To prove $\eqref{point:kernelclosed}\Rightarrow\eqref{point:wdivw}$ assume that $\ker\sigma_w$ is closed. By the characterisation of closed subgroups of $\hat{\mathbb Z}$ (\Cref{fact:sbgrpsofzhat}), there is $D\subseteq \mathbb Z$ of the form $\bigcap \left(p^{\phi(p)+1} \mathbb Z\right)^\cc$ such that for all $u\in \beta \mathbb Z$ we have $\pi(u)\in \ker(\sigma_w)$ if and only if $D\subseteq D(u)$. By \Cref{pr:phidf} there is a (possibly principal) self-divisible $v$ such that $D=D(v)$. Since $v$ is self-divisible, by \Cref{main:thm}, for all $u\in \beta \mathbb Z$ we have $D(u)\in v\IFF v\smid u\IFF v\tmid u\IFF D(v)\subseteq D(u)\IFF D\subseteq D(u)\IFF w\smid u\IFF D(u)\in w$, hence $w$ and $v$ contain the same sets of the form $D(u)$. 
  Therefore, $w$ and $v$ contain the same $D(u)^\cc$, hence in particular the same $p^k\mathbb Z$, that is, $D(v)=D(w)$. But, since $v$ is self-divisible, $D(w)=D(v)\in v$, hence $D(w)\in w$.

Recall that, by  \Cref{rem:vvprime}, the topology induced by $\sigma_w$ coincides with the quotient topology (i.e.\ the one induced by $\rho_w$). Then, that  $\eqref{point:kernelclosed}\IMP\eqref{point:strqprocyclic}$ follows from the fact that quotients of procyclic groups by closed subgroups are procyclic (see also the characterisation in \Cref{footnote:procyclic}), and that $\eqref{point:strqprocyclic}\IMP\eqref{point:strqprofinite}$ is obvious. Moreover, if  $\beta \mathbb Z/\mathord{\sequiv w}$ is profinite with respect to some group topology, by \Cref{fact:autcont} the map $\sigma_w$ is automatically continuous, hence its kernel is closed, proving $\eqref{point:strqprofinite}\IMP\eqref{point:kernelclosed}$.

  Finally, $\eqref{point:explicitquotient}\IMP \eqref{point:strqprofinite}$ is clear, and  $\eqref{point:wdivw}\IMP\eqref{point:explicitquotient}$ is \Cref{co:addmax}.
\end{proof}
We take the opportunity to observe that the equivalence of~\ref{point:AEab} above with point \ref{point:mainthmgcd} of \Cref{main:thm} is a special case of~\cite[Theorem~5.23]{LMON}.
\section{Concluding remarks and an open problem}\label{sec:craaop}
Recall that, by \Cref{fact:equivrefl},  $\equiv_w$ is always reflexive. In \Cref{thm:transimpsym} we saw that, whenever $\equiv_w$ is transitive, then it is automatically symmetric. We were not able to determine whether the converse holds.
\begin{problem}\label{prob:symntrans}
  Are there ultrafilters $w\in \beta \mathbb Z \setminus \set 0$ such that $\equiv_w$ is symmetric, but not transitive?
\end{problem}
Our investigation of \v Sobot's congruence relations $\equiv_w$ and $\sequiv w$ led us to introduce self-divisible ultrafilters. The abundance of equivalent forms of self-divisibility (cf.~\Cref{main:thm,thm:charred}) seems to suggest that this and related notions should be investigated further. For instance, one may define $u/v$ as $\set{A:\set{n\in \mathbb Z: nA \in v}\in u}$, and observe that $w$ is self-divisible if and only if  $w/w$ is nonempty, if and only if it is an ultrafilter.
We leave it to future work to explore generalisations, for instance by replacing divisibility with other relations,  and applications to areas such as additive combinatorics or Ramsey theory.

\section*{Acknowledgements}
We thank the anonymous referee for their thorough feedback, that helped improve the paper.

M.~Di Nasso and R.~Mennuni are supported by the Italian research project  PRIN 2017: ``Mathematical Logic: models, sets, computability'' Prot.~2017NWTM8RPRIN and are members of the INdAM research group GNSAGA.

\end{document}